\newcommand{\Rx}{ {\mathbb{R}}_\infty}
\renewcommand{\Re}{\mathbb{R}}
\newcommand{\R}{\mathbb{R}}
\newcommand{\C}{\mathcal{C}}
\newcommand{\Y}{\mathcal{Y}}
\newcommand{\K}{\mathcal{K}}
\renewcommand{\P}{\mathbb{P}}
\renewcommand{\S}{\mathcal{S}}
\newcommand{\D}{\mathcal{D}}
\renewcommand{\epsilon}{\varepsilon}
\newcommand{\MoreauYosida}[2]{{\mathtt{e}}_{#2} #1}
\newcommand{\PMoreau}[3]{{\operatorname{\mathtt{e}}_{ {#2} }{ {#1}^{#3} } }}
\newcommand{\interior}{\,\operatorname{int}} 
\newcommand{\Prox}[2]{{\mathtt{Prox}}_{#2 #1}}
\newcommand{\dom}{\operatorname{dom}}
\newcommand{\G}[1]{G_{#1}}
\newcommand{\pmord}{\partial^{\tt b}}
\newcommand{\pfrech}{\partial^{\tt r}}
\newcommand{\norm}[1]{\left \Vert #1 \right \Vert}
\DeclareMathOperator{\Tr}{tr}
\newcommand{\media}{\mathtt{m}}
\renewcommand{\smartqed}{\flushright\qed}
\renewcommand{\Re}{\mathbb{R}}
\renewcommand{\norm}[1]{\left \Vert #1 \right \Vert}
\begin{document}

\title{Inner Moreau envelope of nonsmooth conic chance constrained optimization problems}
\titlerunning{Moreau envelope of nonsmooth chance constrained optimization problems}

\author{Wim van Ackooij \and Pedro P\'{e}rez-Aros \and Claudia Soto \and Emilio Vilches}

\institute{ W. van Ackooij \at
		EDF R\&D. OSIRIS
		7, Boulevard Gaspard Monge, F-91120 Palaiseau, France\\
		\email{wim.van-ackooij@edf.fr} 
		\and
		P. P\'erez-Aros \at
       Instituto de Ciencias de la Ingenier\'ia, Universidad de O'Higgins \\
        \email{pedro.perez@uoh.cl}      
     \and
      C. Soto \at
      Departamento de Ingenier\'ia Matem\'atica, 
      Universidad de Chile\\
       \email{csoto@dim.uchile.cl}
     \and 
      E.Vilches \at
      Instituto de Ciencias de la Ingenier\'ia,
      Universidad de O'Higgins \\
       \email{emilio.vilches@uoh.cl}
}
\maketitle

\begin{abstract}

Optimization problems with uncertainty in the constraints occur in many applications. Particularly, probability functions present a natural form to deal with this situation. Nevertheless, in some cases, the resulting probability functions are nonsmooth. This motivates us to propose a regularization employing the Moreau envelope of a scalar representation of the vector inequality. More precisely, we consider a probability function which covers most of the general classes of probabilistic constraints: $$\varphi(x)=\P(\Phi(x,\xi)\in -\K),$$ where $\K$ is a convex cone of a Banach space. The conic inclusion $\Phi (x,\xi) \in - \mathcal{K}$  represents an abstract system of  inequalities,  and $\xi$ is a random vector . We propose a regularization by applying the Moreau envelope to the scalarization of the function $\Phi$. In this paper, we demonstrate, under mild assumptions, the smoothness of such a regularization and that it satisfies a type of variational convergence to the original probability function. Consequently, when considering an appropriately structured problem involving   probabilistic constraints, we can thus entail the convergence of the minimizers of the regularized approximate problems to the minimizers of the original problem.  Finally, we illustrate our results with examples and applications in the field of (nonsmooth) joint, semidefinite  and probust chance constrained optimization problems.
 
\keywords{Moreau envelope\and subdifferential calculus\and supremum function\and infinite and semi-infinite programming\and stochastic programming.}
  
   \subclass{49J53\and 90C15\and 90C34\and 90C25}
\end{abstract}	

\section{Introduction}

Chance constrained optimization arises as an essential topic in stochastic optimization because it presents an intuitive tool to deal with uncertainty in objective functions or constraints being a middle point between expectation and robustness in the model. Nevertheless, the resulting class of problems might be nonsmooth and/or nonconvex. In any case, they are frequently stated to be very challenging from a practical point of view. 

\noindent This has led to many different approaches for handling probability functions. First, investigations regarding the convexity of upper level sets to probability functions have received quite some attention, starting with the classic works on log-concavity by Prékopa, e.g., \cite{Prekopa_1971} and leading up to recent contributions regarding ``eventual convexity", e.g., \cite{Henrion_Strugarek_2008,vanAckooij_Laguel_Malick_Matiussi-Ramalho_2022}. The topic of understanding differentiability of probability functions (or the kind of differentiability) has also received great attention. Here we can indicate, e.g., \cite{Marti_1995,Uryasev_1995} but also the recent contributions, e.g., \cite{Royset_Polak_2004,vanAckooij_Henrion_2014,Hantoute_Henrion_Perez-Aros_2017}. For a recent introductory text to the topic, we refer to \cite{vanAckooij_2020}. Dedicated algorithms for the handling of optimization problems under probability constraints have also received much attention. Generally they can be subdivided according to the nature of the random vector itself: continuous or discrete. Nonetheless, a popular strategy consists in replacing the continuous random vector with a discrete sample, thus going from the first to the second class. The second subdivision resides in the choice to either handle the probability function as itself, while the second consists of replacing it with an appropriate substitute. The first strategy requires explicit understanding of the nature of the underlying problem, e.g., convexity, differentiability and generally builds on non-linear programming approaches, e.g., \cite{Szantai_1988,Mayer_2000,Bremer_Henrion_Moller_2015,vanAckooij_Sagastizabal_2014,vanAckooij_Oliveira_2016}, all while exploiting the available knowledge. A popular strategy for the ``substitution'' path is replacing the probability function by a different mapping. The underlying idea is to write the probability function as an expectation and then to replace the indicator function by a parametrized approximation. These and other related ideas can be found in e.g., 
\cite{Hong_Yang_Zhang_2011,Nemirovski_Shapiro_2006a,Geletu_Hoffmann_Kloppel_Li_2017,Shan_Zhang_Xiao_2014,Pena-Ordieres_Luedtke_Wachter_2020}. One of the key difficulties in such an approach is to ensure proper convergence of the approximated solutions (value, solution vectors) to ``candidates" of the original problem formulation. Evidently when the ``parameters" reach their limiting value, the numerical properties of the approximations become ``degenerate" and hence a trade-off between ``precision'' and ``computability" has to be found. Likewise when the probability function acts on several inequalities (the so-called ``joint case"), this too can lead to a second step of approximations, e.g., employing a smooth approximation of the maximum function. The current work suggests something midway: a series of approximations based on the use of reliable and well developed Moreau envelope.

\noindent In convex analysis, the Moreau envelope (also called Moreau-Yosida regularization) is a a useful regularization for general nonsmooth convex functions. The applications of such an envelope covers a variety of theoretical developments, and it is at the core of many numerical optimization methods. Nowadays, there are plenty of explicit formulations for the computation of the Moreau envelope of most common convex functions, and there are efficient algorithms to compute the envelope numerically for more complex data (see, e.g., \cite{Combettes2017,MR3719240} and the references therein). 

\noindent This paper aims to propose and investigate a general regularization of probabilistic functions, which employs the Moreau envelope of some functions. Formally, in this work, we  consider a probability function $\varphi: \mathcal{H}\to [0,1]$ given by
\begin{equation}\label{Proba:funct}
	\varphi(x) := \P\left( \omega \in \Omega : \Phi(x,\xi(\omega)) \in -\K  \right),
\end{equation}
where $\mathcal{H}$ is a Hilbert space, $\xi : \Omega \to \R^m$ is an $m$-dimensional random vector, $\K\subset \Y$ is a (nonempty) convex cone of a Banach space $\Y$ and  $\Phi :\mathcal{H}\times  \mathbb{R}^m \to \Y$ is a function. Here, it is worth mentioning that the formulation of the probability function $\varphi$ in \eqref{Proba:funct} covers several of the most general classes of probabilistic constraints arising in chance constrained, joint-chance constrained, and even probabistic/robust (probust) chance constrained optimization problems, as we will show in this work. Here, it is important to mention that the inclusion can be represented as an abstract inequality given by the cone order $x \preceq y$ if and only if $ y -x\in \mathcal{K}$. A particular example covered is one wherein $\K$ is the cone of positive definite matrices, and thus $\Phi(x,\xi)\in \mathcal{K}$ represents that our (random) decision matrix $ \Phi(x,\xi)$  should be positive semidefinite for most possible cases (see Section \ref{sec:examples} for more details on such an application).

 Since the random (possibly infinite dimensional) constraint $\Phi(x,\xi(\omega)) \in -\K$ is challenging to handle, we propose a Moreau regularization of a (nonsmooth) scalarization of the function $\Phi$. Then our regularization will be given by the probability function generated by the Moreau envelope of that regularization (see Section \ref{innerScalarization} for more details).  
 Surprisingly, and under mild assumptions, such regularization inherits variational properties of the Moreau envelope, for instance, its smoothness and variational convergence to the original function. Those properties are used to provide a regularization of (general) chance constrained optimization problems and the convergence of the minimizers of the regularized problems to the minimizers of the original formulation. It is natural to understand such convergence as a naive form to propose a toolbox for solving general classes of nonsmooth chance constraints optimization problems. Consequently, our developments open a gate to study further improvements using the ideas exploited in deterministic optimization algorithms, which use Moreau envelops of functions in a future research project.  

This paper is organised as follows: section \ref{sec:preliminary} provides background information regarding notation, frequently used results and suggests the setting of the work. Section \ref{sec:varcvg} examines the convergence of the inner Moreau envelope of the probability function towards the nominal probability function \eqref{Proba:funct}. Differentiability of the approximating function is investigated in section \ref{sec:diff}. The manner in which the use of approximated probability functions, through their inner Moreau envelope, allow us to approximate a given optimization problem is investigated in section \ref{sec:consistency}. Section \ref{sec:examples}  provides several examples and possible applications of the developed results. Finally, the paper ends with some conclusions and perspectives for future research projects.  

\section{Preliminaries}
\label{sec:preliminary}

In this section, after having introduced notation and base concepts used throughout the manuscript, we introduce formally the Moreau envelope  of given mappings. We also provide elementary results and properties frequently used of this envelope. The subsequent section introduces details about the random vectors themselves, as well as the possibility of representing a probability function, through a spherical-radial decomposition. The latter allows us, under certain structural assumptions, to state known results regarding differentiability of certain probability functions. Finally, this section is dedicated to the introduction of the inner Moreau envelope of probability functions of the form \eqref{Proba:funct}. The resulting object will be investigated in the remainder of the paper.

\subsection{Notation}
Let $(\mathcal{H},\langle \cdot,\cdot\rangle)$ be a separable Hilbert space with unit ball $\mathbb{B}$ and $\Y$ will be a Banach space. Given a set $C$ and a topology $\tau$, we denote by $\operatorname{cl}^{\tau} C$, $\interior_{\tau}{C}$, the closure, the interior of $C$ with respect to $\tau$. When there is no confusion, we omit the symbol $\tau$. The cone generated by $C$ is denoted by $\operatorname{cone}C$. For a given cone $\mathcal{K}\subset \Y$, we denote its positive and negative dual cone by
\begin{align*}
   \mathcal{K}^{+}&:=\{ v^\ast \in \Y^\ast: \langle v^\ast, v\rangle \geq 0 \text{ for all }v\in \K \},\\
   \mathcal{K}^{-}&:=\{ v^\ast \in \Y^\ast: \langle v^\ast, v\rangle \leq 0 \text{ for all }v\in \K \},
\end{align*}
respectively.
We set $\mathbb{B}_r(x)$ the ball with center at $x$ and with radius $r$. The space $\mathcal{H}\times \mathbb{R}^m$ is considered as a Hilbert space with the appropriate inner product. 

\noindent The indicator function of a set $C\subset \mathcal{H}$ is the function $\delta_C\colon \mathcal{H}\to \mathbb{R}\cup\{+\infty\}$ defined by $\delta_C(x)=0$ for $x\in C$ and $\delta_C(x)=+\infty$ otherwise.

\noindent The epigraph and the (effective) domain of an extended real valued function $\psi:\mathcal{H}\to\Rx$, where $\Rx:=\mathbb{R}\cup\{+\infty\}$,  are defined and denoted respectively by $\operatorname{epi}\psi:=\{(x,\lambda)\in \mathcal{H}\times \mathbb{R}\colon \psi(x)\leq \lambda\}$ and
$\dom \psi:=\{x\in\mathcal{H}\colon \psi(x)<+\infty\}$. 

\noindent The convex subdifferential of an extended real valued function $\psi:\mathcal{H}\to\Rx$ is defined and denoted by 
$\partial \psi(x)=\{z\in \mathcal{H}: \psi(y)\geq \psi(x)+\langle z, y-x\rangle \textrm{ for all } y\in\mathcal{H}\}$ when $x\in \operatorname{dom}\psi$. We set  $\partial \psi(x)=\emptyset$ when $\psi(x)=+\infty$. The Legendre-Fenchel conjugate of $\psi$ is the extended real valued function $\psi^*:\mathcal{H}\to\Rx$ defined by $\psi^*(z)=\sup_{y\in \mathcal{H}}\{\langle z,y\rangle-\psi(y)\}.$ The set of all convex, proper, and lower-semicontinuous (lsc) functions is denoted by $\Gamma_0(\mathcal{H})$.

\noindent A function $\psi\colon \mathcal{H}\to \Rx$ is coercive if the sets $\{x\in\mathcal{H}\colon \psi(x)\leq \alpha\}$ are bounded for all $\alpha\in\R$. Moreover, for $\psi \in \Gamma_0(\mathcal{H})$, the above is equivalent to the condition $0\in\text{int}(\dom \psi^*)$ (see, e.g., \cite[Proposition 14.16]{Combettes2017}). We say that a function $\psi$ has a strong minimum at $x_0$, if   $\psi(x) > \psi(x_0)$ for all $x \neq x_0$, and $x_k \to x_0$ whenever $\psi(x_k) \to \psi(x_0)$.

\noindent A sequence of sets $(C_k)_k\subset \mathcal{H}$ \emph{Painlev\'e-Kuratowski} converges to a set $C$ if the following conditions hold:
\begin{enumerate}
     \item[$a)$] $ C\subset \liminf\limits_{ k\to\infty}C_k :=\left\{ x\in \mathcal{H}\colon \exists x_k \in C_k \text{ with } x_k \to x  \right\}$, and
 		\item[$b)$] $\limsup\limits_{k\to\infty}C_k:=\left\{ x \in \mathcal{H}\colon \exists x_{n} \in C_{k_n} \text{ with } k_n \to \infty \text{ and } x_{n} \rightarrow x \right\} \subset C$.
 	\end{enumerate}
The sequence $(C_k)_k$ is said to \emph{Mosco converge} to $C$ if condition $a)$ is satisfied and $b)$ is replaced by the following condition:
\begin{enumerate}
 		\item[$c)$] $w$-$\limsup\limits_{k\to\infty}C_k:=\left\{ x \in \mathcal{H}: \exists x_{n} \in C_{k_n} \text{ with } k_n \to \infty \text{ and } x_{n} \rightharpoonup x \right\} \subset C$.
\end{enumerate}
Moreover, the limit set of a sequence of epigraphs is again an epigraph (in both of the above notions). Thus, we obtain two notions of convergence of functions which can be characterized as follows: A sequence of functions $\psi_k:\mathcal{H}\to\Rx$ \emph{epi-converge} to $\psi:\mathcal{H}\to \Rx$ when the following two conditions hold:
\begin{enumerate}
   \item[$a')$] For all $x\in \mathcal{H}$, there exist $x_k\to x$ such that $\limsup\limits_{ k \to \infty }  \psi_k(x_k)\leq \psi(x)$, and
   \item[$b')$] For all $x\in \mathcal{H}$ and for all $x_k\to x$, we have $\liminf\limits_{ k \to \infty }  \psi_k(x_k)\geq \psi(x)$.
\end{enumerate} 
The sequence $\psi_k$ is said to \emph{Mosco epi-converge} to $\psi$ when condition $a')$ is satisfied and $b')$ is replaced by the following condition:
\begin{enumerate}
   \item[$c')$] For all $x\in \mathcal{H}$ and for all $x_k\rightharpoonup x$, we have $\liminf\limits_{ k \to \infty }  \psi_k(x_k)\geq \psi(x)$.
\end{enumerate} 
Hypo-convergences notions can be obtained by applying the above notions to the functions $-\psi,-\psi_k$. Moreover, a sequence of functions $(\psi_k)$ \emph{converges continuously} to $\psi$ if $(\psi_k)_k$ epi-converges and hypo-converges to $\psi$, i.e., for all $x_k\rightarrow x$ we have that $\lim_{k\rightarrow \infty}\psi_k(x_k)=\psi(x)$.

\subsection{Moreau envelope}
Given a function $\psi \in  \Gamma_0(\mathcal{H})$ and $\lambda >0$, the Moreau envelope of $\psi$ of parameter $\lambda $ is the function $\MoreauYosida{\psi}{\lambda}: \mathcal{H}\to \Rx$ defined by
\begin{equation*}
   \MoreauYosida{\psi}{\lambda}(x) := \inf_{ z\in \mathcal{H}} \left( \psi(z) + \frac{1}{2\lambda}  \| x-z  \|^2 \right).
\end{equation*}
The above infimum is attained at a unique point, which is called the  proximal point of $\psi$ of index $\lambda$ at $x$. It defines a nonexpansive operator $ \Prox{\psi}{\lambda } :\mathcal{H}\to\mathcal{H}$ given by 
\begin{align*}
\Prox{\psi}{\lambda }(x) &:=\operatorname{argmin}_{ z\in  \mathcal{H}}  \left( \psi(z) + \frac{1}{2\lambda}  \| x-z  \|^2 \right) 
= \left( I + \lambda \partial \psi \right)^{-1}(x)
\end{align*} 
Moreover, the Moreau envelope of any proper lower-semicontinuous function is convex and continuously differentiable with 
\begin{equation}
\label{eq:derivativeofMoreau}
\nabla \MoreauYosida{\psi}{\lambda}(x)=\frac{1}{\lambda}(x-\Prox{\psi}{\lambda}(x)) \textrm{ for all } x\in \mathcal{H}.
\end{equation}
It follows moreover from the above identification of the proximal operator with a resolvant that (see, e.g., \cite[Proposition 16.44]{Combettes2017}):
\begin{equation}
\label{eq:nablaprox}
\nabla \MoreauYosida{\psi}{\lambda}(x) \in \partial \psi( \Prox{\psi}{\lambda }(x) ).
\end{equation}

The following proposition summarizes some properties of the Moreau envelope in Hilbert spaces. We refer to \cite{Attouch2014,Combettes2017,MR123071} for more details.

\begin{proposition}\label{basicprop} Let $g\colon \mathcal{H}\to \R$ be a convex and lower semicontinuous function. Then the following hold.
	\begin{enumerate}[label=\alph*)]
		\item Monotone convergence: $  \MoreauYosida{g}{\lambda} (x) \nearrow g(x) $ as $\lambda \searrow \ 0$ for all $x\in \mathcal{H}$. 
		\item Convergence of resolvents: $\Prox{g}{\lambda}(x)\to x$ as $\lambda\to 0$ for all $x\in \mathcal{H}$.
		\item Lower epi-convergence: If $x_k \rightharpoonup x $ and $\lambda_k \searrow 0$, then 
		\begin{equation*}
		  g(x) \leq \liminf\limits_{ k \to \infty }  \MoreauYosida{g}{\lambda_k} (x_k).
		\end{equation*}
			\item Continuous convergence: If $x_k \rightarrow x $ and $\lambda_k \searrow 0$, then \begin{equation*}
			  g(x) =\lim_{ k \to \infty }  \MoreauYosida{g}{\lambda_k} (x_k).
			\end{equation*}
	\end{enumerate}
\end{proposition}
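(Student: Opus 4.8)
The plan is to prove the four assertions in the order b), a), c), d), since the later parts reuse the estimate established for b). The common engine is a single coercivity-type bound on the proximal point coming from an affine minorant of $g$: since $g$ is convex, proper and lsc, there exist $u\in\mathcal{H}$ and $c\in\R$ with $g(z)\ge \langle u,z\rangle-c$ for all $z\in\mathcal{H}$. To prove b), write $p_\lambda:=\Prox{g}{\lambda}(x)$; taking $z=x$ in the infimum gives $\MoreauYosida{g}{\lambda}(x)\le g(x)$, whence
\[
\tfrac{1}{2\lambda}\norm{x-p_\lambda}^2 \le g(x)-g(p_\lambda).
\]
Substituting the affine minorant for $g(p_\lambda)$ and setting $t=\norm{x-p_\lambda}$ produces a quadratic inequality $t^2\le 2\lambda M + 2\lambda\norm{u}\,t$, with $M=g(x)-\langle u,x\rangle+c$ independent of $\lambda$; solving it yields $t\le \lambda\norm{u}+\sqrt{\lambda^2\norm{u}^2+2\lambda M}\to 0$ as $\lambda\to 0$, so $p_\lambda\to x$.

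For a), I would observe that for each fixed $z$ the quantity $g(z)+\tfrac{1}{2\lambda}\norm{x-z}^2$ increases as $\lambda\searrow 0$, so the infima $\MoreauYosida{g}{\lambda}(x)$ form a nondecreasing net bounded above by $g(x)$; call its limit $L\le g(x)$. Because the penalty term is nonnegative we have $g(p_\lambda)\le \MoreauYosida{g}{\lambda}(x)$, and combining this with $p_\lambda\to x$ from b) and the lower semicontinuity of $g$ gives $g(x)\le\liminf_{\lambda\to0} g(p_\lambda)\le L\le g(x)$, forcing $L=g(x)$.

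The substantive step is c), where $x_k\rightharpoonup x$ only weakly and $\lambda_k\searrow 0$. Here the naive bound $\MoreauYosida{g}{\lambda_k}(x_k)\le g(x_k)$ is useless, since $g$ need not be bounded along a merely weakly convergent (hence bounded but not relatively compact) sequence. Instead I would repeat the quadratic-inequality argument of b) with $x$ replaced by $x_k$, passing to a subsequence realizing the $\liminf$ and bounding $\MoreauYosida{g}{\lambda_k}(x_k)$ by a finite constant $C$ there; using the affine minorant and the boundedness of $(x_k)$ (automatic for weakly convergent sequences) gives $\norm{x_k-p_k}\to 0$, where $p_k:=\Prox{g}{\lambda_k}(x_k)$. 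Consequently $p_k\rightharpoonup x$, and the weak lower semicontinuity of the convex lsc function $g$, together with $g(p_k)\le \MoreauYosida{g}{\lambda_k}(x_k)$, yields $g(x)\le\liminf_k g(p_k)\le\liminf_k\MoreauYosida{g}{\lambda_k}(x_k)$. This infinite-dimensional control of $\norm{x_k-p_k}$ without boundedness of $g$ is the main obstacle.

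Finally d) follows by combining two inequalities. The lower estimate $g(x)\le\liminf_k\MoreauYosida{g}{\lambda_k}(x_k)$ is exactly c), since strong convergence implies weak convergence. The upper estimate comes from $\MoreauYosida{g}{\lambda_k}(x_k)\le g(x_k)$ (taking $z=x_k$) together with the continuity of $g$ (a finite-valued convex lsc function on a Hilbert space is continuous), which gives $\limsup_k\MoreauYosida{g}{\lambda_k}(x_k)\le\lim_k g(x_k)=g(x)$; the two bounds deliver the claimed equality.
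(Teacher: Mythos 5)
Your proposal is correct, and it differs from the paper's treatment in two respects. First, the paper does not actually prove items a)--c): it cites them from an external reference and only writes out a proof of d). You instead prove all four items from scratch, with the affine-minorant/quadratic-inequality estimate $\norm{x-p_\lambda}^2 \leq 2\lambda M + 2\lambda\norm{u}\,\norm{x-p_\lambda}$ on the proximal point serving as the single engine; this is the classical self-contained route (in the spirit of Attouch's epi-convergence toolkit) and it is what makes your proofs of b) and, crucially, c) work without any compactness or boundedness of $g$ along the sequence. Second, for d) the two arguments genuinely diverge: the paper transfers proximal convergence from the fixed point to the moving sequence via nonexpansiveness, $\Vert \Prox{g}{\lambda_k}(x_k)-x\Vert \leq \Vert x_k - x\Vert + \Vert \Prox{g}{\lambda_k}(x)-x\Vert \to 0$ using b), and then squeezes $\MoreauYosida{g}{\lambda_k}(x_k)$ between $g(\Prox{g}{\lambda_k}(x_k))$ and $g(x_k)$ using continuity of $g$ on both sides; you instead obtain the lower bound by specializing c) to strongly convergent sequences and only use the sandwich $\MoreauYosida{g}{\lambda_k}(x_k)\leq g(x_k)$ plus continuity for the upper bound. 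Your route reuses machinery you have already built (no appeal to nonexpansiveness at all), while the paper's is a cleaner shortcut if a)--c) are taken as given. One small point worth making explicit in c): you should remark that the case $\liminf_k \MoreauYosida{g}{\lambda_k}(x_k) = -\infty$ is impossible (or is handled automatically), since your argument then still yields $\norm{x_k - p_k}\to 0$, $p_k \rightharpoonup x$, and hence the contradiction $g(x) \leq -\infty$ with $g$ finite-valued; as written, "bounding by a finite constant $C$" silently presumes the liminf is not $-\infty$, and this one-line observation closes that gap.
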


\begin{proof}
Items a-c) can be found in \cite[Proposition~2.2]{Perez-Vilches2021}. Let us focus on $d)$. To this end, let a sequence $x_k\to x$ and $\lambda_k \searrow 0$ be given. Then, by virtue of the proximal operator being non-expansive we have
\begin{equation}\label{prox-n}
\begin{aligned}
\Vert \Prox{g}{\lambda_k}(x_k)-x\Vert &\leq \Vert \Prox{g}{\lambda_k}(x_k)-\Prox{g}{\lambda_k}(x)\Vert +\Vert \Prox{g}{\lambda_k}(x)-x\Vert\\
& \leq \Vert x_k-x\Vert +\Vert \Prox{g}{\lambda_k}(x)-x\Vert,
 \end{aligned}
\end{equation}
which, by b), implies that $\Prox{g}{\lambda_k}(x_k)\to x$, as $k\to +\infty$. Moreover, for all $k\in \mathbb{N}$
\begin{align*}
g(\Prox{g}{\lambda_k}(x_k))&\leq g(\Prox{g}{\lambda_k}(x_k))+\frac{1}{2\lambda_k}\Vert x_k-\Prox{g}{\lambda_k}(x_k)\Vert^2\\
&= \MoreauYosida{g}{\lambda_k} (x_k)\leq g(x_k),
\end{align*}
where a) was used to derive the last inequality.
Thus, by using \eqref{prox-n}, the continuity of $g$ ($g$ is lower semicontinuous with finite values) and taking the limit $k\to +\infty$ in the latter inequality, we obtain that
$\lim_{k\to +\infty}\MoreauYosida{g}{\lambda_k} (x_k)=g(x).
$\smartqed
\end{proof}

The final proposition in this section gives a precise (uniform) bound on the distance between a function and its Moreau envelope in finite dimensional setting.

\begin{proposition}\label{Uniform:aprox}
Let $S$ be a closed, convex and bounded subset of $\mathcal{H}=\mathbb{R}^n$ and let $g\colon \mathcal{H}\to \R$ be a convex  function. Then, there exist $\ell\geq 0$, $\lambda_0\in (0,1)$ and a constant $C>0$ such that for all $\lambda\in (0,\lambda_0)$, the function $x\mapsto \MoreauYosida{g}{\lambda}(x)$ is $\ell$-Lipschitz on $S$ and 
\begin{equation*}
	\sup_{x\in S} | \MoreauYosida{g}{\lambda}(x) -g(x)| \leq  \ell \sqrt{\lambda}C.
\end{equation*}
\end{proposition}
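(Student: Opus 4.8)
The plan is to reduce everything to two ingredients: the elementary estimate obtained by testing the infimum defining $\MoreauYosida{g}{\lambda}$ at the point $x$ itself, together with the fact that a finite convex function on $\R^n$ is Lipschitz, with uniformly bounded subgradients, on bounded sets. First I would fix a compact convex enlargement $S':=S+\overline{\mathbb{B}}$ of $S$ (recall $S$ is compact, being closed and bounded in $\R^n$). Since $g$ is convex and finite on $\R^n$, it is Lipschitz on a compact neighbourhood of $S'$; I denote by $\ell$ such a Lipschitz constant and by $\Delta:=\sup_{S'}g-\inf_{S'}g<\infty$ the oscillation of $g$ on $S'$. I would also set $L:=\sup\{\norm{s}:s\in\partial g(x),\ x\in S\}$, which is finite by local boundedness of the subdifferential of a finite convex function.

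The crucial preliminary step, and the one I expect to be the main obstacle, is to \emph{localize} the proximal point: I must guarantee that $p_\lambda(x):=\Prox{g}{\lambda}(x)$ stays inside the fixed compact set $S'$ for every $x\in S$ and every small $\lambda$, so that $\ell$ and $\Delta$ may legitimately be invoked. To this end I would test the definition of the envelope at $z=x$, which gives
\begin{equation*}
\frac{1}{2\lambda}\norm{x-p_\lambda(x)}^2\leq g(x)-g(p_\lambda(x)).
\end{equation*}
Bounding the right-hand side by $\langle s,x-p_\lambda(x)\rangle\leq L\norm{x-p_\lambda(x)}$ for any $s\in\partial g(x)$ yields $\norm{x-p_\lambda(x)}\leq 2\lambda L$; hence choosing $\lambda_0\in(0,1)$ with $2\lambda_0 L\leq 1$ forces $p_\lambda(x)\in S'$ for all $\lambda\in(0,\lambda_0)$ and all $x\in S$.

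With the proximal point confined to $S'$, the Lipschitz claim follows from \eqref{eq:nablaprox}: since $\nabla\MoreauYosida{g}{\lambda}(x)\in\partial g(p_\lambda(x))$ and $p_\lambda(x)$ lies in the region where subgradients of $g$ are bounded by $\ell$, one obtains $\norm{\nabla\MoreauYosida{g}{\lambda}(x)}\leq\ell$ for every $x\in S$. As $S$ is convex, this pointwise gradient bound upgrades to $\ell$-Lipschitz continuity of $\MoreauYosida{g}{\lambda}$ on $S$ (integrate the gradient along the segment joining two points of $S$).

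Finally, for the uniform gap I would combine $0\leq g(x)-\MoreauYosida{g}{\lambda}(x)$, granted by Proposition~\ref{basicprop}~a), with
\begin{equation*}
g(x)-\MoreauYosida{g}{\lambda}(x)=\big(g(x)-g(p_\lambda(x))\big)-\frac{1}{2\lambda}\norm{x-p_\lambda(x)}^2\leq \ell\,\norm{x-p_\lambda(x)}.
\end{equation*}
The point is that the first displayed inequality also gives $\norm{x-p_\lambda(x)}^2\leq 2\lambda\big(g(x)-g(p_\lambda(x))\big)\leq 2\lambda\Delta$, whence $\norm{x-p_\lambda(x)}\leq\sqrt{2\Delta}\,\sqrt{\lambda}$. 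Chaining the two estimates produces $\sup_{x\in S}|\MoreauYosida{g}{\lambda}(x)-g(x)|\leq \ell\sqrt{\lambda}\,C$ with $C:=\sqrt{2\Delta}$, which is exactly the asserted bound. I would remark that the $\sqrt{\lambda}$ rate is deliberately crude—using $g(x)-g(p_\lambda(x))\leq\ell\norm{x-p_\lambda(x)}$ in place of the oscillation bound in fact yields the sharper $O(\lambda)$ estimate—but the stated form is all that the subsequent variational-convergence arguments require.
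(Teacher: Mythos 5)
Your proof is correct, and its overall skeleton matches the paper's: fix a compact convex enlargement of $S$ (your $S'=S+\overline{\mathbb{B}}$ is precisely the paper's $\tilde{S}=\{x : d_S(x)\leq 1\}$), show that $\Prox{g}{\lambda}(x)$ lands in it for all $x\in S$ and small $\lambda$, and then use the Lipschitz constant $\ell$ of $g$ on that enlargement twice — once to bound $\nabla \MoreauYosida{g}{\lambda}(x)\in\partial g(\Prox{g}{\lambda}(x))$ and deduce $\ell$-Lipschitzness on $S$, and once to bound the gap via $g(x)-\MoreauYosida{g}{\lambda}(x)\leq \ell\,\Vert x-\Prox{g}{\lambda}(x)\Vert$. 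Where you genuinely diverge is the localization mechanism, which you rightly identify as the crux. The paper pairs the prox inequality with an arbitrary affine minorant $\langle x^{\ast},\cdot\rangle+\beta$ of $g$ and Young's inequality, which yields only $\Vert x-\Prox{g}{\lambda}(x)\Vert\leq \sqrt{\lambda}\,C$ uniformly on $S$; you instead pair it with a subgradient of $g$ at the point $x$ itself, whose norm is uniformly bounded by $L$ over the compact set $S$, obtaining the stronger linear bound $\Vert x-p_\lambda(x)\Vert\leq 2\lambda L$, and then recover the stated $\sqrt{\lambda}$ form from the oscillation $\Delta$ of $g$ on $S'$. Your route is sharper — as you remark, it in fact gives an $O(\lambda)$ gap estimate, consistent with the classical bound $g-\MoreauYosida{g}{\lambda}\leq \tfrac{\lambda}{2}\ell^2$ for Lipschitz convex functions — while the paper's affine-minorant device is the more robust one: it needs only the existence of an affine minorant and boundedness of $g$ on $S$, ingredients that survive for proper lsc convex functions in settings where pointwise subgradient bounds are less immediate. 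In the finite-dimensional frame of the proposition both are equally legitimate, so this is a stylistic trade-off rather than a gap.
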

\begin{proof} Let us consider the set $\tilde{S}:=\{x\in \mathcal{H}\colon d_S(x)\leq 1\}$. Since $\tilde{S}$ is closed, convex and bounded, there exists $\ell\geq 0$ such that $g$ is $\ell$-Lipschitz on $\tilde{S}$. Moreover, for all $x\in S$
\begin{equation*}
\begin{aligned}
\frac{1}{2\lambda}\Vert x-\Prox{g}{\lambda}(x)\Vert^2
\leq& g(x)-g(\Prox{g}{\lambda}(x))\\
\leq& g(x)-\langle x^{\ast},\Prox{g}{\lambda}(x)\rangle -\beta\\
\leq& g(x)+\Vert x^{\ast}\Vert \cdot \Vert x-\Prox{g}{\lambda}(x)\Vert+\Vert x^{\ast}\Vert\cdot \Vert x\Vert-\beta\\
\leq& g(x)+\lambda \Vert x^{\ast}\Vert^2+\frac{1}{4\lambda}\Vert x-\Prox{g}{\lambda}(x)\Vert^2\\
&+\Vert x^{\ast}\Vert\cdot\Vert x\Vert-\beta,
\end{aligned}
\end{equation*}
where $x\mapsto \langle x^{\ast},x\rangle +\beta$ is an arbitrary but fixed affine minorant of $g$. We have also used the inequality $ab \leq \frac{c^2}{2} a^2 + \frac{1}{2c^2}b^2$ for $c=\sqrt{2\lambda}$. Thus, for all $x\in S$, we have
\begin{equation*}
\begin{aligned}
\Vert x-\Prox{g}{\lambda}(x)\Vert^2&\leq 4\lambda\left( g(x)+\lambda \Vert x^{\ast}\Vert^2+\Vert x^{\ast}\Vert\cdot\Vert x\Vert-\beta\right).
\end{aligned}
\end{equation*}
Since the right-hand side of the latter inequality is uniformly bounded in $S$, $\lambda \leq 1$, it is possible to find a constant $C>0$ such that
\begin{equation}\label{eq-C}
\Vert x-\Prox{g}{\lambda}(x)\Vert \leq \sqrt{\lambda} C.
\end{equation}

In particular, it is possible to find $\lambda_0\in (0,1)$ such that for all $\lambda \in (0,\lambda_0)$
$$
\Vert x-\Prox{g}{\lambda}(x)\Vert\leq 1 \textrm{ for all } x\in S,
$$
implying that $\Prox{g}{\lambda}(x)\in \tilde{S}$. Hence, for all $\lambda\in (0,\lambda_0)$ and $x\in S$
\begin{align*}
0\leq g(x)-\MoreauYosida{g}{\lambda}(x)&=g(x)-g(\Prox{g}{\lambda}(x))-\frac{1}{2\lambda}\Vert x-\Prox{g}{\lambda}(x)\Vert^2\\
&\leq \ell \Vert x-\Prox{g}{\lambda}(x)\Vert \leq \ell \sqrt{\lambda}C,
\end{align*}
where we have used \eqref{eq-C} and the fact that $\Prox{g}{\lambda}(x)\in \tilde{S}$ for all $x\in S$ when $\lambda \in (0,\lambda_0)$. Finally, since $\MoreauYosida{g}{\lambda}$ is convex and differentiable, for all $x\in S$
$$
\MoreauYosida{g}{\lambda}(y)\geq \MoreauYosida{g}{\lambda}(x)+\langle \nabla \MoreauYosida{g}{\lambda}(x),y-x\rangle \textrm{ for all } y\in \mathcal{H},
$$
where $\nabla \MoreauYosida{g}{\lambda}(x)\in \partial g(\Prox{g}{\lambda}(x))$. Hence, since $\Prox{g}{\lambda}(x)\in \tilde{S}$ and $g$ is $\ell$-Lipschitz on $\tilde{S}$, it follows that $\Vert \nabla \MoreauYosida{g}{\lambda}(x)\Vert \leq \ell$ for all $x\in S$. Therefore, for all $x,y\in S$
$$
\MoreauYosida{g}{\lambda}(x)\leq \MoreauYosida{g}{\lambda}(y)+\ell \Vert y-x\Vert, 
$$
which ends the proof, by showing that $x\mapsto \MoreauYosida{g}{\lambda}(x)$ is $\ell$-Lipschitz on $S$.
\smartqed
\end{proof}

\subsection{Spherical radial decomposition and gradient formula for probability functions} \label{subsec:spherical_descomposition_gradient_formula}

Let $(\Omega,\mathcal{F},\mathbb{P})$ be a probability space. In what follows, $\xi : \Omega  \to \mathbb{R}^m$ is a $m$-dimensional random vector admitting a (continuous) density with respect to the Lebesgue measure, which is denoted by $f_{\xi}$. 

 Consider a continuously differentiable function $g :  \mathcal{H}
	\times \R^m \to \R$, which is convex with respect to its second variable. Let us consider the probability function
\begin{equation}\label{probfunc:g}
	\varphi(x):=\mathbb{P}( \omega \in \Omega : g(x,\xi(\omega))
	\leq 0 ).
\end{equation}
The spherical-radial decomposition \cite{Hantoute_Henrion_Perez-Aros_2017,vanAckooij_Malick_2017,Fang_Kotz_Ng_1990} allows to rewrite the above probability as
\begin{equation*}
	\varphi(x)= \int\limits_{v \in \mathbb{S}^{m-1}} e(x,v) d
	\mu_\zeta(v),
\end{equation*}
where $e: \mathcal{H}\times \mathbb{S}^{m-1} \to \Rx$ is the \emph{radial probability-like} function given
by
\begin{align*}
	e(x,v) =  \frac{2 \pi^{\frac{m}{2} } |\det(L)|}{ \Gamma(
		\frac{m}{2} )   } \displaystyle  \int\limits_{  \{ r\geq 0 :
		g(x, rLv)   \leq 0   \}   }  r^{m-1}f_\xi (rLv)
	dr.
\end{align*}
Whenever $x \in \mathcal{H}$ is such that $\varphi(x) \geq \frac 12$, e.g., \cite[Corollary 2.1]{vanAckooij_Malick_2017}, and $\xi$ attributes at least half probability to any half space containing $\media := \mathbb{E}(\xi)$, then it must follow that $g(x,\mu) < 0$ when $g$ itself admits a Slater-point or is qualified, i.e.,
$$\interior\{z \in \Re^m \; : g(x,z) \leq 0\} = \{z \in \Re^m \; : g(x,z) < 0\}.$$ 
The last condition is in any case needed in order to ensure continuity of the probability function, e.g., \cite[eq. (3)]{Farshbaf-Shaker_Henrion_Homberg_2017}. In the sequel we will assume that $\media := \mathbb{E}(\xi) = 0$ and that moreover it is true that $g(x,\media) < 0$. Then convexity in the second argument of $g$, ensures (see, e.g., \cite{vanAckooij_Henrion_2014}) that 
\begin{align*}
\{r \geq 0 \; : g(x, rLv) \leq 0 \} = [0,\rho(x,v)],\;  \text{where } \rho(x,v) :=\sup\{ r :g(x, rLv) \leq 0\},  
\end{align*}  
where $\rho(x,v) = \infty$ is allowed,  with the convention $[0,+\infty]= [0,+\infty)$.

\noindent In order to simplify the notation, let us define the
\emph{density-like function} $\theta$:
\begin{align}
\label{transformationftorho}
	\theta(r,v):=  \frac{2\pi^{\frac{m}{2}} |\textnormal{det}(L)|
	}{\Gamma(\frac{m}{2})} r^{m-1}f_\xi (rLv).
\end{align}
We will associate the
finite and infinite directions with respect to $g$ as
the sets defined by
\begin{align*}
	F(x) &:=\{v\in \mathbb{S}^{m-1}\ :\ \exists r\geq 0:g(x,rLv)=0\}, \text{ and } 	I(x) :=\mathbb{S}^{m-1} \backslash 	F(x)
\end{align*}%
respectively. We can observe that $F(x) = \dom(\rho(x,.))$, with $\rho(x,.)$ as introduced earlier.

The following technical condition is used to obtain formulae for the gradient of probability functions (see \cite{vanAckooij_Perez-Aros_2022}). It is worth to emphasize that the condition below is general enough to cover most of the known distributions. For example, for Gaussian distributions it holds under an exponential growth condition on the gradients $\nabla_x g$. 

\begin{definition}[$\eta_{\theta}$-growth condition]
Consider $ \bar{x} \in \mathcal{H}$ and $\bar{v} \in
	I( \bar{x})$. And let a mapping $\eta_{\theta} : \R\times \mathbb{S}^{m-1}
	\to [0,+\infty]$ be such that
	\begin{equation}\label{thetagrowth}
		\lim\limits_{\substack{r \to +\infty \\
				v \to \bar{v} } } r {\theta} (r,v) \eta_{\theta}(r,v) =0.
	\end{equation}
	We say that the mapping $ g$ satisfies
	the $\eta_{\theta}$-growth condition at $(\bar{x}, \bar{v})$ if
	for some $\epsilon, l >0$
	\begin{equation*}
		\| \nabla_x g(x,rLv) \| \leq l \eta_{\theta}(r,v),  \;
		\forall (x,v) \in \mathbb{B}_{\epsilon} (\bar{x}) \times
		\mathbb{B}_{\epsilon} (\bar{v}), \;\; \forall r \geq l. 
	\end{equation*}
\end{definition}

 The next result  corresponds to a gradient formula for the probability function using the spherical radial decomposition. We refer to \cite{vanAckooij_Perez-Aros_2022}, for similar results and further extensions of the next theorem.
\begin{theorem}[Corollary 3.2 \cite{vanAckooij_Perez-Aros_2022}]
	\label{thm:clarkeappli} Let $\mathcal{H}$ be a finite-dimensional Hilbert space, let $\bar{x} \in \mathcal{H}$ be such that $	g(\bar x,0) < 0$, and assume that $g$ 
	satisfies the $\eta_{\theta}$-growth
	condition at $(\bar{x},\bar{v})$ for all $\bar{v} \in I(\bar{x})$.
	Then the probability function $\varphi$ defined in \eqref{probfunc:g} is continuously differentiable on an appropriate neighbourhood $U'$ of
	$\bar x$ with
	\begin{align*}
		\nabla \varphi (x)= \int\limits_{  \mathbb{S}^{m-1}} 	\nabla_x e(x,v)  d\mu_\zeta(v) \textrm{ for all } x \in U',
	\end{align*}
	where, 
	\begin{align*}
		\nabla_x e(x,v) &=\left\{  \begin{array}{cc}
		 -    \frac{ \theta(\rho(x,v),v) 
		 }{%
		 	\left\langle  \nabla_z g(x,\rho(x,v)Lv),Lv\right\rangle } \nabla_x g(x,\rho(x,v)Lv)  &  \text{ if }  v\in F(x),\\
	 	 & \\
	 	0 & \text{ if }  v\in I(x) 
		\end{array}   		
		\right.
	\end{align*}
\end{theorem}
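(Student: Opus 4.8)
The plan is to reduce the computation of $\nabla\varphi$ to a two–stage argument: first differentiate the radial profile $e(x,\cdot)$ pointwise in $x$ for each fixed direction $v$, and then pass the gradient inside the spherical integral $\varphi(x)=\int_{\mathbb{S}^{m-1}}e(x,v)\,d\mu_\zeta(v)$. The first stage is an exercise in the implicit function theorem and the fundamental theorem of calculus; the second is where the $\eta_\theta$-growth condition does its work, supplying the domination needed to interchange differentiation and integration and to obtain continuity of the limiting gradient.

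For the pointwise step, I would fix a direction $v\in F(\bar x)$ and study the one–dimensional profile $h(r):=g(x,rLv)$. This profile is convex in $r$, with $h(0)=g(x,0)<0$ (by the hypothesis $g(\bar x,0)<0$ and continuity for $x$ near $\bar x$) and $h(\rho(x,v))=0$, where $\rho(x,v)=\sup\{r:g(x,rLv)\le 0\}$ is finite. Convexity then forces the one–sided slope at the crossing to satisfy
\[
\langle \nabla_z g(x,\rho(x,v)Lv),Lv\rangle \;\ge\; \frac{h(\rho(x,v))-h(0)}{\rho(x,v)}\;=\;\frac{-g(x,0)}{\rho(x,v)}\;>\;0,
\]
so the zero level is crossed transversally. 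The implicit function theorem applied to $G(x,r):=g(x,rLv)=0$ near $(\bar x,\rho(\bar x,v))$ then gives that $\rho(\cdot,v)$ is $C^1$ with $\nabla_x\rho(x,v)=-\nabla_x g(x,\rho(x,v)Lv)\big/\langle \nabla_z g(x,\rho(x,v)Lv),Lv\rangle$. Since $e(x,v)=\int_0^{\rho(x,v)}\theta(r,v)\,dr$ on $F(x)$ and $\theta$ does not depend on $x$, the chain rule yields $\nabla_x e(x,v)=\theta(\rho(x,v),v)\,\nabla_x\rho(x,v)$, which is precisely the claimed expression on $F(x)$.

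For $v\in I(\bar x)$ the profile $h$ stays strictly below $0$ on all of $[0,\infty)$; a continuity/openness argument shows this persists for $x$ in a neighbourhood of $\bar x$, so $e(\cdot,v)$ is locally constant and $\nabla_x e(x,v)=0$ there, matching the second branch. The main obstacle is then justifying the interchange $\nabla\varphi(x)=\int_{\mathbb{S}^{m-1}}\nabla_x e(x,v)\,d\mu_\zeta(v)$, which demands a $\mu_\zeta$–integrable bound on $\nabla_x e(x,v)$ that is uniform over a neighbourhood of $\bar x$. On compact pieces of $F$ away from the interface with $I$, the radius $\rho$ stays bounded and the bound is routine; the delicate regime is near directions where $\rho(x,v)\to\infty$, since there $\norm{\nabla_x g}$ may blow up. This is exactly what the $\eta_\theta$-growth condition controls: it majorizes $\norm{\nabla_x g(x,rLv)}$ by $l\,\eta_\theta(r,v)$ for large $r$, while the limit condition $\lim_{r\to\infty,\,v\to\bar v} r\,\theta(r,v)\eta_\theta(r,v)=0$ forces the associated contribution to decay, producing the sought dominating function. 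With domination secured, a standard differentiation–under–the–integral theorem delivers the gradient formula on a neighbourhood $U'$ of $\bar x$, and continuity of $\nabla\varphi$ follows from continuity of the integrand in $x$ together with dominated convergence. I would expect the bulk of the genuine technical effort to lie in constructing this dominating function uniformly across the finite/infinite interface and in verifying that the set of directions where $\rho=\infty$, or where a direction switches type, is $\mu_\zeta$-negligible and hence contributes nothing.
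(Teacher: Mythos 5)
First, a remark on the comparison itself: the paper does not prove this theorem; it imports it verbatim as Corollary 3.2 of \cite{vanAckooij_Perez-Aros_2022}, so your proposal can only be measured against the strategy of that literature. Your first stage, for finite directions, is correct and is indeed the standard argument: the convexity-based transversality bound $\langle \nabla_z g(x,\rho(x,v)Lv),Lv\rangle \geq -g(x,0)/\rho(x,v)>0$, the implicit function theorem applied to $g(x,rLv)=0$, and the chain rule on $e(x,v)=\int_0^{\rho(x,v)}\theta(r,v)\,dr$ give exactly the first branch of the formula (and, by convexity of $g(x,\cdot Lv)$ and $g(x,0)<0$, the implicit solution really is $\rho(x,v)$, so finite directions are stable under perturbation of $x$). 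The genuine gap is your treatment of the infinite directions. You claim that for $v\in I(\bar x)$ the property $g(x,rLv)<0$ for all $r\geq 0$ ``persists for $x$ in a neighbourhood of $\bar x$'', so that $e(\cdot,v)$ is locally constant. This is false: the openness argument breaks down because $[0,\infty)$ is not compact, and infinite directions are \emph{not} stable. Take $\mathcal{H}=\mathbb{R}$, $m=1$, $L=1$, $\xi$ standard Gaussian and $g(x,z)=xz-1$ (smooth, linear hence convex in $z$, $g(x,0)=-1<0$, and the $\eta_\theta$-growth condition holds with $\eta_\theta(r,v)=r$). At $\bar x=0$ the direction $v=1$ lies in $I(0)$, yet for every $x>0$ it lies in $F(x)$ with $\rho(x,1)=1/x$; hence $e(\cdot,1)$ is non-constant on every neighbourhood of $0$. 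That $\nabla_x e(\bar x,\bar v)=0$ for $\bar v\in I(\bar x)$ is a statement that must be \emph{proved}, and it is precisely here --- not only in the domination step --- that the growth condition enters: combining transversality with the majorization $\norm{\nabla_x g(x,rLv)}\leq l\,\eta_\theta(r,v)$ gives $\norm{\nabla_x e(x,v)}\leq \tfrac{l}{-g(x,0)}\,\rho(x,v)\,\theta(\rho(x,v),v)\,\eta_\theta(\rho(x,v),v)\to 0$ as $(x,v)\to(\bar x,\bar v)$, since $\rho(x,v)\to\infty$; this vanishing of nearby gradients is what forces the zero derivative at $(\bar x,\bar v)$. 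As written, your proposal assumes the differentiability of the integrand at infinite directions, which is the actual crux of the theorem.

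A second, related flaw is your closing suggestion that one should verify that the set of directions with $\rho=\infty$, or where directions switch type, is $\mu_\zeta$-negligible. This is both unavailable and unnecessary: $I(\bar x)$ can have positive measure (for $g(x,z)=x+z_1$ the sublevel sets are half-spaces and $I(\bar x)$ is a half-sphere), and the theorem does not need it to be null --- the formula simply assigns the value $0$ there. The way the cited proof resolves both difficulties (as the theorem's label in the paper hints) is by \emph{not} insisting on pointwise differentiability of $e(\cdot,v)$ at infinite directions at the outset: one shows that $e(\cdot,v)$ is locally Lipschitz in $x$ uniformly in $v$ (using exactly the gradient bound above plus a compactness argument over $\mathbb{S}^{m-1}$), hence $\varphi$ is locally Lipschitz, and then applies subdifferential calculus for integral functionals to get $\pclarke \varphi(x)\subseteq \int_{\mathbb{S}^{m-1}}\pclarke_x e(x,v)\,d\mu_\zeta(v)$; the estimates show the right-hand side is the singleton given by the claimed formula, and a locally Lipschitz function whose Clarke subdifferential is a singleton on a neighbourhood is continuously (indeed strictly) differentiable there. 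Your dominated-convergence plan can be repaired along these lines --- the dominating function you describe is the right one --- but the repair consists in redeploying it to establish differentiability (or a singleton subdifferential) at the infinite directions, not in discarding them as a null set.
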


\subsection{Inner scalarization of $\varphi$}
\label{innerScalarization}

In this subsection, we describe our inner regularization of the probability function \eqref{Proba:funct}. In order to set up a suitable framework to use the properties of the Moreau envelope we need to impose that our nominal function $\Phi$ in \eqref{Proba:funct} satisfies some convexity properties. A common assumption in the study of probability functions is that the inequality systems satisfies some property of convexity with respect to the random variable $\xi \in  \R^m$, but not necessarily in the decision variable $x\in \mathcal{H}$. Since, our function $\Phi$ is vector-valued, we will suppose that some scalarizations are convex up to the addition of a smooth convex function. Formally, let us consider a (weak$^\ast$-)compact convex set $\C \subseteq \Y^\ast$, which generates the positive polar cone of $\K$, that is, 
\begin{align}\label{generateK}
\operatorname{cl}^{w^\ast} \operatorname{cone} \C = \K^{+}
\end{align}
In what follows, we assume that there is a continuously differentiable convex function $h: \mathcal{H}\to \R$ such that for all $v^\ast \in \C$, the function 
\begin{align}\label{scalar01}
   \mathcal{H}\times \mathbb{R}^m \ni (x,z) \to 	\Phi_{v^\ast}^h (x,z) :=\langle v^\ast,\Phi \rangle (x,z) + h(x)
\end{align}
is convex in both variables, where $\langle v^\ast,\Phi \rangle (x,z) := \langle v^\ast,\Phi(x,z) \rangle$.

\begin{example}[Separated variables in joint chance constrained optimization]
Let us consider the probability function $\varphi(x)=\mathbb{P}(\omega\in \Omega \colon g(x,\xi(\omega))\leq 0)$, where 
$g: \R^n \times \R^m \to \R^s$ is the function defined by $g(x,\xi) = \Psi(x) + A\xi$, where $A$ is a matrix and $\Psi\colon \mathbb{R}^n \to \mathbb{R}^s$ a $C^2$ function. If we set 
$\Phi(x,z) = \Psi(x) +Az$,  $\mathcal{K}:=\R^s_+$, and $\mathcal{C}$ is any convex compact set with $\operatorname{cl} \operatorname{cone}\mathcal{C}=\R^s_{+}$, then $\Phi$ satisfies \eqref{scalar01}. Indeed, since $\Psi=(\Psi_1,\ldots,\Psi_s)$ is $C^2$, there are $C^2$ convex functions $\psi_1^k$ and $\psi_2^k$, for $k=1,\ldots,s$,  such that $\Psi_k=\psi_1^k-\psi_2^k$ (see, e.g.,  \cite{1982Pommelet,MR873269,Oliveira_2020}). Hence, since $\mathcal{C}$ is compact, there exists $C>0$ such that $\Phi$ satisfies \eqref{scalar01} with $h=C\sum_{k=1}^s \psi_2^k$.
\end{example}
Next, let us introduce the supremum function $ S^h_\Phi : \mathcal{H}\times  \mathbb{R}^m \to \R$ given by
\begin{equation}\label{supfunctionS}
	S^h_\Phi(x,z):=\sup\left\{ 	\langle v^\ast,\Phi \rangle (x,z) + h(x) : v^\ast \in \mathcal{C}        \right\}.
\end{equation}
Moreover, for $h = 0$, we simply write $S_\Phi:=S^0_\Phi$. 

The next proposition enables us to rewrite the probability function \eqref{Proba:funct} in terms of the supremum function \eqref{supfunctionS}.
\begin{proposition}
Let  $\mathcal{H}$ be a separable Hilbert space, $\xi : \Omega \to \R^m$ be a random vector, 
$\K\subset \Y$ be a (nonempty) convex cone of a (possibly nonseparable) Banach space and  $\Phi :\mathcal{H}\times  \mathbb{R}^m \to \Y$ be a function such that \eqref{scalar01} holds. Then, 
\begin{equation}\label{Proba:funct:2}
		\varphi(x) = \mathbb{P}\left( \omega \in \Omega : S^h_\Phi (x,\xi(\omega)) \leq h(x) \right) \text{ for all } x\in \mathcal{H}.
\end{equation}
\end{proposition}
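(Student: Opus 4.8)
The plan is to reduce the claim, for each fixed $x\in\mathcal{H}$, to the set-theoretic identity
\[
\{\omega\in\Omega : \Phi(x,\xi(\omega))\in-\K\}=\{\omega\in\Omega : S^h_\Phi(x,\xi(\omega))\leq h(x)\},
\]
after which \eqref{Proba:funct:2} follows immediately by applying $\mathbb{P}$ to both sides. Since the term $h(x)$ in \eqref{supfunctionS} is independent of $v^\ast$, we have $S^h_\Phi(x,z)=h(x)+\sup_{v^\ast\in\C}\langle v^\ast,\Phi(x,z)\rangle$, so that $S^h_\Phi(x,z)\leq h(x)$ is equivalent to $\langle v^\ast,\Phi(x,z)\rangle\leq 0$ for all $v^\ast\in\C$. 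Writing $y:=\Phi(x,z)$, the whole proposition therefore collapses to the purely dual, deterministic equivalence
\[
y\in-\K \iff \langle v^\ast,y\rangle\leq 0 \ \text{ for all } v^\ast\in\C,
\]
whose proof I would organize around the bipolar theorem for convex cones in the duality pairing between $\Y$ and $\Y^\ast$ under the weak$^\ast$ topology.

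For the forward implication I would first observe that the generation hypothesis \eqref{generateK} yields $\C\subseteq\operatorname{cone}\C\subseteq\operatorname{cl}^{w^\ast}\operatorname{cone}\C=\K^{+}$. Hence, if $y\in-\K$, i.e. $-y\in\K$, then for every $v^\ast\in\C\subseteq\K^{+}$ the definition of $\K^{+}$ gives $\langle v^\ast,-y\rangle\geq 0$, that is $\langle v^\ast,y\rangle\leq 0$. This direction uses only the inclusion $\C\subseteq\K^{+}$ and no topology.

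The reverse implication is where the actual work lies, and I would proceed in two stages. Assuming $\langle v^\ast,y\rangle\leq 0$ for all $v^\ast\in\C$, the inequality is preserved under multiplication by nonnegative scalars, so it holds on $\operatorname{cone}\C$; and since, for the fixed element $y\in\Y$, the evaluation map $v^\ast\mapsto\langle v^\ast,y\rangle$ is weak$^\ast$-continuous by the very definition of the weak$^\ast$ topology, the inequality passes to the weak$^\ast$ closure, giving $\langle v^\ast,y\rangle\leq 0$ for all $v^\ast\in\operatorname{cl}^{w^\ast}\operatorname{cone}\C=\K^{+}$. Equivalently, $-y$ lies in the set $\{w\in\Y : \langle v^\ast,w\rangle\geq 0 \text{ for all } v^\ast\in\K^{+}\}$. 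By the bipolar theorem for cones, this set is precisely the closed convex conical hull of $\K$, hence $\K$ itself, so that $-y\in\K$ and therefore $y\in-\K$, as required.

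The main obstacle, and the only genuinely delicate point, is the closing step of the reverse implication: it combines the weak$^\ast$ continuity of evaluation at a fixed point of $\Y$ (needed to reach all of $\K^{+}=\operatorname{cl}^{w^\ast}\operatorname{cone}\C$ from the generating set $\C$) with the bipolar identity identifying the dual cone of $\K^{+}$, taken back in $\Y$, with $\K$. The latter is exactly where closedness of $\K$ is indispensable, and I would flag this explicitly: without it one recovers only $-y\in\operatorname{cl}\K$, so the two events could disagree on the preimage of the boundary $-(\operatorname{cl}\K\setminus\K)$. Since the cones of interest in the applications (the nonnegative orthant $\R^s_+$, the positive semidefinite cone) are closed, this hypothesis is harmless, but it must be invoked for the equivalence to be exact.
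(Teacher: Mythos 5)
Your proof is correct and takes essentially the same route as the paper's: both reduce \eqref{Proba:funct:2} to the pointwise dual equivalence $y\in-\K \iff \langle v^\ast,y\rangle\leq 0$ for all $v^\ast\in\C$, proved by combining the bipolar theorem with the generation property \eqref{generateK} (passing from $\C$ to $\K^{+}$ by positive homogeneity and weak$^\ast$ continuity of evaluation). Your explicit remark that closedness of $\K$ is indispensable in the final bipolar step---without it one only recovers $-y\in\operatorname{cl}\K$---is accurate and is a hypothesis the paper's own proof uses silently.
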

\begin{proof}
Fix $ x\in \mathcal{H}$ and $ \omega \in \Omega$. Then, by the bipolar theorem (see, e.g., \cite[Theorem 3.38 p.99]{MR2766381})  we have that 
\begin{align*}
   \Phi(x,\xi(\omega) ) \in -\mathcal{K} & \Leftrightarrow  -\Phi(x,\xi(\omega) ) \in (\mathcal{K}^{-})^-  \\ 
   & \Leftrightarrow \langle v^\ast,-\Phi \rangle (x,\xi(\omega)) \leq 0, \forall v^\ast \in  \K^- \\
   & \Leftrightarrow \langle v^\ast,\Phi \rangle (x,\xi(\omega)) \leq 0, \forall v^\ast \in  \K^+ \\
   & \Leftrightarrow  	\langle v^\ast,\Phi \rangle (x,\xi(\omega)) \leq 0, \forall v^\ast \in \C\\
   & \Leftrightarrow  	\langle v^\ast,\Phi \rangle (x,\xi(\omega)) + h(x) \leq h(x),  \forall v^\ast \in  \C  \\
    & \Leftrightarrow  S^h_\Phi (x,\xi(\omega)) \leq h(x),
\end{align*}
where we used the fact that $\C$ generates the positive polar cone of $\K$ (see \eqref{generateK}), which proves \eqref{Proba:funct:2}.
\smartqed
\end{proof}

The previous formula \eqref{Proba:funct:2} for the probability function \eqref{Proba:funct} allows us to propose a inner regularization based on the Moreau envelope. Given $\lambda>0$, we define the inner regularization of $\varphi$ as
\begin{equation}\label{Proba:funct:lamb}
		\varphi_{\lambda}(x) := \mathbb{P}\left( \omega \in \Omega : \PMoreau{\Phi}{\lambda}{h} (x,\xi(\omega)) \leq h(x) \right),
\end{equation}
where $\PMoreau{\Phi}{\lambda}{h} := \MoreauYosida{ S^h_\Phi}{\lambda}$  is the Moreau envelope of the supremum function \eqref{supfunctionS}. It is worth to emphasize that the Moreau envelope of the supremum function \eqref{supfunctionS} is the supremum of Moreau envelopes of the scalarizations \eqref{scalar01}, which is established in the next result.
\begin{proposition}
Let $\Phi : \mathcal{H}\times \mathbb{R}^m \to  \Y$ be a continuous functions satisfying \eqref{scalar01} for some continuously differentiable convex function $h$. Then, for all $\lambda >0$
\begin{align*}
\PMoreau{\Phi}{\lambda}{h} (x,z) = \max\limits_{ v^\ast \in  \C } \MoreauYosida{\Phi_{v^\ast}^h}{\lambda}(x,z)  \text{ for all } (x,z) \in \mathcal{H}\times \mathbb{R}^m.
\end{align*}
\label{prop:moreaumaxismaxmoreau}
\end{proposition}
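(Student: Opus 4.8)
The plan is to read the claimed identity as an interchange of an infimum and a supremum, and to justify that interchange by a minimax argument. Writing $w=(x,z)\in\mathcal{H}\times\R^m$ and expanding the Moreau envelope of the supremum function according to its definition, the left-hand side becomes
$$\PMoreau{\Phi}{\lambda}{h}(w)=\inf_{u\in\mathcal{H}\times\R^m}\ \sup_{v^\ast\in\C}\ \Big(\Phi_{v^\ast}^h(u)+\tfrac{1}{2\lambda}\norm{w-u}^2\Big),$$
while the right-hand side is
$$\sup_{v^\ast\in\C}\ \MoreauYosida{\Phi_{v^\ast}^h}{\lambda}(w)=\sup_{v^\ast\in\C}\ \inf_{u\in\mathcal{H}\times\R^m}\ \Big(\Phi_{v^\ast}^h(u)+\tfrac{1}{2\lambda}\norm{w-u}^2\Big).$$
Thus the statement reduces to swapping $\inf_u$ and $\sup_{v^\ast}$ in the Lagrangian-type function $L(u,v^\ast):=\Phi_{v^\ast}^h(u)+\tfrac{1}{2\lambda}\norm{w-u}^2$, together with an argument that the outer supremum on the right is attained.

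First I would record the structural properties of $L$. Since $\Phi$ is continuous and $h$ is continuous and convex, each $\Phi_{v^\ast}^h$ is a finite-valued continuous convex function, so $L(\cdot,v^\ast)$ is convex and lower semicontinuous (indeed continuous) on $\mathcal{H}\times\R^m$ for every fixed $v^\ast$. On the other hand, for fixed $u=(x,z)$ the map $v^\ast\mapsto L(u,v^\ast)=\langle v^\ast,\Phi(x,z)\rangle+h(x)+\tfrac{1}{2\lambda}\norm{w-u}^2$ is affine and weak$^\ast$-continuous, being an evaluation of $v^\ast$ against the fixed element $\Phi(x,z)\in\Y$; in particular $L(u,\cdot)$ is quasi-concave and weak$^\ast$-upper semicontinuous. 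Combined with the fact that $\C$ is convex and weak$^\ast$-compact, this places us exactly within the hypotheses of Sion's minimax theorem (applied with the noncompact convex set $\mathcal{H}\times\R^m$ and the weak$^\ast$-compact convex set $\C$), which yields the desired equality $\inf_u\sup_{v^\ast}L=\sup_{v^\ast}\inf_u L$.

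It then remains to upgrade the supremum on the right-hand side to a maximum. Here I would observe that $v^\ast\mapsto\MoreauYosida{\Phi_{v^\ast}^h}{\lambda}(w)=\inf_u L(u,v^\ast)$ is a pointwise infimum of the weak$^\ast$-continuous affine maps $v^\ast\mapsto L(u,v^\ast)$, hence weak$^\ast$-upper semicontinuous; since $\C$ is weak$^\ast$-compact, such a function attains its maximum on $\C$, so the $\sup$ is legitimately a $\max$.

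The main obstacle I anticipate is verifying, with the correct topology, the semicontinuity and concavity hypotheses that make the minimax interchange valid, in particular the weak$^\ast$-upper semicontinuity of $L(u,\cdot)$ on the possibly nonseparable dual $\Y^\ast$, and confirming that the inner infima over $u$ are genuine real numbers (this follows because each $\Phi_{v^\ast}^h\in\Gamma_0(\mathcal{H}\times\R^m)$ admits an affine minorant that is dominated by the quadratic term, so the envelope cannot be $-\infty$). Once these regularity points are secured, Sion's theorem does the essential work and the attainment argument is routine.
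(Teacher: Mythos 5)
Your proposal is correct, but it takes a different expository route than the paper. The paper's proof consists of verifying the same structural facts you identify --- convexity of $(x,z)\mapsto \Phi_{v^\ast}^h(x,z)$, concavity (in fact affinity) and weak$^\ast$-continuity of $v^\ast\mapsto \langle v^\ast,\Phi(x,z)\rangle$, and weak$^\ast$-compactness and convexity of $\C$ --- and then invoking an external result, namely \cite[Theorem 3.1]{Perez-Vilches2021}, which states precisely that the Moreau envelope of such a supremum function equals the maximum of the Moreau envelopes. You instead prove the identity from scratch: you unfold both sides as an $\inf$-$\sup$ and a $\sup$-$\inf$ of the Lagrangian $L(u,v^\ast)=\Phi_{v^\ast}^h(u)+\tfrac{1}{2\lambda}\norm{w-u}^2$, justify the interchange by Sion's minimax theorem (using that only one of the two sets needs to be compact), and upgrade the supremum to a maximum by weak$^\ast$-upper semicontinuity of the marginal $v^\ast\mapsto \inf_u L(u,v^\ast)$ on the weak$^\ast$-compact set $\C$. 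Your treatment of the finiteness issue (affine minorants of $\Phi_{v^\ast}^h\in\Gamma_0$ dominated by the quadratic term) is also sound. What each approach buys: the paper's citation is shorter and leans on a general theorem that covers parametrized families whose dependence on the parameter is merely concave rather than affine, which is the generality the cited result provides; your argument is self-contained, makes the minimax mechanism explicit, and shows exactly which hypotheses (affinity in $v^\ast$, weak$^\ast$-compactness of $\C$) carry the proof, at the modest cost of having to be careful about topologies on the possibly nonseparable dual $\Y^\ast$ --- a point you correctly flag and resolve, since evaluation against the fixed element $\Phi(x,z)\in\Y$ is weak$^\ast$-continuous by definition of that topology.
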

\begin{proof}
By virtue of \eqref{scalar01}, it is clear that the function $(x,z,v^\ast) \to \Phi_{v^\ast}^h(x,z)$ is, by assumption, convex with respect to $(x,z)$, and readily seen to be concave with respect to $v^\ast\in \mathcal{C}$. Moreover, the set $\C$ is (weak$^\ast$-)compact and the function $v^\ast \to  \Phi_{v^\ast} (x,z) $ is continuous for fixed $(x,z)$. Thus, the result follows from \cite[Theorem 3.1]{Perez-Vilches2021}.
\smartqed
\end{proof}

\section{Variational convergence of $\varphi_\lambda$}
\label{sec:varcvg}
 
In this section, we show that our inner regularization of the probability function \eqref{Proba:funct} inherits similar variational properties from the Moreau envelope (see Proposition \ref{basicprop}).
 
\begin{theorem}\label{var_convergence}
Let $\mathcal{H}$ be a separable Hilbert space, $\xi : \Omega \to \R^m$ be a random vector having density with respect to the Lebesgue measure, 
$\K\subset \Y$ be a (nonempty) convex cone of a (possible nonseparable) Banach space and  $\Phi :\mathcal{H}\times  \mathbb{R}^m \to \Y$ be  a continuous function such that \eqref{scalar01} holds. Then, the probability function $\varphi$ given in \eqref{Proba:funct} and the regularization $\varphi_\lambda$ given in \eqref{Proba:funct:lamb} satisfy the following properties:
\begin{enumerate}[label=\alph*)]
	\item For all $\lambda_1 > \lambda_2 >0$, $	\varphi_{\lambda_1} (x) \geq 	\varphi_{\lambda_2} (x)$ and $\inf\limits_{\lambda >0} \varphi_\lambda(x)= \varphi(x)$ for all $x \in \mathcal{H}$.

\item For any sequence $\lambda_{k} \to 0$ and $x_k \to x$ we have that 
\begin{align}\label{upper:ineq}
	\limsup\limits_{k \to \infty} \varphi_{\lambda_k} (x_k)  \leq \varphi(x).	
\end{align}
Furthermore, if the function $h$ from \eqref{scalar01} is sequentially weakly continuous on $\mathcal H$, then for any sequence $\lambda_{k} \to 0$ and $x_k \rightharpoonup x$ we have that  \eqref{upper:ineq} also holds.

\item For any sequence $\lambda_{k} \to 0$ and any sequence $x_k \to x \in \D$, we have
\begin{align}\label{equality:limits}
	\lim\limits_{k \to \infty} \varphi_{\lambda_k} (x_k) = \varphi(x).	
\end{align}
where $\D$ is the open set $\D:=\{ x\in \mathcal{H}:\exists z \text{ s.t. } S^h_\Phi(x,z) < h(x)\}$.
\item The functions $\varphi$ and $\varphi_\lambda$ are sequentially weakly upper semicontinuous on $\mathcal{H}$. 
\item The functions $\varphi$ and $\varphi_\lambda$ are continuous on $\D$.
\end{enumerate}
 \end{theorem}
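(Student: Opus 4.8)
The plan is to translate every assertion into a statement about the parametrised sublevel sets
\[
A_\lambda(x) := \{ z \in \R^m : \PMoreau{\Phi}{\lambda}{h}(x,z) \le h(x)\}, \quad A_0(x) := \{ z \in \R^m : S^h_\Phi(x,z) \le h(x)\},
\]
so that, by \eqref{Proba:funct:2}, we may write $\varphi_\lambda(x)=\int_{\R^m}\mathbf 1_{A_\lambda(x)}(z)\,f_\xi(z)\,dz$ and $\varphi(x)=\int_{\R^m}\mathbf 1_{A_0(x)}(z)\,f_\xi(z)\,dz$. All five items then reduce to controlling the pointwise behaviour of the indicators $\mathbf 1_{A_{\lambda_k}(x_k)}$ and feeding it into a (reverse) Fatou lemma, the variational input being supplied by Proposition \ref{basicprop}. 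For item a) I would first note that $\lambda_1>\lambda_2$ forces $\PMoreau{\Phi}{\lambda_1}{h}\le \PMoreau{\Phi}{\lambda_2}{h}\le S^h_\Phi$ (monotonicity of the envelope in $\lambda$, Proposition \ref{basicprop}a)), hence $A_{\lambda_1}(x)\supseteq A_{\lambda_2}(x)\supseteq A_0(x)$ and the claimed monotonicity of $\varphi_\lambda$. Since $\PMoreau{\Phi}{\lambda}{h}(x,z)\nearrow S^h_\Phi(x,z)$ as $\lambda\searrow 0$, one checks $\bigcap_{\lambda>0}A_\lambda(x)=A_0(x)$, and continuity of the probability measure from above along a decreasing sequence $\lambda_k\searrow0$ gives $\inf_{\lambda>0}\varphi_\lambda(x)=\lim_{k}\varphi_{\lambda_k}(x)=\varphi(x)$.

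For item b) I would apply the reverse Fatou lemma (the integrands are dominated by the integrable $f_\xi$) to obtain $\limsup_k\varphi_{\lambda_k}(x_k)\le\int_{\R^m}\limsup_k\mathbf 1_{A_{\lambda_k}(x_k)}(z)\,f_\xi(z)\,dz$, so it suffices to show $\limsup_k\mathbf 1_{A_{\lambda_k}(x_k)}\le\mathbf 1_{A_0(x)}$ pointwise. If $z$ belongs to $A_{\lambda_k}(x_k)$ along a subsequence, then $\PMoreau{\Phi}{\lambda_k}{h}(x_k,z)\le h(x_k)$; applying the lower epi-convergence of the envelope to the convex lsc function $S^h_\Phi$ on $\mathcal{H}\times\R^m$ (Proposition \ref{basicprop}c)), with $(x_k,z)\to(x,z)$ and $\lambda_k\searrow0$, yields $S^h_\Phi(x,z)\le\liminf_k\PMoreau{\Phi}{\lambda_k}{h}(x_k,z)\le\liminf_k h(x_k)=h(x)$ by continuity of $h$, i.e. $z\in A_0(x)$. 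The weakly convergent case is identical, because Proposition \ref{basicprop}c) already permits $x_k\rightharpoonup x$; the only extra ingredient is $h(x_k)\to h(x)$, which is exactly the sequential weak continuity of $h$ that is then assumed.

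For item c) the inequality ``$\le$'' is item b), so I would concentrate on ``$\ge$''. Here I would use the continuous convergence of the envelope (Proposition \ref{basicprop}d)): for $x_k\to x$ and $\lambda_k\searrow0$ one has $\PMoreau{\Phi}{\lambda_k}{h}(x_k,z)\to S^h_\Phi(x,z)$; hence if $S^h_\Phi(x,z)<h(x)$ then $\PMoreau{\Phi}{\lambda_k}{h}(x_k,z)<h(x_k)$ for large $k$, which shows $\liminf_k\mathbf 1_{A_{\lambda_k}(x_k)}\ge\mathbf 1_{\{S^h_\Phi(x,\cdot)<h(x)\}}$, and Fatou's lemma gives $\liminf_k\varphi_{\lambda_k}(x_k)\ge\P(S^h_\Phi(x,\xi)<h(x))$. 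The decisive step is to upgrade the strict inequality to a non-strict one, i.e. to prove $\P(S^h_\Phi(x,\xi)=h(x))=0$ for $x\in\D$: since $z\mapsto S^h_\Phi(x,z)$ is convex and $x\in\D$ supplies a Slater point, the qualification $\interior\{z:S^h_\Phi(x,z)\le h(x)\}=\{z:S^h_\Phi(x,z)<h(x)\}$ holds, so $\{z:S^h_\Phi(x,z)=h(x)\}$ is the boundary of a convex body, which has zero Lebesgue measure; as $\xi$ admits a density this boundary is $\P$-null and $\P(S^h_\Phi(x,\xi)<h(x))=\varphi(x)$, closing the two bounds. I regard this ``measure-zero of the boundary'' argument, rather than the envelope convergence itself, as the technical heart of the statement, and it is reused in item e).

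For item e) I would rerun the Fatou argument of c) with $\lambda$ fixed, using the joint continuity of $(x,z)\mapsto\PMoreau{\Phi}{\lambda}{h}(x,z)-h(x)$ and of $(x,z)\mapsto S^h_\Phi(x,z)-h(x)$ for upper semicontinuity, and the same Slater/measure-zero argument for lower semicontinuity; this is legitimate because $\PMoreau{\Phi}{\lambda}{h}(x,z_0)\le S^h_\Phi(x,z_0)<h(x)$ whenever $x\in\D$, so $\D$ consists of Slater points for $\varphi_\lambda$ as well. Finally, for the weak upper semicontinuity in d) I would prove it first for $\varphi_\lambda$ by reverse Fatou together with the weak sequential lower semicontinuity of the convex continuous functions $S^h_\Phi$ and $\PMoreau{\Phi}{\lambda}{h}$, and then deduce it for $\varphi=\inf_{\lambda>0}\varphi_\lambda$ as an infimum of weakly upper semicontinuous functions. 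I expect the main obstacle to lie precisely here: the defining inequality compares a weakly lsc convex function against the term $h(x_k)$, and passing to the weak limit controls $S^h_\Phi(x,z)$ (resp. $\PMoreau{\Phi}{\lambda}{h}(x,z)$) only from below, so that $\limsup_k h(x_k)$ must be handled separately — which is automatic along strongly convergent sequences and under the weak-continuity hypothesis of b), and is the point requiring the most care.
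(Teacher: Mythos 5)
Your proposal is correct and follows essentially the same route as the paper's proof: both reduce all five items to the pointwise behaviour of indicators of the sublevel sets of $\PMoreau{\Phi}{\lambda}{h}-h$ and $S^h_\Phi-h$, feed this into Fatou's lemma (the paper applies ordinary Fatou to the complementary sets, you apply reverse Fatou to the sets themselves, which is the same thing), draw the variational input from Proposition \ref{basicprop}, and close items c) and e) by the fact that $\P\bigl(S^h_\Phi(x,\xi)=h(x)\bigr)=0$ for $x\in\D$. Three points of comparison are still worth recording. First, for item a) you replace the paper's Fatou argument by monotonicity of the sublevel sets plus continuity from above of the probability measure along $\bigcap_{\lambda>0}A_\lambda(x)=A_0(x)$; this is more elementary and equally valid. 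Second, you actually prove the measure-zero fact (convexity of $S^h_\Phi(x,\cdot)$, the Slater point supplied by $x\in\D$, and the Lebesgue-nullity of the boundary of a convex set), whereas the paper merely asserts it; you are right that this, and not the envelope convergence, is the technical heart of c) and e).

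Third, your hesitation at item d) is not a gap in your argument but a correct diagnosis of a gap in the statement itself: the paper's own proof of d) invokes ``the sequentially weak continuity of $h$'', a hypothesis which the theorem attaches only to item b), and without it d) is false in general. Indeed, take $\mathcal H$ infinite-dimensional, $\Y=\R$, $\K=\R_+$, $\C=\{1\}$, $\Phi(x,z)=z-\Vert x\Vert^2$ and $h(x)=\Vert x\Vert^2$, so that \eqref{scalar01} holds with $\Phi^h_1(x,z)=z$, and let $\xi\sim\mathcal N(0,1)$; then $\varphi(x)=F_\xi(\Vert x\Vert^2)$, with $F_\xi$ the distribution function of $\xi$, and along an orthonormal sequence $e_n\rightharpoonup 0$ one gets $\limsup_n\varphi(e_n)=F_\xi(1)>F_\xi(0)=\varphi(0)$, so $\varphi$ is not sequentially weakly upper semicontinuous. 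Hence your route for d) --- prove the property for $\varphi_\lambda$ and pass to $\varphi=\inf_{\lambda>0}\varphi_\lambda$ as an infimum of weakly upper semicontinuous functions --- is exactly as complete as the paper's proof, and your explicit isolation of the term $\limsup_k h(x_k)$ states precisely the hypothesis both arguments need.
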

 
 \begin{proof} 
 To prove a), let $\lambda_1 > \lambda_2 >0$. Then, for any fixed $(x,z)$ we have the inequalities $\PMoreau{\Phi}{\lambda_1}{h}(x,z) \leq \PMoreau{\Phi}{\lambda_2}{h}(x,z) \leq S^h_\Phi(x,z )$. Hence, $\varphi_{\lambda_1} (x) \geq 	\varphi_{\lambda_2} (x) \geq \varphi(x)$ and, by virtue of Proposition \ref{basicprop}, item a), it follows that $$\lim\limits_{\lambda\searrow 0} \varphi_\lambda(x)=\inf\limits_{\lambda >0} \varphi_\lambda(x)\geq \varphi(x).$$
	 Let $\lambda_{k} \to 0$ and fix $x\in \mathcal{H}$. By Proposition \ref{basicprop}, item a) we have that
	 \begin{align*}
	   \lim\limits_{k\to \infty} \PMoreau{\Phi}{\lambda_k}{h}(x,z) &=  S^h_\Phi (x,z ), \text{ for all } z\in \mathbb{R}^m.
	 \end{align*}
	 Thus, for all $z\in \mathbb{R}^m$, $ \liminf\limits_{ k \to \infty} \mathds{1}_{\hat{A}_k}( z) \geq \mathds{1}_{\hat{A}}(z)$, where 
	  \begin{equation*}
	 	\hat{A}_k := \{ z\in \mathbb{R}^m : \PMoreau{\Phi}{\lambda_k}{h}(x,z)> h(x)\},\,	\hat{A} :=  \{ z\in \mathbb{R}^m : S^h_\Phi (x,z ) > h(x)\}.
	 \end{equation*}
	
Then, by using the fact that $\xi$ has a density with respect to the Lebesgue measure and applying Fatou's Lemma, we get 
	 \begin{align*}
	 	1- \lim\limits_{k\to \infty} \varphi_{\lambda_{k}} (x)=\lim\limits_{k\to \infty} \mathbb{P}( \xi^{-1}(\hat{A}_k))&=\liminf\limits_{k\to \infty} \mathbb{P}( \xi^{-1}(\hat{A}_k)) \\
	 	&\geq \mathbb{P}( \xi^{-1}(\hat{A})) =1- \varphi(x).
	 \end{align*}
	 Therefore, $\lim\limits_{k\rightarrow\infty} \varphi_{\lambda_k}(x)\leq \varphi(x)$, which concludes the proof of a).
	 
	 To prove b), consider $\lambda_{k} \to 0$, $x_k \to x$ ($x_k \rightharpoonup x$, respectively) and the sets
	 \begin{align*}
	 	A_k& := \{ z\in \mathbb{R}^m : \PMoreau{\Phi}{\lambda_k}{h}(x_k,z)> h(x_k) \},\,	A :=  \{ z\in \mathbb{R}^m : S^h_\Phi (x,z ) > h(x) \}.
	 \end{align*}
	 Now, due to Proposition \ref{basicprop}, item c) and the continuity of $h$ (sequentially weak continuity of $h$ on $\mathcal{H}$, respectively) we get for any $z \in \Re^m$ that 
	 \begin{equation*}
	   \liminf\limits_{k\to \infty} \left( \PMoreau{\Phi}{\lambda_k}{h}(x_k,z)-h(x_k) \right) \geq  S^h_\Phi (x,z )-h(x),
	 \end{equation*}
	 which implies
	 \begin{align*}
	 	\liminf\limits_{ k \to \infty} \mathds{1}_{A_k}( z) \geq \mathds{1}_A(z), \text{ for all } z\in \mathbb{R}^m.
	 \end{align*}
	 Then using again the fact that $\xi$ has a density   and applying Fatou's Lemma we get  
	 \begin{align*}
	 	1- \limsup\limits_{k\to \infty} \varphi_{\lambda_{k}} (x_k)=\liminf\limits_{k\to \infty} \mathbb{P}( \xi^{-1}(A_k)) \geq \mathbb{P}( \xi^{-1}(A)) =1- \varphi(x), 
	 \end{align*}
	 which proves \eqref{upper:ineq}.
	 
	 Now, let us show c). Assume that $x_k \to x$, so by Proposition \ref{basicprop}, item d) and the continuity of $h$ we have that 
	  \begin{align}\label{eqlim001}
	    \lim_{k\to \infty} (\PMoreau{\Phi}{\lambda_k}{h}(x_k,z)-h(x_k)) =  S^h_\Phi (x,z )-h(x)  \text{ for all } z \in \mathbb{R}^m.
	  \end{align} Hence, by using the sets $A_k$ and $A$ defined above and by similar arguments as before, we obtain
	  \begin{equation}\label{upper:ineq2}
	    \limsup\limits_{k \to \infty} \varphi_{\lambda_k} (x_k)  \leq \varphi(x).
	  \end{equation}
   On the other hand, we consider the sets
	  \begin{align*}
	 	B_k& := \{ z\in \mathbb{R}^m : \MoreauYosida{\Phi}{\lambda_k}(x_k,z)<h(x_k) \},\,	B :=  \{ z\in \mathbb{R}^m : S^h_\Phi (x,z ) < h(x) \}.
	  \end{align*}
	 Then, mimic the last proof, we obtain that
	 \begin{align}\label{ineq:001}
	  \liminf\limits_{k\to \infty} \mathbb{P}( \xi^{-1}(B_k)) \geq \mathbb{P}( \xi^{-1}(B)). 
	 \end{align}
	 Since $x\in \D$ (and recalling that $\xi $ has density), we have that $\mathbb{P}( S^h_\Phi (x,\xi) =h(x))=0$. Hence, by using \eqref{upper:ineq2} and  \eqref{ineq:001}, it follows that
	 \begin{align*}
	 		\limsup\limits_{k \to \infty} \varphi_{\lambda_k} (x_k)  \leq \varphi(x) = \mathbb{P}( \xi^{-1}(B))  \leq   \liminf\limits_{k\to \infty} \mathbb{P}( \xi^{-1}(B_k)) \leq   \liminf\limits_{k\to \infty} \varphi(x_k),
	 \end{align*}
	 which completes the proof of \eqref{equality:limits}.
	 
	 To prove d), we consider $x_n\rightharpoonup x$ and the sets 
	 \begin{align*}
	 	C_n& := \{ z\in \mathbb{R}^m : S^h_\Phi (x_n,z ) >h(x_n) \},\,	C :=  \{ z\in \mathbb{R}^m : S^h_\Phi (x,z ) > h(x) \}.
	  \end{align*}
	 From the weak lower semicontinuity of $S^h_\Phi$ and the sequentially weak continuity of $h$, 
	 $$\liminf\limits_{n\to \infty}\left( S^h_\Phi (x_n,z )-h(x_n) \right)\geq S^h_\Phi (x,z)-h(x).$$ 
	 Hence,  following  an analogous argumentation, we can conclude that
	 \begin{equation*}
	   \liminf\limits_{n\to \infty} \mathbb{P}( \xi^{-1}(C_n)) \geq \mathbb{P}( \xi^{-1}(C)).
	 \end{equation*}
	 Thus,   applying Fatou's Lemma, we get  
	 \begin{align*}
	 	1- \limsup\limits_{n\to \infty} \varphi (x_n)=\liminf\limits_{n\to \infty} \mathbb{P}( \xi^{-1}(C_n)) \geq \mathbb{P}( \xi^{-1}(C)) =1- \varphi(x).
	 \end{align*}
	 Therefore, $\limsup\limits_{n\to \infty}\varphi(x_n)\leq \varphi(x)$. Now for a fixed $\lambda > 0$, the upper semicontinuity of $\varphi_{\lambda}$ follows from similar arguments as before but upon considering the sets
	 \begin{align*}
	 	\hat{C}_n& := \{ z\in \mathbb{R}^m : \PMoreau{\Phi}{\lambda}{h}(x_n,z)> h(x_n) \},\,	\hat{C} :=  \{ z\in \mathbb{R}^m : \PMoreau{\Phi}{\lambda}{h}(x,z)> h(x)\},
	  \end{align*}
	 
 	 Finally, let us prove e). Assume that $x_n \to x$. By the continuity of $S^h_\Phi$ and continuity of $h$, we have \eqref{eqlim001} holds. Thus, by using, once again, similar arguments but with the sets
	  \begin{align*}
	 	D_n& := \{ z\in \mathbb{R}^m : S^h_\Phi (x_n,z )<h(x_n) \},\,	D :=  \{ z\in \mathbb{R}^m : S^h_\Phi (x,z ) < h(x) \},
	  \end{align*}
	 we get that
	 \begin{align}\label{ineq:002}
	  \liminf\limits_{n\to \infty} \mathbb{P}( \xi^{-1}(D_n)) \geq \mathbb{P}( \xi^{-1}(D)). 
	 \end{align}
	 Now, since $x\in \D$  we have that $\mathbb{P}( S^h_\Phi (x,\xi) =h(x))=0$. Hence, by using part d) and  \eqref{ineq:002}, we have
	 \begin{align*}
	 		\limsup\limits_{n \to \infty} \varphi(x_n)  \leq \varphi(x) = \mathbb{P}( \xi^{-1}(D))  \leq   \liminf\limits_{n\to \infty} \mathbb{P}( \xi^{-1}(D_n)) \leq   \liminf\limits_{n\to \infty} \varphi(x_n),
	 \end{align*}
	 which yields the continuity of $\varphi$. The continuity of $\varphi_{\lambda}$ follows from similar arguments.
\smartqed
\end{proof}

\begin{remark}[Slater condition for $S_\Phi^h$]
It is worth mentioning that, in order to have the existence of a point $(x,z)$ such that $\S_\Phi^h(x,z) < h(x)$, the set $\C$ cannot contain the zero vector. Indeed, if $0 \in \C$, then from \eqref{supfunctionS} it follows that $\S_\Phi^h(x,z) \geq h(x)$, for all $(x,z) \in\mathcal{H}\times \mathbb{R}^m$.
On the other hand, if $\C$ is such that $\inf\{ \| v^\ast \| : v^\ast \in \C\} >0$ and $(x,z)$ satisfy $\Phi(x,z) \in \interior (-\K)$, then $\langle w^*, \Phi(x,z) \rangle < -\eta\| v^\ast\|$ holds for all $v^\ast \in \C$ with some $\eta >0$, and, thus, $S_\Phi^h(x,z) < h(x)$.
\end{remark}

Now, we formally describe the convergence properties of the family $\varphi_\lambda$ to the function $\varphi$ in terms of hypo-convergence.
\begin{corollary}
  Under the assumptions of Theorem \ref{var_convergence}, the sequence of regularizations $\varphi_\lambda$ hypo-converges to the probability function $\varphi$. In addition, suppose that the function $h$ in \eqref{scalar01} is weakly continuous, then the sequence of regularizations $\varphi_\lambda$  Mosco hypo-converges to the probability function $\varphi$.
\end{corollary}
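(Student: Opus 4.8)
The plan is to read off the corollary directly from the definitions recalled in Section \ref{sec:preliminary} together with parts a) and b) of Theorem \ref{var_convergence}; no new analysis is required, only a careful matching of inequalities. Recall that hypo-convergence of $\varphi_\lambda$ to $\varphi$ is, by definition, epi-convergence of $-\varphi_\lambda$ to $-\varphi$. Unwinding this for an arbitrary sequence $\lambda_k \to 0$, it amounts to two conditions: an \emph{ascent} condition requiring that for each $x$ there exists $x_k \to x$ with $\liminf_{k\to\infty}\varphi_{\lambda_k}(x_k) \geq \varphi(x)$ (this is condition $a')$ applied to $-\varphi_{\lambda_k}$), and an \emph{upper} condition requiring that for every $x_k \to x$ one has $\limsup_{k\to\infty}\varphi_{\lambda_k}(x_k) \leq \varphi(x)$ (this is condition $b')$ applied to $-\varphi_{\lambda_k}$).

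I would verify the upper condition by simply invoking part b) of Theorem \ref{var_convergence}: inequality \eqref{upper:ineq} is precisely $\limsup_{k\to\infty}\varphi_{\lambda_k}(x_k) \leq \varphi(x)$ for $x_k \to x$ and $\lambda_k \to 0$. For the ascent condition I would exhibit the constant recovery sequence $x_k \equiv x$: by part a) we have $\varphi_\lambda(x) \geq \varphi(x)$ for every $\lambda > 0$ (this is exactly the monotonicity/infimum statement $\varphi_{\lambda_1}(x) \geq \varphi_{\lambda_2}(x) \geq \varphi(x)$ established there), hence $\liminf_{k\to\infty}\varphi_{\lambda_k}(x) \geq \varphi(x)$ trivially. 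These two verifications together give hypo-convergence.

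For the Mosco refinement, I would keep the very same ascent sequence $x_k \equiv x$, noting that a strongly convergent sequence is in particular weakly convergent, so condition $a')$ is unaffected. The upper condition is then replaced by its weak counterpart, condition $c')$ applied to $-\varphi_{\lambda_k}$: for every $x_k \rightharpoonup x$ one needs $\limsup_{k\to\infty}\varphi_{\lambda_k}(x_k) \leq \varphi(x)$. This is exactly the second assertion of part b) of Theorem \ref{var_convergence}, which is available precisely under the additional hypothesis that $h$ is sequentially weakly continuous. Combining this with the unchanged ascent condition yields Mosco hypo-convergence.

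The argument is essentially bookkeeping, so the only real point of care — and the place where a sign error would be easy to make — is tracking the direction of the inequalities when passing from epi-convergence to hypo-convergence through negation, and confirming that part a) does furnish the pointwise lower bound $\varphi_\lambda \geq \varphi$ valid for \emph{all} $\lambda > 0$ (not merely in the limit), which is what makes the constant recovery sequence legitimate. Beyond this matching of definitions to the already-proven inequalities \eqref{upper:ineq}, there is no genuine analytic obstacle.
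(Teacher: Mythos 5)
Your proposal is correct and follows essentially the same route as the paper's own proof: the constant recovery sequence $x_k \equiv x$ combined with the pointwise bound $\varphi_\lambda \geq \varphi$ from Theorem \ref{var_convergence}~a) for the ascent condition, and the limsup inequality \eqref{upper:ineq} of part b) (in its strong and, under weak continuity of $h$, weak forms) for the upper condition. The only cosmetic difference is that the paper cites \eqref{upper:ineq2} from the proof of item c) for the strong upper bound, whereas you invoke \eqref{upper:ineq} from part b) directly, which is the same inequality.
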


\begin{proof}
The constant sequence $x_k=x$ together with the pointwise convergence in Theorem \ref{var_convergence} Item a) gives us the existence of a sequence $x_k\rightarrow x$ such that
\begin{equation*}
   \liminf_{k\rightarrow \infty}\varphi_{\lambda_k}(x_k)\geq \varphi(x).
\end{equation*}
The remaining second condition to obtain hypo-convergence follows from \eqref{upper:ineq2} obtained in the proof of Theorem \ref{var_convergence} Item c). If we suppose that the function $h$ in \eqref{scalar01} is weakly continuous, then the remaining second condition to obtain Mosco hypo-convergence is given by Theorem \ref{var_convergence} Item b). 
\smartqed
\end{proof}

\section{Differentiability and gradient formula for $\varphi_\lambda$}
\label{sec:diff}

In this section, we assume that $\mathcal{H}$ is finite-dimensional. Here we apply the results of subsection \ref{subsec:spherical_descomposition_gradient_formula} to give a formula for the gradients of our inner regularization of the probability function \eqref{Proba:funct}, and later we provide the  consistency of the gradients of our inner regularization. 

First we provide the following lemma, which shows that the gradients of $\PMoreau{\Phi}{\lambda}{h}-h$ satisfies
a growth condition.
\begin{lemma}
Let $\lambda > 0$ be given but fixed. Let $\bar x$ be a point such that $S^h_\Phi(\bar x,0) <h(\bar{x})$. Then, there exists $C_{\lambda}, \varepsilon >0$ such that 
\begin{equation}\label{thetagrowth20}
	\| \nabla_x \PMoreau{\Phi}{\lambda}{h}(x,z)-\nabla h(x) \| \leq C_{\lambda} (\| z\|+1),  \;
	\text{ for all } x \in \mathbb{B}_{\epsilon} (\bar{x})  \text{ and all } \; z\in \R^m.  
\end{equation}
\end{lemma}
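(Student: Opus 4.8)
The plan is to avoid any fine analysis of $\Phi$ and instead lean entirely on the global smoothness of the Moreau envelope. Recall from Proposition~\ref{prop:moreaumaxismaxmoreau} and the definition of $\PMoreau{\Phi}{\lambda}{h}$ that $\PMoreau{\Phi}{\lambda}{h}=\MoreauYosida{S^h_\Phi}{\lambda}$ is the Moreau envelope, taken jointly in $(x,z)$ on the finite-dimensional Hilbert space $\mathcal H\times\R^m$, of the supremum function $S^h_\Phi$. Since $\C$ is weak$^\ast$-compact and $\Phi,h$ are continuous, $v^\ast\mapsto\langle v^\ast,\Phi(x,z)\rangle$ attains its supremum over $\C$, so $S^h_\Phi$ is finite-valued; being a supremum of the jointly convex functions $\Phi_{v^\ast}^h$ in \eqref{scalar01}, it is convex, hence $S^h_\Phi\in\Gamma_0(\mathcal H\times\R^m)$ and its envelope is continuously differentiable.

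The single fact that does all the work is that the gradient of a Moreau envelope is globally Lipschitz. By \eqref{eq:derivativeofMoreau} we have $\nabla\PMoreau{\Phi}{\lambda}{h}=\tfrac1\lambda\bigl(I-\Prox{S^h_\Phi}{\lambda}\bigr)$, and since the proximal operator is the resolvent $(I+\lambda\partial S^h_\Phi)^{-1}$ of a maximal monotone operator, it is firmly nonexpansive; therefore $I-\Prox{S^h_\Phi}{\lambda}$ is nonexpansive and $\nabla\PMoreau{\Phi}{\lambda}{h}$ is $\tfrac1\lambda$-Lipschitz on all of $\mathcal H\times\R^m$.

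Next I would pass to the $\mathcal H$-block. Writing $\nabla_x$ for the $\mathcal H$-component of the joint gradient and using that a component has norm at most that of the full vector, the Lipschitz estimate applied to $(x,z)$ and the base point $(\bar x,0)$ gives
$$\|\nabla_x\PMoreau{\Phi}{\lambda}{h}(x,z)\|\le\|\nabla\PMoreau{\Phi}{\lambda}{h}(x,z)\|\le\|\nabla\PMoreau{\Phi}{\lambda}{h}(\bar x,0)\|+\tfrac1\lambda\bigl(\|x-\bar x\|+\|z\|\bigr).$$
Restricting to $x\in\mathbb{B}_\varepsilon(\bar x)$ bounds $\|x-\bar x\|$ by $\varepsilon$, so the right-hand side is affine in $\|z\|$. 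Since $h$ is continuously differentiable, $\nabla h$ is bounded on $\mathbb{B}_\varepsilon(\bar x)$, say by $M'$, and the triangle inequality yields
$$\|\nabla_x\PMoreau{\Phi}{\lambda}{h}(x,z)-\nabla h(x)\|\le\Bigl(\|\nabla\PMoreau{\Phi}{\lambda}{h}(\bar x,0)\|+\tfrac\varepsilon\lambda+M'\Bigr)+\tfrac1\lambda\|z\|\le C_\lambda(\|z\|+1)$$
for a suitable $C_\lambda>0$, which is \eqref{thetagrowth20}.

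There is no genuine obstacle once the $\tfrac1\lambda$-smoothness is invoked; the only points demanding care are verifying that $S^h_\Phi$ is finite and convex (so that the envelope and its smoothness genuinely apply) and the bookkeeping of the constant $C_\lambda$, which necessarily degenerates as $\lambda\searrow 0$. I also note that the Slater-type hypothesis $S^h_\Phi(\bar x,0)<h(\bar x)$ is not needed for the growth bound itself, which holds on any bounded neighbourhood by the above argument; it serves only to fix the base point and to guarantee, via $\PMoreau{\Phi}{\lambda}{h}(\bar x,0)\le S^h_\Phi(\bar x,0)<h(\bar x)$, the qualification required when this lemma is fed into Theorem~\ref{thm:clarkeappli}.
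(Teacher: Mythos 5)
Your proof is correct and follows essentially the same route as the paper: both rest on the identity $\nabla \PMoreau{\Phi}{\lambda}{h}=\tfrac1\lambda\bigl(I-\Prox{S^h_\Phi}{\lambda}\bigr)$ from \eqref{eq:derivativeofMoreau}, a nonexpansiveness property of the proximal operator (the paper applies nonexpansiveness of $\Prox{S^h_\Phi}{\lambda}$ against the base point $(0,0)$, you apply the equivalent $\tfrac1\lambda$-Lipschitz continuity of the envelope's gradient against $(\bar x,0)$), and local boundedness of $\nabla h$ near $\bar x$. Your side remark that the Slater-type hypothesis $S^h_\Phi(\bar x,0)<h(\bar x)$ is not actually used in the estimate, but only downstream when the lemma feeds into the gradient formula, is also consistent with the paper's proof, which never invokes it.
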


\begin{proof}
We have, by \eqref{eq:derivativeofMoreau} and the triangle inequality, that
\begin{equation*}
\| \nabla_x \PMoreau{\Phi}{\lambda}{h}(x,z)- \nabla h(x)\| \leq \frac{1}{\lambda}\left( \Vert x\Vert +\Vert \Prox{S^h_\Phi}{\lambda}(x,z)\Vert \right)+\Vert\nabla h(x)\Vert,
\end{equation*}
for all $x\in \mathcal{H}$ and all $z\in \mathbb{R}^m$. By the nonexpansiveness of the proximal mapping we get
\begin{equation*}
\| \nabla_x \PMoreau{\Phi}{\lambda}{h}(x,z)- \nabla h(x)\| \leq \frac{1}{\lambda}\left( 2\Vert x\Vert+\Vert z\Vert +\Vert \Prox{S^h_\Phi}{\lambda}(0,0)\Vert \right)+\Vert\nabla h(x)\Vert,
\end{equation*}
for all $x\in \mathcal{H}$ and all $z\in \mathbb{R}^m$. Since $\nabla h$ is locally bounded at $\bar{x}$ ($h$ is continuously differentiable), there exists $\epsilon>0$ and $M>0$ such that
\begin{equation*}
\| \nabla_x \PMoreau{\Phi}{\lambda}{h}(x,z)- \nabla h(x)\| \leq \frac{1}{\lambda}\left( 2\epsilon+2\Vert \bar{x}\Vert+\Vert z\Vert +\Vert \Prox{S^h_\Phi}{\lambda}(0,0)\Vert \right)+M,
\end{equation*}
for all $x\in \mathbb{B}_\epsilon(\bar{x})$ and all $z\in \mathbb{R}^m$. We conclude by defining
\begin{equation*}
  C_{\lambda} :=\max\{\frac{1}{\lambda}\left( 2\epsilon+2\Vert \bar{x}\Vert +\Vert \Prox{S^h_\Phi}{\lambda}(0,0)\Vert \right)+M,\frac{1}{\lambda}\}.
\end{equation*}
\smartqed
\end{proof}

In order to apply the gradient formula given in Theorem \ref{thm:clarkeappli} it will be convenient to introduce the following notation. Given a parameter $\lambda >0$, let us assume $x$ belonging to an appropriate neighbourhood of $\bar x$ such that $\PMoreau{\Phi}{\lambda}{h} (x,0) < h(x)$. Then, we define the set of finite and infinite directions for the function $\PMoreau{\Phi}{\lambda}{h}$ by 
\begin{equation}\label{flambda}
  F_\lambda(x):= \{ v\in \mathbb{S}^{m-1} : \exists r>0: \PMoreau{\Phi}{\lambda}{h} (x,rLv)=h(x) \}, \quad I_\lambda(x):=\mathbb{S}^{m-1}\backslash F_\lambda(x),
\end{equation}
respectively, and its associated radial function  given by 
\begin{equation}\label{defradialfunction}
  \rho_\lambda \left( x,v\right) :=\sup\left\{ r>0 : \PMoreau{\Phi}{\lambda}{h}(x,rLv)\leq\* h(x) \right\}
\end{equation}

\begin{remark}[Characterization of radial function]\label{remarkCharc}
It is important to recall that when $ \PMoreau{\Phi}{\lambda}{h} (x,0)< h(x)$, then the radial function $\rho_\lambda(x,v)$ can be characterized as the $\rho_\lambda(x,v) =\inf\{ r>0 :   \PMoreau{\Phi}{\lambda}{h} (x,rLv)>h(x) \}$,  with the convention $\inf \emptyset = +\infty$. Furthermore, it also can be characterized by \emph{unique solution} of the equation
\begin{align*}
   \PMoreau{\Phi}{\lambda}{h} (x,rLv)=h(x) 
\end{align*}
for any finite direction $v\in F_\lambda(x)$. We refer to \cite[Proposition 2.6]{MR4000225} for more details of the proof, which uses essentially the convexity and continuity. Nevertheless, it is clear that the continuity of the convex function is necessary, as was illustrated in \cite[Example 2.7]{MR4000225}.
\end{remark}

Finally, let us introduce the gradient-like mapping $G_\lambda : \mathcal{H}\times  \mathbb{S}^{m-1} \to  \mathcal{H}$ defined as 
\begin{equation}\label{def:GradientGe}
   \G{\lambda}(x,v)  := \left\{ \begin{array}{cc}
    - \theta(\rho_\lambda(x,v) ,v) \left( \frac{ \nabla_x \PMoreau{\Phi}{\lambda}{h} (x,\rho_\lambda(x,v)Lv)-\nabla h(x)}{ \left\langle  \nabla_z \PMoreau{\Phi}{\lambda}{h} (x,\rho_\lambda \left( x,v\right) Lv,Lv\right\rangle } \right)  &  \text{ if } v\in F_\lambda(x)\\ & \\
   0  & \text{ if } v\in I_\lambda(x)
   \end{array}  \right. 
\end{equation}
and the factor $\theta$ is defined in \eqref{transformationftorho}. Using the above notation we are able to provide a gradient formula for the probability function $\varphi_\lambda$

\begin{theorem}\label{Theo:form:gradient}
Let $\bar{x} \in \mathcal{H}$ be such that $S^h_\Phi(\bar{x},0) <h(\bar{x})$, and assume that $f_\xi$ 
	 satisfies the following growth
	 condition 	\begin{equation}
	 	\lim\limits_{\substack{\| z\|  \to +\infty } }  \| z\|^{m+1}  {f}_{\xi} (z) =0. \label{eq:assgrowth}
	 \end{equation}
	 
Then, for any given $\lambda > 0$, the probability function $\varphi_\lambda$, defined in \eqref{Proba:funct:lamb}, is continuously differentiable on an appropriate neighbourhood $U$ of
	 $\bar x$ and it holds:
	 \begin{equation*}
	 	\nabla \varphi_\lambda (x)=
	 	\int\limits_{ \mathbb{S}^{m-1}} \G{\lambda}(x,v)      d\mu_\zeta(v), \:\text{ for all } x\in U,
	 \end{equation*}
where $G_{\lambda}$ is as in \eqref{def:GradientGe}. Moreover, the gradients of $\PMoreau{\Phi}{\lambda}{h} $ can be computed by the formula
	 \begin{align}\label{compgrad}
	 \nabla 	\PMoreau{\Phi}{\lambda}{h} (x,z) &= \frac{ (x,z)- \Prox{(\langle v^\ast,\Phi \rangle+h)}{\lambda}(x,z) }{\lambda}, 
\end{align}
where $v^\ast $ is any active vector at $(x,z)$, that is, $ v^\ast \in\C$ and $ \MoreauYosida{\Phi^h_{v^\ast}}{\lambda}(x,z) =\PMoreau{\Phi}{\lambda}{h}(x,z)$ in view of Proposition \ref{prop:moreaumaxismaxmoreau}.
\end{theorem}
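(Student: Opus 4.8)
The plan is to recognize $\varphi_\lambda$ as an ordinary probability function of the form \eqref{probfunc:g} and then invoke Theorem \ref{thm:clarkeappli}. Concretely, I would set
\[
   g(x,z) := \PMoreau{\Phi}{\lambda}{h}(x,z) - h(x) = \MoreauYosida{S^h_\Phi}{\lambda}(x,z) - h(x),
\]
so that, by \eqref{Proba:funct:lamb}, $\varphi_\lambda(x) = \mathbb{P}(g(x,\xi)\leq 0)$. The structural hypotheses of Theorem \ref{thm:clarkeappli} are then checked directly: since $S^h_\Phi$ is jointly convex (a supremum of functions convex in $(x,z)$ by \eqref{scalar01}) and lower semicontinuous, its Moreau envelope is finite-valued, jointly convex and continuously differentiable with gradient given by \eqref{eq:derivativeofMoreau}; subtracting the $C^1$ function $h(x)$ keeps $g$ continuously differentiable and convex in the variable $z$. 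The qualification $g(\bar x,0)<0$ follows from the standing assumption $S^h_\Phi(\bar x,0)<h(\bar x)$ together with the envelope inequality $\PMoreau{\Phi}{\lambda}{h}(\bar x,0)\leq S^h_\Phi(\bar x,0)$.

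The main obstacle --- and the only genuinely analytic step --- is verifying the $\eta_\theta$-growth condition for $g$ at every $\bar v$, which is where the density decay \eqref{eq:assgrowth} enters. I would take
\[
   \eta_\theta(r,v) := C_\lambda\bigl(\norm{rLv}+1\bigr),
\]
with $C_\lambda$ the constant from \eqref{thetagrowth20}; then \eqref{thetagrowth20} gives precisely $\norm{\nabla_x g(x,rLv)}\leq \eta_\theta(r,v)$ for $x\in\mathbb{B}_{\epsilon}(\bar x)$ and all $r\geq 0$, so the growth constant in the definition may be taken equal to $1$. It remains to check the limit \eqref{thetagrowth}. Using the definition \eqref{transformationftorho} of $\theta$ and writing $z=rLv$, one computes
\[
   r\,\theta(r,v)\,\eta_\theta(r,v)
   = c\,C_\lambda\, r^{m}\bigl(\norm{rLv}+1\bigr) f_\xi(rLv)
   \leq c'\, \norm{z}^{m+1} f_\xi(z)
\]
for large $r$, where I use that $L$ is invertible, so $\norm{Lv}$ is bounded away from $0$ on $\mathbb{S}^{m-1}$; hence $\norm{z}=r\norm{Lv}\to\infty$ uniformly in $v$ as $r\to\infty$ and $r^{m+1}$ is controlled by $\norm{z}^{m+1}$. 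Assumption \eqref{eq:assgrowth} then forces $r\,\theta(r,v)\,\eta_\theta(r,v)\to 0$, giving \eqref{thetagrowth}.

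With the hypotheses in place, Theorem \ref{thm:clarkeappli} applies and yields the continuous differentiability of $\varphi_\lambda$ on a neighbourhood $U$ of $\bar x$ together with the integral formula. To obtain the stated expression it suffices to identify the objects attached to $g$ with those of \eqref{flambda}--\eqref{def:GradientGe}: since $g(x,rLv)\leq 0 \Leftrightarrow \PMoreau{\Phi}{\lambda}{h}(x,rLv)\leq h(x)$, the radial function of $g$ is $\rho_\lambda$ and its finite and infinite direction sets are $F_\lambda(x)$ and $I_\lambda(x)$; substituting $\nabla_x g = \nabla_x\PMoreau{\Phi}{\lambda}{h}-\nabla h$ and $\nabla_z g=\nabla_z\PMoreau{\Phi}{\lambda}{h}$ into the formula of Theorem \ref{thm:clarkeappli} reproduces $\G{\lambda}$ as in \eqref{def:GradientGe}. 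Finally, for the pointwise formula \eqref{compgrad} I would use Proposition \ref{prop:moreaumaxismaxmoreau}, which writes $\PMoreau{\Phi}{\lambda}{h}(x,z)=\max_{v^\ast\in\C}\MoreauYosida{\Phi_{v^\ast}^h}{\lambda}(x,z)$. For an active $v^\ast$ the difference $\PMoreau{\Phi}{\lambda}{h}-\MoreauYosida{\Phi_{v^\ast}^h}{\lambda}$ is nonnegative, equals $0$ at $(x,z)$, and is differentiable there (both terms being Moreau envelopes of convex functions); hence its gradient vanishes at $(x,z)$ and $\nabla\PMoreau{\Phi}{\lambda}{h}(x,z)=\nabla\MoreauYosida{\Phi_{v^\ast}^h}{\lambda}(x,z)$. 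Applying \eqref{eq:derivativeofMoreau} to $\MoreauYosida{\Phi_{v^\ast}^h}{\lambda}$ and recalling $\Phi_{v^\ast}^h=\langle v^\ast,\Phi\rangle+h$ gives \eqref{compgrad}.
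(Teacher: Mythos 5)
Your proposal is correct, and its core follows exactly the paper's own route: reduce $\varphi_\lambda$ to the scalar form \eqref{probfunc:g} with $g=\PMoreau{\Phi}{\lambda}{h}-h$, verify the qualification $g(\bar x,0)<0$ via the envelope inequality, and establish the $\eta_\theta$-growth condition by taking $\eta_\theta(r,v)=C_\lambda(r\Vert Lv\Vert+1)$ from the bound \eqref{thetagrowth20} and driving the limit \eqref{thetagrowth} with the density decay \eqref{eq:assgrowth} (your estimate using $\Vert Lv\Vert$ bounded below on $\mathbb{S}^{m-1}$ is the same computation the paper performs after factoring out $\Vert rLv\Vert^{m+1}f_\xi(rLv)$). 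The one place you genuinely diverge is the proof of \eqref{compgrad}: the paper simply cites an external result (Theorem 3.5 of the reference Perez-Vilches2021), whereas you derive it directly --- since $\PMoreau{\Phi}{\lambda}{h}-\MoreauYosida{\Phi_{v^\ast}^h}{\lambda}$ is nonnegative, differentiable (both being Moreau envelopes of functions in $\Gamma_0$, using that $\mathcal{H}$ is finite-dimensional so the convex finite-valued $\Phi_{v^\ast}^h$ is continuous), and vanishes at the active point $(x,z)$, its gradient vanishes there, so $\nabla\PMoreau{\Phi}{\lambda}{h}(x,z)=\nabla\MoreauYosida{\Phi_{v^\ast}^h}{\lambda}(x,z)$ and \eqref{eq:derivativeofMoreau} finishes the job. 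This stationarity-of-the-difference argument is valid and has the advantage of making the theorem self-contained, at the cost of re-proving a known envelope-of-maximum gradient identity; the paper's citation buys brevity and, in the cited source, a statement valid in greater generality.
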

\begin{proof}
Let $\lambda > 0$ be given but fixed. Due to Proposition \ref{basicprop}, $\MoreauYosida{\Phi^h}{\lambda}\leq S^h_\Phi$. Thus, $\MoreauYosida{\Phi^h}{\lambda}(\bar{x},0)<h(\bar{x})$. Therefore, due to continuity, we can set aside an appropriate neighbourhood $U$ of $\bar x$ on which this continues to hold and on which the objects in equations \eqref{flambda}, \eqref{defradialfunction} and \eqref{def:GradientGe} are well defined. This neighbourhood can be taken independently of $\lambda > 0$.

Thus to prove the first part, by Theorem \ref{thm:clarkeappli}, it is enough to prove the $\eta_\theta$-growth condition. To this end let us pick an abitrary $\bar v \in I_{\lambda}(\bar x)$. 
In view of \eqref{thetagrowth20}, we define
\begin{equation*}
   \eta_{\theta}(r,v):=C_{\lambda}\left( r\Vert Lv\Vert+1 \right),
\end{equation*}
choose $l\geq 1/\epsilon$ and notice, by \eqref{transformationftorho}, that 
\begin{align*}
  r\theta(r,v)\eta_{\theta}(r,v)
  &=\frac{2\pi^{\frac{m}{2}} |\textnormal{det}(L)|
	}{\Gamma(\frac{m}{2})} C_{\lambda}(r^{m+1}\Vert Lv\Vert f_\xi (rLv)+r^{m}f_\xi (rLv))\\
	&=\frac{2\pi^{\frac{m}{2}} |\textnormal{det}(L)|
	}{\Gamma(\frac{m}{2})} C_{\lambda} \Vert r Lv\Vert^{m+1}f_\xi (rLv) \left ( \frac{1}{\Vert Lv\Vert^{m}} + \frac{1}{r \Vert Lv\Vert^{m+1}}  \right ) 
	\xrightarrow[\substack{r \to +\infty \\
				v \to \bar{v} }]{}0,
\end{align*}
where the last limit follows from assumption \eqref{eq:assgrowth}. Therefore, as a result of \eqref{thetagrowth}, the $\eta_\theta$-growth condition is satisfied. The computation for the gradient \eqref{compgrad} follows from \cite[Theorem 3.5]{Perez-Vilches2021}.
\smartqed
\end{proof}
The so-called radial function $\rho_\lambda$ is used in the last gradient formula. The following proposition shows that this mapping is continuous on the three parameters $(\lambda,x,v)$, which is a key property for numerical computations, and to provide the asymptotic behavior of the gradients of the probability function $\varphi_\lambda$ to the (sub-)gradients to the nominal function $\varphi$.

\begin{proposition}\label{continuity_rho}
Let us consider the radial function in \eqref{defradialfunction} and the open set defined by $U:=\{ x \in \mathcal{H}: S_\Phi^h(x,0) < h(x)\} $. Then, for every sequence $(\lambda_k,x_k,v_k) \to (\lambda,x,v) \in [0,+\infty) \times U \times \mathbb{S}^{m-1}$ we have that $\rho_{\lambda_k}(x_k,v_k) \to \rho_{\lambda}(x,v)$, where $\rho_0$ is defined by $ \rho_0(x,v):=  \sup\left\{ r>0 : S_\Phi(x,rLv)\leq h(x) \right\}$. 
\end{proposition}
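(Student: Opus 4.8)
The plan is to reduce the whole statement to the joint continuity of the single map
\[
H(\lambda,x,z):=\PMoreau{\Phi}{\lambda}{h}(x,z)\ \text{ for }\lambda>0,\qquad H(0,x,z):=S^h_\Phi(x,z),
\]
on $[0,+\infty)\times\mathcal{H}\times\R^m$, and then to use the convexity of $r\mapsto H(\lambda,x,rLv)$ to locate the level $h(x)$ in a stable way. First I would establish that $H$ is jointly continuous. For $\lambda>0$ this is the (standard) joint continuity of the Moreau envelope in its parameter and its argument; for the boundary case, given $\lambda_k\searrow 0$ and $(x_k,z_k)\to(x,z)$, it is exactly the continuous-convergence statement of Proposition \ref{basicprop}, item d), applied on the product Hilbert space $\mathcal{H}\times\R^m$ to the convex lsc function $S^h_\Phi$, which yields $\MoreauYosida{S^h_\Phi}{\lambda_k}(x_k,z_k)\to S^h_\Phi(x,z)$.

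Next I would fix a convergent sequence $(\lambda_k,x_k,v_k)\to(\lambda,x,v)$ with $x\in U$ and check that every $\rho_k:=\rho_{\lambda_k}(x_k,v_k)$ is well defined. Indeed $x\in U$ means $S^h_\Phi(x,0)<h(x)$, i.e. $H(0,x,0)<h(x)$, and since the Moreau envelope lies below its function (Proposition \ref{basicprop}, item a), $\PMoreau{\Phi}{\lambda}{h}\leq S^h_\Phi$) we get $H(\lambda,x,0)<h(x)$ for every $\lambda\geq 0$; by joint continuity of $H$ and continuity of $h$, also $H(\lambda_k,x_k,0)<h(x_k)$ for $k$ large. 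Because $H(\lambda,x,\cdot)$ is convex (the Moreau envelope of a convex function is convex, and $S^h_\Phi$ is convex by \eqref{scalar01}) and $r\mapsto rLv$ is affine, the function $r\mapsto H(\lambda,x,rLv)$ is finite convex on $[0,+\infty)$ and starts strictly below the level $h(x)$. Writing $\rho^\ast:=\rho_\lambda(x,v)$, convexity then forces $H(\lambda,x,rLv)<h(x)$ for every $r<\rho^\ast$ and $H(\lambda,x,rLv)>h(x)$ for every $r>\rho^\ast$ (with the first alternative holding for all $r>0$ when $\rho^\ast=+\infty$); this is precisely the characterization recalled in Remark \ref{remarkCharc}.

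Now I would squeeze $\rho_k$ from both sides using joint continuity. For the lower bound, pick any $r<\rho^\ast$: then $H(\lambda,x,rLv)<h(x)$ strictly, so $H(\lambda_k,x_k,rLv_k)<h(x_k)$ for $k$ large, whence $\rho_k\geq r$ and $\liminf_k\rho_k\geq r$; letting $r\uparrow\rho^\ast$ gives $\liminf_k\rho_k\geq\rho^\ast$. For the upper bound, needed only when $\rho^\ast<+\infty$, pick any $r>\rho^\ast$: then $H(\lambda,x,rLv)>h(x)$ strictly, so $H(\lambda_k,x_k,rLv_k)>h(x_k)$ for $k$ large, whence $\rho_k\leq r$ and $\limsup_k\rho_k\leq r$; letting $r\downarrow\rho^\ast$ gives $\limsup_k\rho_k\leq\rho^\ast$. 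Combining the two bounds yields $\rho_k\to\rho^\ast=\rho_\lambda(x,v)$, while when $\rho^\ast=+\infty$ the lower-bound argument alone already forces $\rho_k\to+\infty$.

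The main obstacle is the joint continuity of $H$ across the boundary $\{\lambda=0\}$: away from it everything is classical, but the passage $\lambda_k\searrow 0$ must combine the convergence of the Moreau envelope with the simultaneous perturbation of the base point $(x_k,rLv_k)$, which is exactly why item d) of Proposition \ref{basicprop} (continuous, not merely pointwise, convergence) is the crucial ingredient. A secondary point requiring care is the infinite-direction case $\rho^\ast=+\infty$, where only the lower squeeze is available and one must verify that the strict sublevel inequality $H(\lambda,x,rLv)<h(x)$ persists for all $r>0$; this again follows from convexity together with $H(\lambda,x,0)<h(x)$, so no separate argument for infinite directions is actually needed.
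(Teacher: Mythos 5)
Your proof is correct, and it rests on the same two pillars as the paper's argument --- continuous convergence of Moreau envelopes (Proposition \ref{basicprop}, item d)) and the convexity/uniqueness characterization of the radial function (Remark \ref{remarkCharc}) --- but the bookkeeping is genuinely different. The paper works on the sequence $\rho_k:=\rho_{\lambda_k}(x_k,v_k)$ itself: for a finite cluster point $r$ it passes to the limit in the defining equation $\PMoreau{\Phi}{\lambda_k}{h}(x_k,\rho_k Lv_k)=h(x_k)$ along the \emph{moving} points $(x_k,\rho_k Lv_k)\to(x,rLv)$ and concludes $r=\rho_\lambda(x,v)$ by uniqueness of the root; the case $\rho_k\to+\infty$ is handled by a separate contradiction, which is in substance your upper-bound step carried out at a single radius beyond $\rho_\lambda(x,v)$. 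You never evaluate along moving radii: you fix $r$ strictly below or above $\rho^\ast$, use the strict sign of $H(\lambda,x,rLv)-h(x)$ furnished by convexity, transfer that sign to $(\lambda_k,x_k,v_k)$ by joint continuity, and squeeze. Your route buys a uniform treatment of finite and infinite $\rho^\ast$ with no subsequence extraction, and it quietly repairs a small imprecision in the paper: knowing that every finite cluster point equals $\rho_\lambda(x,v)$ does not by itself yield convergence --- one must also exclude unbounded subsequences, a case the paper's dichotomy (``has a cluster point'' versus ``tends to $+\infty$'') does not spell out, whereas your upper bound handles it automatically. The one point you should flesh out is the joint continuity of $(\lambda,x,z)\mapsto\PMoreau{\Phi}{\lambda}{h}(x,z)$ when the limit parameter satisfies $\lambda>0$: it is standard, but it is not literally Proposition \ref{basicprop}, item d); since the paper dismisses the same issue with ``the proof for $\lambda>0$ is analogous'', this is a presentational rather than a mathematical gap.
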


\begin{proof}
Let us focus on the case $(\lambda_k,x_k,v_k) \to (0,x,v)$ since the proof for $\lambda>0$ is analogous. Let us first assume that the sequence $\{\rho_{\lambda_k}(x_k,v_k)\}$ admits a cluster point called $r$. Then for some subsequence of $\{\rho_{\lambda_k}(x_k,v_k)\}$ we have $\rho_{\lambda_k}(x_{k_l},v_{k_l})\to_l r$. By Proposition \ref{basicprop} Item d), continuity of $h$ and again by the characterization of the radial function as unique solution (see Remark \ref{remarkCharc}) we have that
\begin{equation*}
  0=\PMoreau{\Phi}{\lambda_{k_l}}{h}(x_{k_l},\rho_{\lambda_k}(x_{k_l},v_{k_l})Lv_{k_l})-h(x_{k_l})\to_l S_\Phi^h(x,rLv)-h(x), 
\end{equation*}
then  $r=\rho_0(x,v)$. Since this holds true for all possible cluster points, we have in fact that $\rho_{\lambda_k}(x_k,v_k)$ converges to $\rho(x,v)$, whenever the sequence $(\rho_{\lambda_k}(x_k,v_k))_{k\in \mathbb{N}}$ has a cluster point.

Next let us assume that, $\rho_{\lambda_k}(x_k,v_k) \to +\infty$, and by contradiction suppose that $r:=\rho(x,v) <+\infty$. 

Then, by Proposition \ref{basicprop} Item d), we have that for all large enough $k$ 
$$ 0< \PMoreau{\Phi}{\lambda_{k}}{h}(x_{k},(r+1)Lv_{k})-h(x_{k}), $$
which implies that $\rho_{\lambda_k} (x_{k},v_k) < (r+1)$ for all large enough $k$ (see Remark \ref{remarkCharc}), which contradicts our assumption, and concludes the proof. 
\smartqed
\end{proof}

The next proposition shows that the radial function $\rho_\lambda$, given in \eqref{defradialfunction}, can be computed using the associated radial function to the function $\MoreauYosida{\Phi^h_{v^\ast}}{\lambda}$, defined in \eqref{scalar01}, that is, for a given $v^\ast \in \C$, and $\lambda>0$ we set
\begin{align*}
   \rho_\lambda^{v^\ast} (x,v) := \sup\{ r>0 : \MoreauYosida{\Phi_{v^\ast}^h }{\lambda}(x,rLv) \leq h(x)\}.
\end{align*}
\begin{proposition}
In the setting of Proposition \ref{continuity_rho}, we have that 
\begin{align*}
   \rho_\lambda (x,v) :=\min\{  \rho_\lambda^{v^\ast} (x,v) : v^\ast \in \C\}.
\end{align*}
\end{proposition}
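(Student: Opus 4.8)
The strategy is to combine the pointwise max-formula of Proposition \ref{prop:moreaumaxismaxmoreau} with the ``unique solution'' characterization of the radial function recalled in Remark \ref{remarkCharc}. Throughout, fix $(\lambda,x,v)$ with $x\in U$, so that $S^h_\Phi(x,0)<h(x)$. Since $\MoreauYosida{\Phi_{v^\ast}^h}{\lambda}\leq \Phi_{v^\ast}^h \leq S^h_\Phi$ pointwise, we obtain both $\PMoreau{\Phi}{\lambda}{h}(x,0)<h(x)$ and $\MoreauYosida{\Phi_{v^\ast}^h}{\lambda}(x,0)<h(x)$ for every $v^\ast\in\C$, so that the characterization of Remark \ref{remarkCharc} (strict inequality at $r=0$, convexity and continuity in $r$) applies to $\PMoreau{\Phi}{\lambda}{h}$ as well as to each $\MoreauYosida{\Phi_{v^\ast}^h}{\lambda}$.

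First I would establish the easy inequality $\rho_\lambda(x,v)\leq \rho_\lambda^{v^\ast}(x,v)$ for every $v^\ast\in\C$. This is immediate from $\MoreauYosida{\Phi_{v^\ast}^h}{\lambda}(x,z)\leq \PMoreau{\Phi}{\lambda}{h}(x,z)$ (Proposition \ref{prop:moreaumaxismaxmoreau}), which yields the inclusion $\{r>0:\PMoreau{\Phi}{\lambda}{h}(x,rLv)\leq h(x)\}\subseteq\{r>0:\MoreauYosida{\Phi_{v^\ast}^h}{\lambda}(x,rLv)\leq h(x)\}$; taking suprema gives the claim, and hence $\rho_\lambda(x,v)\leq \inf\{\rho_\lambda^{v^\ast}(x,v):v^\ast\in\C\}$.

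For the reverse inequality together with the attainment of the minimum, I would distinguish two cases. If $v\in I_\lambda(x)$, i.e.\ $\rho_\lambda(x,v)=+\infty$, then the previous inequality forces $\rho_\lambda^{v^\ast}(x,v)=+\infty$ for every $v^\ast$, and nothing remains to prove. If $\rho_\lambda(x,v)=:r_0<+\infty$, then by Remark \ref{remarkCharc} we have $\PMoreau{\Phi}{\lambda}{h}(x,r_0Lv)=h(x)$. Invoking the max-representation of Proposition \ref{prop:moreaumaxismaxmoreau} and the (weak$^\ast$-)compactness of $\C$, the maximum is attained: there exists an active vector $\bar v^\ast\in\C$ with $\MoreauYosida{\Phi_{\bar v^\ast}^h}{\lambda}(x,r_0Lv)=\PMoreau{\Phi}{\lambda}{h}(x,r_0Lv)=h(x)$. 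Applying the unique-solution characterization of Remark \ref{remarkCharc} to the convex continuous map $r\mapsto \MoreauYosida{\Phi_{\bar v^\ast}^h}{\lambda}(x,rLv)$ (which starts strictly below $h(x)$ at $r=0$) then gives $\rho_\lambda^{\bar v^\ast}(x,v)=r_0=\rho_\lambda(x,v)$. Combined with the first step, this shows that the infimum is attained and equals $\rho_\lambda(x,v)$, proving $\rho_\lambda(x,v)=\min\{\rho_\lambda^{v^\ast}(x,v):v^\ast\in\C\}$.

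The main obstacle is precisely the attainment of the minimum, and this is where compactness of $\C$ enters: merely intersecting the sublevel intervals $[0,\rho_\lambda^{v^\ast}(x,v)]$ only delivers an infimum, so one must produce the specific active vector $\bar v^\ast$ realizing the boundary value $h(x)$ at $r_0$. The two points I would be careful about are that the unique-solution characterization of Remark \ref{remarkCharc} genuinely requires the strict inequality at $r=0$ (secured here by $x\in U$ together with $\MoreauYosida{\Phi_{v^\ast}^h}{\lambda}\leq S^h_\Phi$), and that the existence of $\bar v^\ast$ rests on the continuity of $v^\ast\mapsto \MoreauYosida{\Phi_{v^\ast}^h}{\lambda}(x,z)$ on the compact set $\C$, the very property already underlying Proposition \ref{prop:moreaumaxismaxmoreau}.
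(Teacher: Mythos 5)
Your proof is correct and is, in substance, exactly the argument the paper appeals to: the paper's own proof is a one-line citation to \cite[Proposition 2.6]{vanAckooij_Perez-Aros_2019}, noting that it ``only uses the supremum structure of the function,'' which is precisely what you exploit via Proposition \ref{prop:moreaumaxismaxmoreau} and Remark \ref{remarkCharc}. Your two steps --- the sublevel-set inclusion giving $\rho_\lambda(x,v) \leq \rho_\lambda^{v^\ast}(x,v)$ for every $v^\ast \in \C$, and the attainment of the minimum via an active $\bar v^\ast$ at the crossing radius $r_0$ combined with the unique-solution characterization for the convex scalarization --- constitute a self-contained version of that cited argument, including the correct handling of the case $\rho_\lambda(x,v)=+\infty$.
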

\begin{proof}
The proof follows the same lines of arguments that \cite[Proposition 2.6]{vanAckooij_Perez-Aros_2019}, which only uses the supremum structure of the function. 
\smartqed
\end{proof}

Now, we focus on well-possness of the gradient approximation, that is,  the study of convergence properties of the gradients of the regularized probability functions $\varphi_\lambda$. Since, the probability function $\varphi$ is not necessarily smooth, it is necessary to introduce some terminology from generalized differentiation theory.

Let us recall that for a given function $f:\mathcal{H} \to\mathbb{R} $, the set
\begin{align*}
   \pfrech f(x):=& \bigg\{ x^*\in \mathcal{H} : \liminf\limits_{h \to 0} \frac{f(x+h) -f(x) - \langle x^*, h\rangle}{\| h\|} \geq 0 \bigg\}
\end{align*}
is called the  regular  subdifferential  of $f$ at $x$. 
The basic subdifferential can
be defined as (see e.g. \cite{MR1491362,MR2191744,MR3823783})
\begin{align*}
\pmord f(x)& := \{ x^\ast \in \mathcal{H}  : x_k^* \in \pfrech f(x_k), \text{ and } (x_k,f(x_k),x_k^\ast ) \to (x,f(x),x^\ast )   \},
\end{align*}
 The following proposition provides a (sub-)differential variational principle for the probability function $\varphi$ using the inner regularized functions $\varphi_\lambda$.
\begin{proposition}\label{01_estimation}
Under the assumption of Theorem \ref{Theo:form:gradient} we have that for every $x^\ast \in  \pfrech \varphi(\bar x) $ and every $\epsilon >0$ there exists $\lambda >0$, $x_\lambda \in \mathcal{H}$ such that $\| \bar x - x_\lambda\| + \| x^\ast - \nabla \varphi_\lambda (x_\lambda) \| + |\varphi(\bar x) - \varphi(x_\lambda)| \leq \epsilon $. Particularly, we have that 
$ \pmord \varphi(\bar x) \subseteq \limsup_{x \to \bar{x},\; \lambda \to 0^+} \{ \nabla \varphi_\lambda(x) \}$.
\end{proposition}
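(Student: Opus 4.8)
The plan is to exploit two facts established earlier: that $\varphi_\lambda$ is continuously differentiable near $\bar x$ (Theorem \ref{Theo:form:gradient}), and that, because $\bar x$ lies in the open set $U:=\{x:S^h_\Phi(x,0)<h(x)\}\subseteq\D$, the family $\varphi_\lambda$ converges \emph{continuously} to $\varphi$ on a neighbourhood of $\bar x$ and $\varphi$ is continuous there (Theorem \ref{var_convergence}, items c) and e)). The idea is to turn a regular subgradient of the limit $\varphi$ into vanishing gradients of the smooth approximations $\varphi_\lambda$ by a perturbed-minimization argument, and then obtain the limiting subdifferential inclusion by a diagonal construction.

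First I would invoke the smooth variational description of regular subgradients (e.g. Rockafellar--Wets): since $x^\ast\in\pfrech\varphi(\bar x)$ and $\varphi$ is finite and continuous near $\bar x$, there is a $C^1$ function $g$ on a neighbourhood of $\bar x$ with $g\leq\varphi$ locally, $g(\bar x)=\varphi(\bar x)$ and $\nabla g(\bar x)=x^\ast$. Setting $\tilde g(x):=g(x)-\norm{x-\bar x}^2$, the function $f:=\varphi-\tilde g$ satisfies $f(x)=(\varphi(x)-g(x))+\norm{x-\bar x}^2\geq\norm{x-\bar x}^2$ on a small closed ball $\bar{\mathbb{B}}:=\{x:\norm{x-\bar x}\leq\delta\}$ chosen inside both $\D$ and the ($\lambda$-independent) differentiability neighbourhood of Theorem \ref{Theo:form:gradient}, with equality only at $\bar x$. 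Thus $\bar x$ is the unique global minimizer of $f$ over $\bar{\mathbb{B}}$ with quadratic growth, hence a strong minimum in the sense defined in the preliminaries, while $\nabla\tilde g(\bar x)=x^\ast$.

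Next I would transfer the minimum to the approximations. Put $f_\lambda:=\varphi_\lambda-\tilde g$. As $\varphi_\lambda\to\varphi$ continuously on $\bar{\mathbb{B}}$ and $\tilde g$ is continuous, $f_\lambda\to f$ continuously; since the limit is continuous and $\bar{\mathbb{B}}$ is compact, this upgrades to \emph{uniform} convergence $\epsilon_\lambda:=\sup_{\bar{\mathbb{B}}}|f_\lambda-f|\to 0$ (a standard fact: continuous convergence to a continuous limit on a compact set is uniform). Let $x_\lambda\in\operatorname{argmin}_{\bar{\mathbb{B}}}f_\lambda$, which exists by compactness. Then $f(x_\lambda)\leq f_\lambda(x_\lambda)+\epsilon_\lambda\leq f_\lambda(\bar x)+\epsilon_\lambda\leq f(\bar x)+2\epsilon_\lambda$, so $(x_\lambda)$ is a minimizing sequence for $f$, and the strong minimum forces $x_\lambda\to\bar x$. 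In particular $x_\lambda$ is eventually interior to $\bar{\mathbb{B}}$, where $f_\lambda=\varphi_\lambda-\tilde g$ is $C^1$, so the first-order condition gives $\nabla\varphi_\lambda(x_\lambda)=\nabla\tilde g(x_\lambda)$. Letting $\lambda\to 0^+$, continuity of $\nabla\tilde g$ yields $\nabla\varphi_\lambda(x_\lambda)\to\nabla\tilde g(\bar x)=x^\ast$, while $\varphi(x_\lambda)\to\varphi(\bar x)$ by continuity of $\varphi$ on $\D$. Hence $\norm{\bar x-x_\lambda}+\norm{x^\ast-\nabla\varphi_\lambda(x_\lambda)}+|\varphi(\bar x)-\varphi(x_\lambda)|\to 0$, and since every sufficiently small $\lambda$ works, choosing $\lambda$ small enough (indeed arbitrarily small) gives the first claim.

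Finally, the inclusion $\pmord\varphi(\bar x)\subseteq\limsup_{x\to\bar x,\ \lambda\to 0^+}\{\nabla\varphi_\lambda(x)\}$ follows by a diagonal argument. Given $x^\ast\in\pmord\varphi(\bar x)$, take $x_k\to\bar x$ with $\varphi(x_k)\to\varphi(\bar x)$ and $x_k^\ast\in\pfrech\varphi(x_k)$, $x_k^\ast\to x^\ast$. Since $U$ is open and contains $\bar x$, it contains $x_k$ for large $k$, so $S^h_\Phi(x_k,0)<h(x_k)$ and the first part applies at each such $x_k$ with tolerance $1/k$ and with $\lambda_k<1/k$, producing $y_k$ with $\norm{y_k-x_k}\to 0$ and $\norm{x_k^\ast-\nabla\varphi_{\lambda_k}(y_k)}\to 0$. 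Then $y_k\to\bar x$, $\lambda_k\to 0^+$ and $\nabla\varphi_{\lambda_k}(y_k)\to x^\ast$, which is exactly the required inclusion. The main obstacle is the transfer step: guaranteeing that the minimizers of the smooth perturbations $f_\lambda$ converge to $\bar x$ so that interior first-order conditions can be invoked. This is precisely why the strong-minimum construction (the extra $\norm{\cdot-\bar x}^2$ making $\bar x$ a quadratic-growth minimizer) together with continuous/uniform convergence on a compact neighbourhood inside $\D$ are both needed.
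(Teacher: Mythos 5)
Your proof is correct, and it takes a genuinely different route from the paper on the key step. The paper disposes of the first claim in one line by citing an external result, \cite[Lemma 2.1]{MR4000225}, which is exactly an approximation statement of this type for continuously convergent families of smooth functions; you instead reconstruct that lemma from scratch: the smooth variational description of regular subgradients (valid here because Section \ref{sec:diff} assumes $\mathcal{H}$ finite-dimensional, which is also what lets you use compactness of the closed ball), the quadratic penalty $-\norm{x-\bar x}^2$ turning $\bar x$ into a strong minimizer of $\varphi-\tilde g$, the upgrade of the continuous convergence $\varphi_\lambda\to\varphi$ on $\D$ (Theorem \ref{var_convergence}, item c)) to uniform convergence on the compact ball, and the transfer of minimizers followed by the interior first-order condition $\nabla\varphi_\lambda(x_\lambda)=\nabla\tilde g(x_\lambda)$. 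All the ingredients you invoke are actually available in the paper: the differentiability neighbourhood in Theorem \ref{Theo:form:gradient} can indeed be taken independent of $\lambda$ (this is stated explicitly inside its proof), and $\bar x\in\D$ gives continuity of $\varphi$ there (item e)), which yields the $|\varphi(\bar x)-\varphi(x_\lambda)|$ term. For the second claim your diagonal argument coincides with the paper's; in fact you are more careful on one point: the first claim as literally stated does not force $\lambda$ to be small, but your construction produces admissible points for \emph{every} sufficiently small $\lambda$, which is what legitimizes choosing $\lambda_k<1/k$ so that the $\limsup$ over $\lambda\to 0^+$ is genuinely attained. What the paper's route buys is brevity; what yours buys is a self-contained argument that makes visible exactly where finite-dimensionality, the $\lambda$-uniform differentiability neighbourhood, and continuity on $\D$ enter.
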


\begin{proof}
The first part follows from a direct application of \cite[Lemma 2.1]{MR4000225}. For the second part, consider a point $x^\ast  \in \pmord \varphi(\bar x)$, by definition there are $x_k^\ast \in \pfrech \varphi(x_k)$ with $x_k \to \bar x$, $\varphi(x_k) \to \varphi(\bar x)$ and $x_k^\ast \to x^\ast$. By the last part applied to $x_k$ (for large enough $k$) we have that each $x^\ast_k$ can be approximated by gradients of the probability functions $\varphi_\lambda$, which by a classical diagonal argument shows the desire inclusion.
\smartqed
\end{proof}
The last result shows that the basic subdifferential of the probability function $\varphi$ can be upper-estimated by using the gradients of the probability function $\varphi_\lambda$. In the rest of this subsection, we will focus on providing the opposite inclusion, that is to say,  the accumulations points of gradients are points in the basic subdifferential.  
\begin{lemma}\label{Prox_growth_condition}
Let us suppose the mapping $S_\Phi^h $ defined in \eqref{supfunctionS} is bounded from below by an affine linear function $\bar{h}:\mathcal{H}\to\R$, let $\bar{x}\in \mathcal{H}$ such that $S_{\Phi}^h(\bar{x},0) <h(\bar{x})$. Given $\epsilon>0$, there exists $\lambda_0,\epsilon_0 >0$ such that for all $(\lambda,x,v)\in (0,\lambda_0)\times \mathbb{B}_{\epsilon_0 }(\bar{x})\times\mathbb{S}^{m-1}$ with $v\in F_\lambda(x)$
\begin{equation}\label{Inq}
\| x - \hat{x}\| +  \|\rho_{\lambda}(x,v) Lv - \hat{z}     \|\leq  \epsilon,
\end{equation}
where $(\hat{x},\hat{z}):=\Prox{S_\Phi^h}{\lambda}{(x,\rho_{\lambda}(x,v)Lv)}$.
\end{lemma}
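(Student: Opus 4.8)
The plan is to reduce everything to a uniform bound on the length of the proximal displacement and then extract it from the defining equation of $\rho_\lambda$ together with the affine lower bound. Write $g:=S_\Phi^h$ and $w:=(x,\rho_\lambda(x,v)Lv)$, so that $(\hat x,\hat z)=\Prox{g}{\lambda}(w)$. Since $\|x-\hat x\|+\|\rho_\lambda(x,v)Lv-\hat z\|\le \sqrt2\,\|w-(\hat x,\hat z)\|$, it suffices to produce $\lambda_0,\epsilon_0>0$ with $\|w-(\hat x,\hat z)\|\le \epsilon/\sqrt2$ on the prescribed range. First I would fix $\epsilon_0$ so that the radial function is governed by Remark \ref{remarkCharc}: because $g(\bar x,0)<h(\bar x)$ and $g,h$ are continuous, one can choose $\epsilon_0>0$ with $\MoreauYosida{g}{\lambda}(x,0)\le g(x,0)<h(x)$ for every $x\in\mathbb B_{\epsilon_0}(\bar x)$ and every $\lambda>0$; hence for $v\in F_\lambda(x)$ the value $\rho_\lambda(x,v)$ is finite and solves $\MoreauYosida{g}{\lambda}(w)=h(x)$.

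Next I would exploit the exact decomposition of the envelope at its minimiser. By definition of the Moreau envelope and of the proximal point, $\MoreauYosida{g}{\lambda}(w)=g(\hat x,\hat z)+\frac{1}{2\lambda}\|w-(\hat x,\hat z)\|^2$, so combining with $\MoreauYosida{g}{\lambda}(w)=h(x)$ yields the identity $\frac{1}{2\lambda}\|w-(\hat x,\hat z)\|^2=h(x)-g(\hat x,\hat z)$. Writing $\bar h(\cdot)=\langle a,\cdot\rangle+b$ for the affine minorant and using the hypothesis $g(\hat x,\hat z)\ge \bar h(\hat x)$ together with the contraction estimate $\|\hat x-x\|\le\|w-(\hat x,\hat z)\|$, I obtain, with $t:=\|w-(\hat x,\hat z)\|$,
\[ \frac{1}{2\lambda}t^2\ \le\ h(x)-\bar h(\hat x)\ \le\ \big(h(x)-\bar h(x)\big)+\|a\|\,t. \]
Setting $M:=\max\{0,\sup_{x\in\overline{\mathbb B}_{\epsilon_0}(\bar x)}(h(x)-\bar h(x))\}$, which is finite by continuity on a compact set, this is a quadratic inequality in $t$ giving $t\le \lambda\|a\|+\sqrt{\lambda^2\|a\|^2+2M\lambda}$. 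This bound is independent of $x$ and $v$ and tends to $0$ as $\lambda\to0$, so choosing $\lambda_0$ small enough that the right-hand side stays below $\epsilon/\sqrt2$ for all $\lambda\in(0,\lambda_0)$ completes the argument.

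The step I expect to be delicate is not any computation but the correct reading—and use—of the standing hypothesis: the affine minorant $\bar h$ is a function of the decision variable $x$ alone, so $g(x,z)\ge\bar h(x)$ holds \emph{uniformly in} $z\in\mathbb R^m$. This uniform-in-$z$ lower bound is exactly what tames the displacement even along near-``infinite'' directions, where $\rho_\lambda(x,v)\to+\infty$ and the second coordinate of $w$ is unbounded: it forces $g(\hat x,\hat z)$ to be bounded below independently of $\hat z$, which is what closes the quadratic estimate. Were one to weaken the assumption to a mere affine minorant in the joint variable $(x,z)$—permitting $g(x,z)\to-\infty$ as $\|z\|\to+\infty$—the conclusion would genuinely fail, since then the prox could travel arbitrarily far out along such a direction while keeping the envelope value at the level $h(x)$; this is the point where the hypothesis is essential and where I would take care to invoke it.
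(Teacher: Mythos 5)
Your proposal is correct and follows essentially the same route as the paper's proof: both start from the identity $S_\Phi^h(\hat x,\hat z)+\tfrac{1}{2\lambda}\|(x,\rho_\lambda(x,v)Lv)-(\hat x,\hat z)\|^2=h(x)$ (valid by the characterization of $\rho_\lambda$ in Remark \ref{remarkCharc}), lower-bound $S_\Phi^h(\hat x,\hat z)$ by the affine minorant $\bar h(\hat x)$, control the cross term, and deduce a uniform $O(\sqrt{\lambda})$ bound on the proximal displacement over $\mathbb{B}_{\epsilon_0}(\bar x)\times\mathbb{S}^{m-1}$. The only (immaterial) difference is algebraic — you keep the linear term and solve a quadratic inequality in $t$, while the paper absorbs it via Young's inequality $ab\leq\tfrac{1}{2}a^2+\tfrac{1}{2}b^2$ — and your explicit $\sqrt{2}$ bookkeeping is in fact slightly more careful than the paper's final step.
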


\begin{proof}
Let $\epsilon_0 \in (0,\epsilon)$ such that $\mathbb{B}_{\epsilon_0} (\bar{x})  \subset U$, and pick $(\lambda,x,v)$ with $x \in \mathbb{B}_{\epsilon_0} (\bar{x})$, $\lambda \in (0,1)$ and $v\in F_\lambda(x)$.  We first notice that
\begin{equation*}
  S_\Phi^h(\hat x,\hat z)+\tfrac{1}{2\lambda}\|\hat{x}-x\|^2+\tfrac{1}{2\lambda}\|\hat{z}-z\|^2= \PMoreau{\Phi}{\lambda}{h}(x,z) = h(x)
\end{equation*}
where $z:=\rho_{\lambda}(x,v)Lv$ and the last equality follows from the definition of the latter term. Now, let us suppose that $\bar{h} = \langle x^\ast , \cdot\rangle +\beta$, so
\begin{equation}\label{lemma4.7_eq1}
   \langle x^\ast,\hat{x}\rangle+\beta+\tfrac{1}{2\lambda}\|\hat{x}-x\|^2+\tfrac{1}{2\lambda}\|\hat{z}-z\|^2\leq h(x).
\end{equation}
On the other hand, the inequality
\begin{equation*}
  |\langle x^\ast,\hat{x}-x\rangle|\leq \tfrac{1}{2}\|x^\ast\|^2+\tfrac{1}{2}\|\hat{x}-x\|^2
\end{equation*}
implies
\begin{equation}\label{lemma4.7_eq2}
  |\langle x^\ast,\hat{x}\rangle|\leq \tfrac{1}{2}\|x^\ast\|^2+\tfrac{1}{2}\|\hat{x}-x\|^2+\tfrac{1}{2}\|x^\ast\|^2+\tfrac{1}{2}\|x\|^2.
\end{equation}
From \eqref{lemma4.7_eq1} and \eqref{lemma4.7_eq2} we have that
\begin{equation*}
   \left(\tfrac{1}{2\lambda}-\tfrac{1}{2}\right)\|\hat{x}-x\|^2+\tfrac{1}{2\lambda}\|\hat{z}-z\|^2\leq h(x)-\beta+\|x^\ast\|^2+\tfrac{1}{2}\|x\|^2. 
\end{equation*}
Since $\frac{1}{1-\lambda} > 1$, and due to continuity of $h$, a constant $M > 0$ such that 
\begin{equation*}
   \|\hat{x}-x\|^2+\|\hat{z}-z\|^2\leq
   \left(\frac{2\lambda }{1-\lambda }\right)M. 
\end{equation*}

Now, considering $\lambda_0>0$ small enough, we can conclude that 
\begin{equation*}
   \|\hat{x}-x\|^2+\|\hat{z}-z\|^2\leq \epsilon^2, 
\end{equation*}
for all $\lambda \in (0,\lambda_0) $ and $x\in \mathbb{B}_{\epsilon_0} (\bar{x}) $, which shows that   \eqref{Inq} holds.
\smartqed
\end{proof}

In the following lemma we will require that the mapping $S_\Phi^h $ definied in \eqref{supfunctionS} satisfies the following growth condition at $\bar{x}$: there exist constants $\epsilon,\ell>0$ such that 
\begin{align}\label{growth}
\| \partial_x S_\Phi^h(x,z) \| \leq  \eta(\|z\|),  \;
		\forall x \in \mathbb{B}_{\epsilon} (\bar{x}),\; \forall  \|z\| \geq \ell,  
\end{align}
for some nondecresing function $\eta$ satisfying 
\begin{equation*}
   \lim_{\|z\|\rightarrow \infty} \|z\|^{m}f_\xi(z)\eta(\|z\|+\alpha)=0
\end{equation*}
for some $\alpha>0$. Here, the norm of a sub-differential set is defined as follows:
\begin{align*}
\| \partial_x S^h_\Phi(x,z) \|:=\sup\{ \| x^\ast\| : x^\ast \in  \partial_x S^h_\Phi(x,z)  \}
\end{align*}

\begin{lemma}\label{Lemma_UB}
Let us suppose the mapping $S_\Phi^h $ definied in \eqref{supfunctionS} is bounded from below by an affine linear function $\bar{h}:\mathcal{H}\to\R$ and satisfies the growth condition \eqref{growth} at $\bar x$, where $S_\Phi^h(\bar{x},0) <h(\bar{x})$. Then there exits $\gamma >0$ and  $\kappa >0$ such that 
\begin{align}\label{Lemma_UB01}
   \| \G{\lambda}(x,v) \|  \leq \kappa,\quad \forall 
  (\lambda,x,v)\in (0,\gamma)\times \mathbb{B}_\gamma(\bar x)\times \mathbb{S}^{m-1}
\end{align}
where $\G{\lambda}$ is defined in \eqref{def:GradientGe}. Moreover, for all $v\in I(\bar{x})$ and all $\epsilon >0$ there exists $\gamma >0$ such that 
\begin{align}\label{Lemma_UB02}
   \| \G{\lambda}(x,v) \|  \leq \epsilon,\quad \forall 
  (\lambda,x,v)\in (0,\gamma)\times \mathbb{B}_\gamma(\bar x)\times \mathbb{B}_{\gamma} (\bar{v}).
\end{align}
\end{lemma}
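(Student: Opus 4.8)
The plan is to estimate $\|\G{\lambda}(x,v)\|$ directly from the explicit formula \eqref{def:GradientGe}. For a finite direction $v\in F_\lambda(x)$, abbreviate $r:=\rho_\lambda(x,v)$, $z:=rLv$ and $(\hat x,\hat z):=\Prox{S^h_\Phi}{\lambda}(x,z)$, and write
\begin{equation*}
\|\G{\lambda}(x,v)\|=\theta(r,v)\,\frac{\|\nabla_x \PMoreau{\Phi}{\lambda}{h}(x,z)-\nabla h(x)\|}{\left|\left\langle \nabla_z \PMoreau{\Phi}{\lambda}{h}(x,z),Lv\right\rangle\right|}.
\end{equation*}
I would bound the numerator from above and the denominator from below by quantities depending only on $r$ and not on $\lambda$, and then show that the resulting factor times $\theta(r,v)$ stays bounded and, in fact, vanishes as $r\to+\infty$.

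For the numerator, the crucial point is to obtain a bound that does \emph{not} degenerate as $\lambda\to 0$ (the estimate \eqref{thetagrowth20} carries a $1/\lambda$ factor and is therefore useless here). To this end I would combine \eqref{eq:nablaprox}, which gives $\nabla \PMoreau{\Phi}{\lambda}{h}(x,z)\in\partial S^h_\Phi(\hat x,\hat z)$, with the observation that projecting a joint subgradient onto the $\mathcal{H}$-component yields $\nabla_x \PMoreau{\Phi}{\lambda}{h}(x,z)\in\partial_x S^h_\Phi(\hat x,\hat z)$. Lemma \ref{Prox_growth_condition} guarantees that $\hat x$ stays near $\bar x$ and that $\|\hat z\|\le \|z\|+\epsilon=r\|Lv\|+\epsilon$, so the growth condition \eqref{growth} applies at $(\hat x,\hat z)$ once $\|\hat z\|\ge\ell$ and gives $\|\nabla_x \PMoreau{\Phi}{\lambda}{h}(x,z)\|\le\eta(\|\hat z\|)$; together with local boundedness of $\nabla h$ near $\bar x$ this produces $\|\nabla_x \PMoreau{\Phi}{\lambda}{h}(x,z)-\nabla h(x)\|\le \eta(r\|Lv\|+\epsilon)+M$. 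For bounded $r$ (equivalently bounded $\|\hat z\|$) I would instead invoke the local Lipschitz continuity of the convex function $S^h_\Phi$ to bound the numerator by a constant.

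For the denominator I would use convexity of the one-dimensional map $\psi(t):=\PMoreau{\Phi}{\lambda}{h}(x,tLv)$. Since $\PMoreau{\Phi}{\lambda}{h}(x,0)\le S^h_\Phi(x,0)$, the Slater hypothesis $S^h_\Phi(\bar x,0)<h(\bar x)$ and continuity give a constant $\delta>0$ with $h(x)-\PMoreau{\Phi}{\lambda}{h}(x,0)\ge\delta$ for $x$ near $\bar x$ and all small $\lambda$; as $\psi(r)=h(x)$ by the characterization of $\rho_\lambda$ (Remark \ref{remarkCharc}), monotonicity of the derivative of $\psi$ yields
\begin{equation*}
\left\langle \nabla_z \PMoreau{\Phi}{\lambda}{h}(x,z),Lv\right\rangle=\psi'(r)\ge\frac{\psi(r)-\psi(0)}{r}\ge\frac{\delta}{r}.
\end{equation*}
Substituting the two bounds and recalling from \eqref{transformationftorho} that $\theta(r,v)\,r=\frac{2\pi^{m/2}|\det(L)|}{\Gamma(m/2)}\,r^{m}f_\xi(rLv)$, and using that $\|Lv\|\ge c_0>0$ since $L$ is nonsingular, I obtain
\begin{equation*}
\|\G{\lambda}(x,v)\|\le\frac{1}{\delta}\,\frac{2\pi^{m/2}|\det(L)|}{\Gamma(m/2)}\,\frac{1}{c_0^{m}}\,\|z\|^{m}f_\xi(z)\bigl(\eta(\|z\|+\alpha)+M\bigr),
\end{equation*}
for $r$ large (taking $\epsilon\le\alpha$), which tends to $0$ as $\|z\|=r\|Lv\|\to+\infty$ by the assumed density growth condition; for bounded $r$ the right-hand side is controlled by continuity of $f_\xi$ together with the constant numerator bound. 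This yields a uniform constant $\kappa$ and proves \eqref{Lemma_UB01}.

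Finally, for \eqref{Lemma_UB02} I would fix $\bar v\in I(\bar x)$, so that $\rho_0(\bar x,\bar v)=+\infty$, and use Proposition \ref{continuity_rho}: as $(\lambda,x,v)\to(0,\bar x,\bar v)$ the radii $\rho_\lambda(x,v)$ diverge to $+\infty$. On infinite directions $v\in I_\lambda(x)$ one has $\G{\lambda}(x,v)=0$, while on finite directions the displayed estimate tends to $0$ as $\rho_\lambda(x,v)\to+\infty$; hence $\|\G{\lambda}(x,v)\|\le\epsilon$ on a small enough neighbourhood. The main obstacle is the numerator estimate: avoiding the $1/\lambda$ blow-up requires transferring the growth condition \eqref{growth}, stated for $S^h_\Phi$, to the envelope gradient via the proximal point, for which the closeness estimate of Lemma \ref{Prox_growth_condition} is essential; the exact matching of powers — the factor $r$ from the denominator combining with $r^{m-1}$ in $\theta$ to give $r^{m}$, matching the $\|z\|^{m}$ in the density growth condition — is what makes the bound vanish at infinity.
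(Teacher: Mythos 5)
Your proposal is correct, and its three core estimates coincide with the paper's: the denominator bound $\langle\nabla_z\PMoreau{\Phi}{\lambda}{h}(x,\rho_\lambda(x,v)Lv),Lv\rangle\ge\delta/\rho_\lambda(x,v)$ from convexity plus the Slater condition (the paper derives the same inequality by testing convexity at the midpoint $\tfrac{\rho_\lambda(x,v)}{2}Lv$ rather than via the secant slope from $0$); the $\lambda$-independent numerator bound from $\nabla\PMoreau{\Phi}{\lambda}{h}(x,z)\in\partial S^h_\Phi(\hat x,\hat z)$ combined with Lemma \ref{Prox_growth_condition}, so that the growth condition \eqref{growth} can be applied at the proximal point; and Proposition \ref{continuity_rho} to force $\rho_\lambda(x,v)\to+\infty$ near infinite directions, which is exactly what yields \eqref{Lemma_UB02}. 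The genuine difference lies in how uniformity over $v\in\mathbb{S}^{m-1}$ is assembled for \eqref{Lemma_UB01}: the paper argues direction by direction --- local boundedness of $\partial S^h_\Phi$ around the limit point $(\bar x,\rho_0(\bar x,\bar v)L\bar v)$ when $\bar v\in F(\bar x)$, the growth condition when $\bar v\in I(\bar x)$ --- and then patches the direction-dependent neighbourhoods together by extracting a finite subcover of the compact sphere, whereas you split according to whether $\|z\|=\rho_\lambda(x,v)\|Lv\|$ exceeds a fixed threshold, invoking the growth condition above the threshold and Lipschitz continuity of the finite convex function $S^h_\Phi$ on bounded sets below it (legitimate here since Section \ref{sec:diff} assumes $\mathcal{H}$ finite-dimensional). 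Because Lemma \ref{Prox_growth_condition} is already uniform in $v\in\mathbb{S}^{m-1}$, your radius dichotomy produces the uniform constant $\kappa$ directly and dispenses with the covering argument, which is a somewhat cleaner assembly. One shared loose end, not a gap specific to your write-up: your final display requires $\|z\|^m f_\xi(z)\bigl(\eta(\|z\|+\alpha)+M\bigr)\to 0$, and the extra $M$-term coming from $\nabla h$ is only covered because $\eta$ is nondecreasing, hence eventually bounded below by a positive constant unless it vanishes identically, in which case $\|z\|^m f_\xi(z)\to 0$ must be argued separately; the paper's own bound \eqref{bound_G} carries the identical $\beta_2$-term and treats it with the same silence.
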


\begin{proof}
First, let us show that for every $\bar{v}\in\mathbb{S}^{m-1}$ there exist $\epsilon_{\bar{v}}>0$ and $M_{\bar{v}}>0$ such that 
\begin{equation*}
   \|G_{\lambda}(x,v)\|\leq M_{\bar{v}}, \text{ for all } (\lambda,x,v)\in (0,\epsilon_{\bar{v}})\times \mathbb{B}_{\epsilon_{\bar{v}}}(\bar{x})\times \mathbb{B}_{\epsilon_{\bar{v}}}(\bar{v}).
\end{equation*}

First we notice that there exist $\epsilon_1>0$ and $\beta_1>0$ such that we have $S_\Phi^h({x},0)-h( x)\leq -\beta_1$ for all $x\in \mathbb{B}_{\epsilon_1}(\bar{x})$. Then, for all $v\in F_\lambda(x)$, (see, e.g., \cite[Lemma 2.1 item 2]{vanAckooij_Henrion_2016})
\begin{align*}
   \tfrac{-\rho_{\lambda}(x,v)}{2}\langle \nabla_z &\PMoreau{\Phi}{\lambda}{h}(x,\rho_{\lambda}(x,v)Lv),Lv\rangle\nonumber\\
  =
  &\langle \nabla_z \PMoreau{\Phi}{\lambda}{h}(x,\rho_{\lambda}(x,v)Lv), \tfrac{\rho_{\lambda}(x,v)}{2}Lv-\rho_{\lambda}(x,v)Lv\rangle\nonumber\\
   \leq& \PMoreau{\Phi}{\lambda}{h}(x,\tfrac{\rho_{\lambda}(x,v)}{2}Lv)-\PMoreau{\Phi}{\lambda}{h}(x,\rho_{\lambda}(x,v)Lv)\nonumber\\
  =& \PMoreau{\Phi}{\lambda}{h}(x,\tfrac{\rho_{\lambda}(x,v)}{2}Lv)-h(x)\nonumber\\
   \leq& \tfrac{1}{2}\PMoreau{\Phi}{\lambda}{h}(x,0)+\tfrac{1}{2}\PMoreau{\Phi}{\lambda}{h}(x,\rho_{\lambda}(x,v)Lv)-h(x)\nonumber\\
  =& \tfrac{1}{2}\PMoreau{\Phi}{\lambda}{h}(x,0)-\tfrac{1}{2}h(x)\nonumber\\
   \leq & \tfrac{1}{2}S_\Phi^h(x,0)-\tfrac{1}{2}h(x)\nonumber\\
   \leq&-\tfrac{\beta_1}{2}.
\end{align*}
Since $\nabla h$ is locally bounded we have
\begin{equation}\label{bound_G}
\begin{aligned}
   \|\G{\lambda}(x,v)\|
   \leq &\tfrac{2\pi^{m/2}\text{det}(L)}{\Gamma(m/2)\beta_1}\rho_{\lambda}(x,v)^{m}f_\xi(\rho_{\lambda}(x,v)Lv)\\
  &\times\left(\|\nabla_x \PMoreau{\Phi}{\lambda}{h}(x,\rho_{\lambda}(x,v)Lv)\|+\beta_2\right)
\end{aligned}
\end{equation}
for some $\beta_2>0$ and for all $x\in \mathbb{B}_{\epsilon_1}(\bar{x})$ and $v\in F_\lambda(x)$.

Now let $\bar{v}\in \mathbb{S}^{m-1}$ be fixed. If $\bar{v}\notin I(\bar{x})$
then there exist $\epsilon_{\bar{v}}>0$ such that $v\notin I_{\lambda}(x)$ for all $(\lambda,x,v)\in (0,\epsilon_{\bar{v}})\times \mathbb{B}_{\epsilon_{\bar{v}}}(\bar{x})\times \mathbb{B}_{\epsilon_{\bar{v}}}(\bar{v})$ where $I_{\lambda}(x)$ was defined in \eqref{flambda}. Indeed, if it is not true, then there exists a sequence $(\lambda_k,x_k,v_k)\to (0,\bar x,\bar v)$ with $v_k\in I_{\lambda_k}(x_k)$. Hence, $\rho_{\lambda_k}(x_k,v_k)=\infty$ and so $\rho_0(\bar x,\bar v)=\infty$ by Proposition \ref{continuity_rho}. This yields a contradiction with $\bar v\notin I(\bar x)$. 

Since $S_\Phi^h$ is continuous at $(\bar x,\bar{z})$, where $\bar{z}:=  \rho_0(\bar{x},\bar{v})L\bar{v}$, there exist $\epsilon_2>0$ and $\beta_3>0$ such that for all $(x,z)\in \mathbb{B}_{\epsilon_2} (\bar{x},\bar{z})$
\begin{equation}\label{subdiff_bounded}
   \|(u^\ast,v^\ast) \|\leq \beta_3,\text{ for all } (u^\ast,v^\ast)\in \partial S_\Phi^h(x,z).
\end{equation}
Now, by    Proposition \ref{continuity_rho} and Lemma \ref{Prox_growth_condition}, and considering $\epsilon_{\bar{v}}$ small enough, we get that
\begin{equation*}
   \|(\bar{x},\rho_0(\bar{x},\bar{v})L\bar{v})-\Prox{S_\Phi^h}{\lambda}{(\bar{x},\rho_0(\bar{x},\bar{v})L\bar{v})}\|\leq \tfrac{\epsilon_2}{2}
\end{equation*}
and
\begin{equation*}
   \|(\bar{x},\rho_0(\bar{x},\bar{v})L\bar{v})-(x,\rho_{\lambda}(x,v)Lv)\|\leq \tfrac{\epsilon_2}{2}
\end{equation*}
for all $(\lambda,x,v)\in (0,\epsilon_{\bar{v}})\times \mathbb{B}_{\epsilon_{\bar{v}}}(\bar{x})\times \mathbb{B}_{\epsilon_{\bar{v}}}(\bar{v})$. Thus, since the proximal mapping is nonexpansive we have that
\begin{align}
   \|(\bar{x},\rho_0(\bar{x},\bar{v})L\bar{v})-&\Prox{S_\Phi^h}{\lambda}{(x,\rho_{\lambda}(x,v)Lv)}\|\nonumber\\
   \leq& \|(\bar{x},\rho_0(\bar{x},\bar{v})L\bar{v})-\Prox{S_\Phi^h}{\lambda}{(\bar{x},\rho_0(\bar{x},\bar{v})L\bar{v})}\|\nonumber\\
  &+\|(\bar{x},\rho_0(\bar{x},\bar{v})L\bar{v})-(x,\rho_{\lambda}(x,v)Lv)\|\leq \epsilon_2 \label{Prox_inball}
\end{align}
for all $(\lambda,x,v)\in (0,\epsilon_{\bar{v}})\times \mathbb{B}_{\epsilon_{\bar{v}}}(\bar{x})\times \mathbb{B}_{\epsilon_{\bar{v}}}(\bar{v})$. Now, by \eqref{subdiff_bounded}, \eqref{Prox_inball} and since due to \eqref{eq:nablaprox}
\begin{equation*}
   \nabla \PMoreau{\Phi}{\lambda}{h}(x,\rho_{\lambda}(x,v)Lv)\in \partial S_\Phi^h(\Prox{S_\Phi^h}{\lambda}{(x,\rho_{\lambda}(x,v)Lv)})
\end{equation*}
we have that
\begin{equation}\label{bound_derivative}
   \|\nabla_x \PMoreau{\Phi}{\lambda}{h}(x,\rho_{\lambda}(x,v)Lv)\|\leq \|\nabla \PMoreau{\Phi}{\lambda}{h}(x,\rho_{\lambda}(x,v)Lv)\|\leq \beta_3
\end{equation}
for all $(\lambda,x,v)\in (0,\epsilon_{\bar{v}})\times \mathbb{B}_{\epsilon_{\bar{v}}}(\bar{x})\times \mathbb{B}_{\epsilon_{\bar{v}}}(\bar{v})$. By  \eqref{bound_derivative}, Proposition \ref{continuity_rho}, \eqref{bound_G} and considering $\epsilon_{\bar{v}}<\epsilon_1$ smaller if needed, we get that
\begin{equation*}
   \|G_{\lambda}(x,v)\|\leq M_1
\end{equation*}
for all $(\lambda,x,v)\in (0,\epsilon_{\bar{v}})\times \mathbb{B}_{\epsilon_{\bar{v}}}(\bar{x})\times \mathbb{B}_{\epsilon_{\bar{v}}}(\bar{v})$.

Now, let us assume $\bar{v}\in I(\bar{x})$ and consider $\gamma >0$. By the growth condition, we have that there exists $\ell,\epsilon >0$ such that 
\begin{equation}\label{growth_InqRho}
   \rho_{\lambda}(x,v)^{m}f_\xi(\rho_{\lambda}(x,v)Lv)\eta(\rho_{\lambda}(x,v)\|Lv\|+\alpha )\leq \gamma
\end{equation}
  whenever $\rho_{\lambda}(x,v)\|Lv\| \geq \ell $, and 
 \begin{align}\label{Eq00Lemma}
\| \partial_x S_\Phi^h(x,z) \| \leq  \eta(\|z\|),  \;
		\forall x \in \mathbb{B}_{\epsilon} (\bar{x}),\; \forall  \|z\| \geq \ell;  
\end{align} 

Now, by Lemma \ref{Prox_growth_condition}, we can consider $\epsilon_0,\lambda_0 >0$ such that $ \hat{x} \in \mathbb{B}_\epsilon(\bar{x})$ and 
\begin{align}\label{Eq01Lemma}
   \rho_\lambda (x,v)\| Lv\| + \alpha \geq \| \hat{z}\| \geq  \rho_\lambda (x,v)\| Lv\| - \alpha, 
\end{align}  for all $(\lambda, x, v) \in (0,\lambda_0) \times \mathbb{B}_{\epsilon_0}(\bar{x})  \times \mathbb{S}^{m-1}$ with $v\in F_\lambda(x)$, where $(\hat{x},\hat{z}):=\Prox{S_\Phi^h}{\lambda}{(x,\rho_{\lambda}(x,v)Lv)}$. Moreover, using Proposition \ref{continuity_rho}, when considering a small enough $\epsilon_3 \in ( 0,\min\{ \epsilon_0,\lambda_0\})$ it follows that:
\begin{equation}\label{Eq02Lemma}
   \rho_\lambda(x,v) \geq \frac{\ell +\alpha}{ \| Lv\|}, \text{ for all } (\lambda, x,v) \in (0,\epsilon_3) \times \mathbb{B}_{\epsilon_3} (\bar{x})  \times \mathbb{B}_{\epsilon_3}(\bar{v}).
\end{equation}
Now, mixing equations \eqref{Eq00Lemma}, \eqref{Eq01Lemma} and \eqref{Eq02Lemma}, we conclude that for all $(\lambda, x,v) \in (0,\epsilon_3) \times \mathbb{B}_{\epsilon_3} (\bar{x})  \times \mathbb{B}_{\epsilon_3}(\bar{v})$, we have
\begin{equation*}
   \|\nabla_x \PMoreau{\Phi}{\lambda}{h}(x,\rho_{\lambda}(x,v)Lv)\|\leq \eta( \|\hat{z}\|) \leq  \eta(\rho_\lambda(x,v)\|Lv\|+\alpha),
\end{equation*}
where we have used the fact that $\eta $ is nondecreasing and  $\nabla \PMoreau{\Phi}{\lambda}{h}(x,\rho_{\lambda}(x,v)Lv)$ belongs to the set $\partial S_\Phi^h(\Prox{S_\Phi^h}{\lambda}{(x,\rho_{\lambda}(x,v)Lv)})$. Then, replacing this into \eqref{bound_G}, and using  \eqref{growth_InqRho}, we get that 
\begin{align}\label{eq001inf}
\|G_\lambda(x,v)\| \leq \gamma, \text{ for all } (\lambda, x,v) \in (0,\epsilon_3) \times \mathbb{B}_{\epsilon_3} (\bar{x})  \times \mathbb{B}_{\epsilon_3}(\bar{v}).
\end{align}

Since $\mathbb{S}^{m-1}$ is compact and the family of neighborhoods $\mathbb{B}_{\epsilon_{\bar{v}}}(\bar v)$ covers $\mathbb{S}^{m-1}$, we can pick a finite subcover, that is, there exists $N\in\mathbb{N}$ and some $v_1,\ldots,v_N\in \mathbb{S}^{m-1}$ such that
\begin{equation*}
   \mathbb{S}^{m-1}\subset \bigcup_{i=1}^N \mathbb{B}_{\epsilon_{v_i}}(v_i).
\end{equation*}
Therefore, we choose $\gamma>0$ such that
\begin{equation*}
  (0,\gamma)\subset \min\{\epsilon_{v_i}:\; i=1\ldots,N\}\;\text{ and }\; \mathbb{B}_\gamma(\bar{x})\subset \bigcap_{i=1}^N \mathbb{B}_{\epsilon_{v_i}}(\bar{x})
\end{equation*}
and define $ \kappa:=\max\{M_{v_i}:\;i=1\ldots,N\}$ to conclude the proof of \eqref{Lemma_UB01}. Finally, the proof of \eqref{Lemma_UB02} follows from the more precise estimation  \eqref{eq001inf}.
\smartqed
\end{proof}

\begin{theorem}[Gradient Consistency]
  Let us suppose the mapping $S_\Phi^h $ defined in \eqref{supfunctionS} satisfies the $\eta_\theta$-growth condition at $(\bar x,\bar v) $ for all $\bar v\in I(x)$, and assume that $$\partial S_\Phi^h (\bar x, \rho (\bar x,v) Lv) \text{ is single valued for almost all } v\in \mathbb{S}^{m-1}.$$
  Then, the probability function $\varphi$, given in \eqref{Proba:funct} is Locally Lipschitzian at $\bar x$ and in fact Fréchet différentiable at $\bar x$. Moreover, any accumulation point of sequences $\{\nabla \varphi_{\lambda}(x_k)\}_{k \geq 0}$ with $\lambda_k \to 0^+$ and $x_k \to \bar{x}$ are equal to $\nabla\varphi(\bar{x})$.
\end{theorem}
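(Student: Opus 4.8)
The plan is to combine the gradient formula of Theorem \ref{Theo:form:gradient}, the uniform gradient bounds of Lemma \ref{Lemma_UB}, and the subdifferential estimate of Proposition \ref{01_estimation}. First I would obtain the local Lipschitz property. Lemma \ref{Lemma_UB} supplies $\gamma,\kappa>0$ with $\|\G{\lambda}(x,v)\|\le\kappa$ for every $(\lambda,x,v)\in(0,\gamma)\times\mathbb{B}_\gamma(\bar x)\times\mathbb{S}^{m-1}$; hence, by the formula $\nabla\varphi_\lambda(x)=\int_{\mathbb{S}^{m-1}}\G{\lambda}(x,v)\,d\mu_\zeta(v)$ and $\mu_\zeta$ being a probability measure on $\mathbb{S}^{m-1}$, one gets $\|\nabla\varphi_\lambda(x)\|\le\kappa$, so each $\varphi_\lambda$ with $\lambda\in(0,\gamma)$ is $\kappa$-Lipschitz on $\mathbb{B}_\gamma(\bar x)$. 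Passing to the limit $\lambda\to0$ in $|\varphi_\lambda(x)-\varphi_\lambda(y)|\le\kappa\|x-y\|$ and invoking the pointwise convergence $\varphi_\lambda\to\varphi$ from Theorem \ref{var_convergence}(a) shows that $\varphi$ is $\kappa$-Lipschitz near $\bar x$.

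The decisive step is to prove that the outer limit $W:=\limsup_{x\to\bar x,\,\lambda\to0^+}\{\nabla\varphi_\lambda(x)\}$ is a singleton. I would take an arbitrary sequence $(\lambda_k,x_k)\to(0,\bar x)$ with $\nabla\varphi_{\lambda_k}(x_k)\to w$ and pass to the limit inside $\nabla\varphi_{\lambda_k}(x_k)=\int_{\mathbb{S}^{m-1}}\G{\lambda_k}(x_k,v)\,d\mu_\zeta(v)$ by dominated convergence, the dominating function being the constant $\kappa$ of Lemma \ref{Lemma_UB}. For the integrand's pointwise limit I distinguish two cases. If $v\in I(\bar x)$, estimate \eqref{Lemma_UB02} gives $\G{\lambda_k}(x_k,v)\to0$. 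If $v\in F(\bar x)$ and $\partial S_\Phi^h(\bar x,\rho_0(\bar x,v)Lv)$ is single-valued—which holds for $\mu_\zeta$-a.e.\ $v$ by hypothesis—then Proposition \ref{continuity_rho} yields $\rho_{\lambda_k}(x_k,v)\to\rho_0(\bar x,v)<\infty$, while Lemma \ref{Prox_growth_condition} together with Proposition \ref{continuity_rho} forces the proximal points $\Prox{S_\Phi^h}{\lambda_k}(x_k,\rho_{\lambda_k}(x_k,v)Lv)$ to converge to $(\bar x,\rho_0(\bar x,v)Lv)$. Since by \eqref{eq:nablaprox} the vector $\nabla\PMoreau{\Phi}{\lambda_k}{h}(x_k,\rho_{\lambda_k}(x_k,v)Lv)$ belongs to $\partial S_\Phi^h$ evaluated at those proximal points, the outer semicontinuity and local boundedness of the subdifferential of the continuous convex function $S_\Phi^h$, combined with single-valuedness, force it to converge to the unique element of $\partial S_\Phi^h(\bar x,\rho_0(\bar x,v)Lv)$; the denominator $\langle\nabla_z\PMoreau{\Phi}{\lambda_k}{h},Lv\rangle$ stays bounded away from $0$ by the chain of inequalities used in the proof of Lemma \ref{Lemma_UB}. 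Hence $\G{\lambda_k}(x_k,v)\to\G{0}(\bar x,v)$, the gradient-like term built from this unique subgradient, the $\mu_\zeta$-null exceptional set being absorbed by the uniform bound. Dominated convergence then gives $w=\int_{\mathbb{S}^{m-1}}\G{0}(\bar x,v)\,d\mu_\zeta(v)=:w^\ast$, which is independent of the chosen sequence, so $W=\{w^\ast\}$.

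Finally, Proposition \ref{01_estimation} gives $\pmord\varphi(\bar x)\subseteq W=\{w^\ast\}$, and since $\varphi$ is locally Lipschitz on the finite-dimensional space $\mathcal H$ its basic subdifferential at $\bar x$ is nonempty, whence $\pmord\varphi(\bar x)=\{w^\ast\}$ and $\pclarke\varphi(\bar x)=\operatorname{conv}\pmord\varphi(\bar x)=\{w^\ast\}$. A locally Lipschitz function whose Clarke subdifferential reduces to a single point is strictly, hence Fréchet, differentiable there, with $\nabla\varphi(\bar x)=w^\ast$; consequently every accumulation point of $\{\nabla\varphi_{\lambda_k}(x_k)\}$ coincides with $w^\ast=\nabla\varphi(\bar x)$. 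I expect the main obstacle to lie in the second paragraph: securing the a.e.\ pointwise convergence $\G{\lambda_k}(x_k,v)\to\G{0}(\bar x,v)$, which rests on transporting the outer-semicontinuous convex subdifferential along the converging proximal points under the single-valuedness assumption and on keeping the denominator uniformly nondegenerate, with the uniform bound of Lemma \ref{Lemma_UB} ensuring the null set is harmless in the limit.
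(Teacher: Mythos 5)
Your proof is correct, and its skeleton is the same as the paper's: uniform boundedness of $\G{\lambda}$ from Lemma \ref{Lemma_UB}, the splitting of $\mathbb{S}^{m-1}$ into $F(\bar x)$ and $I(\bar x)$ with \eqref{Lemma_UB02} killing the infinite directions, identification of the limit on $F(\bar x)$ through $\partial S_\Phi^h$ and the a.e.\ single-valuedness hypothesis, and Proposition \ref{01_estimation} to transfer the conclusion to $\varphi$ itself. Where you genuinely diverge is in the supporting machinery, and in each instance your route is more elementary and self-contained. For the local Lipschitz property, the paper feeds Lemma \ref{Lemma_UB} into Mordukhovich's boundedness criterion for the basic subdifferential \cite[Theorem 4.15]{MR3823783} via the inclusion of Proposition \ref{01_estimation}; you instead observe that $\Vert\nabla\varphi_\lambda\Vert\le\kappa$ on a ball makes every $\varphi_\lambda$ uniformly $\kappa$-Lipschitz there and pass to the pointwise limit using Theorem \ref{var_convergence}(a), which needs no nonsmooth calculus at all. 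For the limit--integral interchange, the paper invokes a set-valued Fatou lemma \cite[Corollary~4.1]{MR2197293} followed by Attouch's theorem on graph convergence of subdifferentials of Moreau envelopes \cite[Theorem 3.66]{Attouch1984}; you use scalar dominated convergence, which obliges you to produce an honest a.e.\ pointwise limit, and you build it by hand: convergence of the proximal points (Lemma \ref{Prox_growth_condition} together with Proposition \ref{continuity_rho}), the inclusion \eqref{eq:nablaprox}, and the closed graph plus local boundedness of the convex subdifferential combined with single-valuedness. This is in effect an inline proof of exactly the instance of Attouch's theorem that the paper cites, and it buys slightly more: every admissible sequence $\nabla\varphi_{\lambda_k}(x_k)$ actually converges to the same $w^\ast$, rather than merely having its accumulation points trapped in a singleton-valued integral. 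Your denominator control is also sound: the chain of inequalities in the proof of Lemma \ref{Lemma_UB} gives $\langle\nabla_z\PMoreau{\Phi}{\lambda}{h}(x,\rho_\lambda(x,v)Lv),Lv\rangle\ge\beta_1/\rho_\lambda(x,v)$, and since $\rho_{\lambda_k}(x_k,v)\to\rho(\bar x,v)<\infty$ for $v\in F(\bar x)$, the denominator stays bounded away from zero along the sequence and in the limit. Finally, your endgame ($\pclarke\varphi(\bar x)=\operatorname{conv}\pmord\varphi(\bar x)$ a singleton, hence strict, hence Fr\'echet differentiability) is equivalent to the paper's appeal to \cite[Theorem 4.17]{MR3823783}. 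Net effect: same decomposition, but three nontrivial citations are replaced by direct arguments, at the cost of a somewhat longer write-up.
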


\begin{proof}
First, let us notice that by Proposition \ref{01_estimation} we have that  for all $x$ close enough to $\bar{x}$
$$\pmord \varphi(x) \subset \limsup\limits_{\lambda \to 0^+ , x' \to x } \nabla   \varphi_\lambda(x'). $$
Now, by Lemma \ref{Lemma_UB} we have that the right-hand side set of the above inclusion is bounded, and consequently the function $\varphi$ is locally Lipschitz at $\bar{x}$ (see, \cite[Theorem 4.15]{MR3823783}). Then, due to  \cite[Theorem 4.17]{MR3823783} it is enough to show that 
$ \limsup_{\lambda \to 0^+ , x' \to \bar{x} }  \nabla  \varphi_\lambda(x') $ is single valued. Indeed, by Lemma \ref{Lemma_UB} we can apply Fatou's type theorem (see, e.g., \cite[Corollary~4.1]{MR2197293}) and obtain that 

\begin{equation}\label{inclusion_limsup_gradients}
    \limsup\limits_{\lambda \to 0^+ , x' \to \bar{x} }\nabla   \varphi_\lambda(x') \subset \int_{\mathbb{S}^{m-1}} \limsup\limits_{\lambda \to 0^+ , x' \to \bar{x} }  G_\lambda(x',v) d\mu_{\zeta}(v).
\end{equation} 
Now, let $v\in F(\bar{x})$ and consider
\begin{equation*}
    w\in \limsup\limits_{\lambda \to 0^+ , x' \to \bar{x} }  G_\lambda(x',v).
\end{equation*}
Then there exist $x_k\to \bar{x}$ and $\lambda_k\to 0^+$ such that $G_{\lambda_k}(x_k,v)\to w$. By Proposition \ref{continuity_rho}, \eqref{def:GradientGe} and since
\begin{equation*}
    \limsup\limits_{\lambda \to 0^+ , x' \to \bar{x} }  \nabla\PMoreau{\Phi}{\lambda}{h}(x',\rho_\lambda(x',v)Lv)=\partial S_\Phi^h(\bar{x},\rho(\bar{x},v)Lv)
\end{equation*}
(see, e.g., \cite[Theorem 3.66, p. 373 ]{Attouch1984}), we have that 
\begin{equation*}
    w=-\theta(\rho(\bar{x},v) ,v) \left( \frac{ x^\ast-\nabla h(x)}{ \left\langle  z^\ast,Lv\right\rangle } \right) \text{ for some } (x^\ast,z^\ast)\in \partial S_\Phi^h(\bar{x},\rho(\bar{x},v)Lv).
\end{equation*}
On the other hand, if $v\in I(\bar{x})$ and 
\begin{equation*}
    w\in \limsup\limits_{\lambda \to 0^+ , x' \to \bar{x} }  G_\lambda(x',v), 
\end{equation*}
we can conclude from \eqref{Lemma_UB02} that $w=0$. Therefore, 
\begin{align*}
\limsup\limits_{\lambda \to 0^+ , x' \to \bar{x} }  G_\lambda(x',v) \subset \left\{  \begin{array}{cc}
    \left\{-\theta(\rho(\bar{x},v) ,v) \left( \frac{ x^\ast-\nabla h(\bar{x})}{ \left\langle z^\ast,Lv\right\rangle } \right)\text{ s.t } (x^\ast,z^\ast)\in \partial S_\Phi^h(\bar{x}, \rho(\bar{x},v)Lv)\right\}  & \text{ if } v\in F(\bar{x})\\
\{ 0\}  & \text{ if } v\in I(\bar{x}),   
\end{array} \right.
\end{align*}
and since $\partial S_\Phi^h(\bar{x}, \rho(\bar{x},v)Lv)$ is single valued, we conclude the proof from \eqref{inclusion_limsup_gradients}.
\smartqed
\end{proof}

\section{Consistency in nonsmooth conic chance constrained optimization problems}
\label{sec:consistency}
 
In this section, we study the convergence of the solutions of optimization problems generated by replacing the probability function by our Moreau regularized versions. Formally, for a fixed reliability parameter $p \in [0,1]$, let us consider a convex proper and lsc  function $\psi:\mathcal{H}\to\Rx$ and the optimization problem
\begin{equation}\tag{$P$}\label{Problem1}
   \begin{aligned}
   \min \psi (x)\\
  \text{s.t }\, x\in M(p)
\end{aligned}
\end{equation}
where $M( {p}) :=\{ x\in \mathcal{H}: \varphi(x) \geq p \}$ and $\varphi$ is the probability function defined in \eqref{Proba:funct}. Furthermore, we consider the family of problems
\begin{equation}\tag{$P_\lambda$}\label{Problemlambda}
   \begin{aligned}
   \min \MoreauYosida{\psi}{\lambda} (x)\\
  \text{s.t }\, x\in M_\lambda(p),
\end{aligned}
\end{equation}
where $M_\lambda(p):= \{ x\in \mathcal{H}: \varphi_\lambda(x) \geq p   \}$ for the regularized probability function $\varphi_\lambda$ given in \eqref{Proba:funct:lamb}. In the same spirit, the objective function of problem \eqref{Problem1} is replaced by its Moreau regularization to have that the optimization problems \eqref{Problemlambda} have smooth data. Let us denote by $v(P)$ and $v(P_\lambda)$ the values of the problems \eqref{Problem1} and \eqref{Problemlambda}, respectively.
 
The first result provides the Painlev\'e-Kuratowski  and Mosco convergence of the feasible sets of problem \eqref{Problemlambda}  to the feasible set given in the original optimization problem \eqref{Problem1}.
 
 	\begin{proposition}\label{mosco_convergence}
 	Consider $p\in [0,1]$. Then,
 	\begin{enumerate}[label=\alph*)]
 		\item The sets $M_\lambda(p) $ Painlev\'e-Kuratowski converge to $M(p).$
 		\item The sets $M_\lambda(p) $ Mosco converge to $M(p)$, provided that the function $h$ in \eqref{scalar01} is sequentially weakly continuous.
 	\end{enumerate}
 	\end{proposition}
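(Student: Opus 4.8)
The plan is to deduce both convergence notions directly from the variational properties already recorded in Theorem~\ref{var_convergence}, reading ``the sets $M_\lambda(p)$ converge'' as the convergence of $M_{\lambda_k}(p)$ to $M(p)$ along an arbitrary sequence $\lambda_k\to 0$. Since the Painlev\'e-Kuratowski and Mosco notions share the same inner-limit requirement (condition $a)$ of the preliminaries), I would dispose of that once and then handle the two outer-limit conditions separately. \emph{Inner limit.} Fix $x\in M(p)$, so $\varphi(x)\geq p$. By the monotonicity in Theorem~\ref{var_convergence}~a) we have $\varphi_\lambda(x)\geq \inf_{\mu>0}\varphi_\mu(x)=\varphi(x)\geq p$ for every $\lambda>0$, hence $x\in M_\lambda(p)$ for all $\lambda$. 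Thus the constant sequence $x_{\lambda_k}:=x$ lies in $M_{\lambda_k}(p)$ and converges strongly (hence weakly) to $x$, verifying $M(p)\subset \liminf_k M_{\lambda_k}(p)$, which is exactly the inner-limit condition for both notions.

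\emph{Outer limit, Painlev\'e-Kuratowski.} Take any $\lambda_k\to 0$ and any $x_k\in M_{\lambda_k}(p)$ with $x_k\to x$ strongly; I must show $x\in M(p)$. Since $\varphi_{\lambda_k}(x_k)\geq p$ for every $k$, passing to the $\limsup$ and invoking the strong upper-semicontinuity estimate \eqref{upper:ineq} of Theorem~\ref{var_convergence}~b) yields $p\leq \limsup_k \varphi_{\lambda_k}(x_k)\leq \varphi(x)$, so $\varphi(x)\geq p$ and $x\in M(p)$. This is condition $b)$ and, together with the inner limit, completes part a).

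\emph{Outer limit, Mosco.} Here I replace strong by weak convergence: let $\lambda_k\to 0$ and $x_k\in M_{\lambda_k}(p)$ with $x_k\rightharpoonup x$. Under the additional hypothesis that $h$ is sequentially weakly continuous, the second statement of Theorem~\ref{var_convergence}~b) guarantees that \eqref{upper:ineq} persists for weakly convergent sequences, so again $p\leq \limsup_k \varphi_{\lambda_k}(x_k)\leq \varphi(x)$, whence $x\in M(p)$. This verifies condition $c)$ and gives Mosco convergence. The only genuinely delicate ingredients are already packaged in Theorem~\ref{var_convergence}: the inner limit exploits that the envelope dominates $\varphi$ from above (so feasibility for $\varphi$ forces feasibility for every $\varphi_\lambda$), while the outer limit uses the reverse control provided by \eqref{upper:ineq}; matching the weak versus strong topology in the outer-limit step is precisely the distinction between the two parts of Theorem~\ref{var_convergence}~b), so no further obstacle arises.
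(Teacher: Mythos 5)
Your proposal is correct and follows essentially the same route as the paper's proof: the inner limit comes from $M(p)\subset M_\lambda(p)$ (a consequence of Theorem~\ref{var_convergence}~a)) via constant sequences, and both outer limits come from the strong and weak versions of \eqref{upper:ineq} in Theorem~\ref{var_convergence}~b). The only difference is presentational — you spell out the constant-sequence argument and treat the two outer-limit conditions in separate paragraphs, whereas the paper compresses both cases into one line.
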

 \begin{proof}
 	Let us consider a sequence $x_{k} \in M_{\lambda_k}(p)$  with  $\lambda_{k} \to 0$ and $x_k \to x$ ($ x_{k} \rightharpoonup x$, respectively). Then by  Item b) of Theorem \ref{var_convergence}   we have that 
 	\begin{align*}
 		p \leq \limsup\limits_{k \to \infty} \varphi_{\lambda_k} (x_k)  \leq \varphi(x),
 	\end{align*}
 which shows that $x\in M(p)$. Now, by Item a) of Theorem \ref{var_convergence} we have that 
 $M(p) \subset M_{\lambda}(p)$, which particularly implies that  item a) holds ( item b) holds, respectively). 
 \smartqed
 \end{proof}

It is worth mentioning that Mosco convergence is commonly related to convex sets because of the weak convergence needed in the definition. Nevertheless, a probability function cannot be convex (unless it is a constant mapping) because it takes values on $[0,1]$. Furthermore, the sets $M_\lambda(p)$ are not necessarily convex even in finite dimension, as the following example shows.
 \begin{example}
Let $\xi \sim \mathcal{N}(0,1)$ and consider the probability function $\varphi: \R^2\to\R$ given by
\begin{equation*}
   \varphi(x_1,x_2)=\P(g(x_1,x_2,\xi)\leq 0)
\end{equation*}
where $g:\R^2\times\R\to \R$ is the nonconvex function $g(x_1,x_2,z)=z+f(x_1)+\tfrac{1}{2}x_2^2$ and
\begin{equation*}
  f(x_1)=\left\{
   \begin{array}{cl}
    -\tfrac{1}{2}x_1^2 &\text{ if } x_1\leq 0  \\
     0& \text{ otherwise.} 
   \end{array}
   \right.
\end{equation*}
Consider the convex and differentiable function $h(x_1,x_2)=\tfrac{1}{2}x_1^2$ and notice that
\begin{equation*}
  g(x_1,x_2,z)+h(x_1,x_2)=z+\hat{f}(x_1)+\tfrac{1}{2}x_2^2
\end{equation*}
is convex, where
\begin{equation*}
   \hat{f}(x_1)=\left\{
   \begin{array}{cl}
     \tfrac{1}{2}x_1^2 &\text{ if } x_1>0  \\
     0& \text{ otherwise.} 
   \end{array}
   \right.
\end{equation*}
The regularized probability function is
\begin{equation*}
   \varphi_\lambda(x)=\P\left(\xi-\tfrac{1}{2}\lambda+\MoreauYosida{\hat{f}}{\lambda}(x_1)+\tfrac{1}{2(\lambda+1)}x_2^2\leq \tfrac{1}{2}x_1^2\right)
\end{equation*}
where
\begin{equation*}
   \MoreauYosida{\hat{f}}{\lambda}(x_1)=\left\{
   \begin{array}{cl}
     \tfrac{1}{2(\lambda+1)}x_1^2 &\text{ if } x_1>0  \\
     0& \text{ otherwise.} 
   \end{array}
   \right.
\end{equation*}
The upper level sets $M_\lambda(p)$ of $\varphi_\lambda(x)$ with $\lambda=0.5$ are not all convex:
Figure \ref{figure56a} illustrates the graph of $\varphi_\lambda$ for $\lambda=0.5$ and Figure \ref{figure56b} illustrates its contour plot.

\begin{figure}
    \begin{minipage}[t]{0.5\textwidth}
    \includegraphics[width=3.5in]{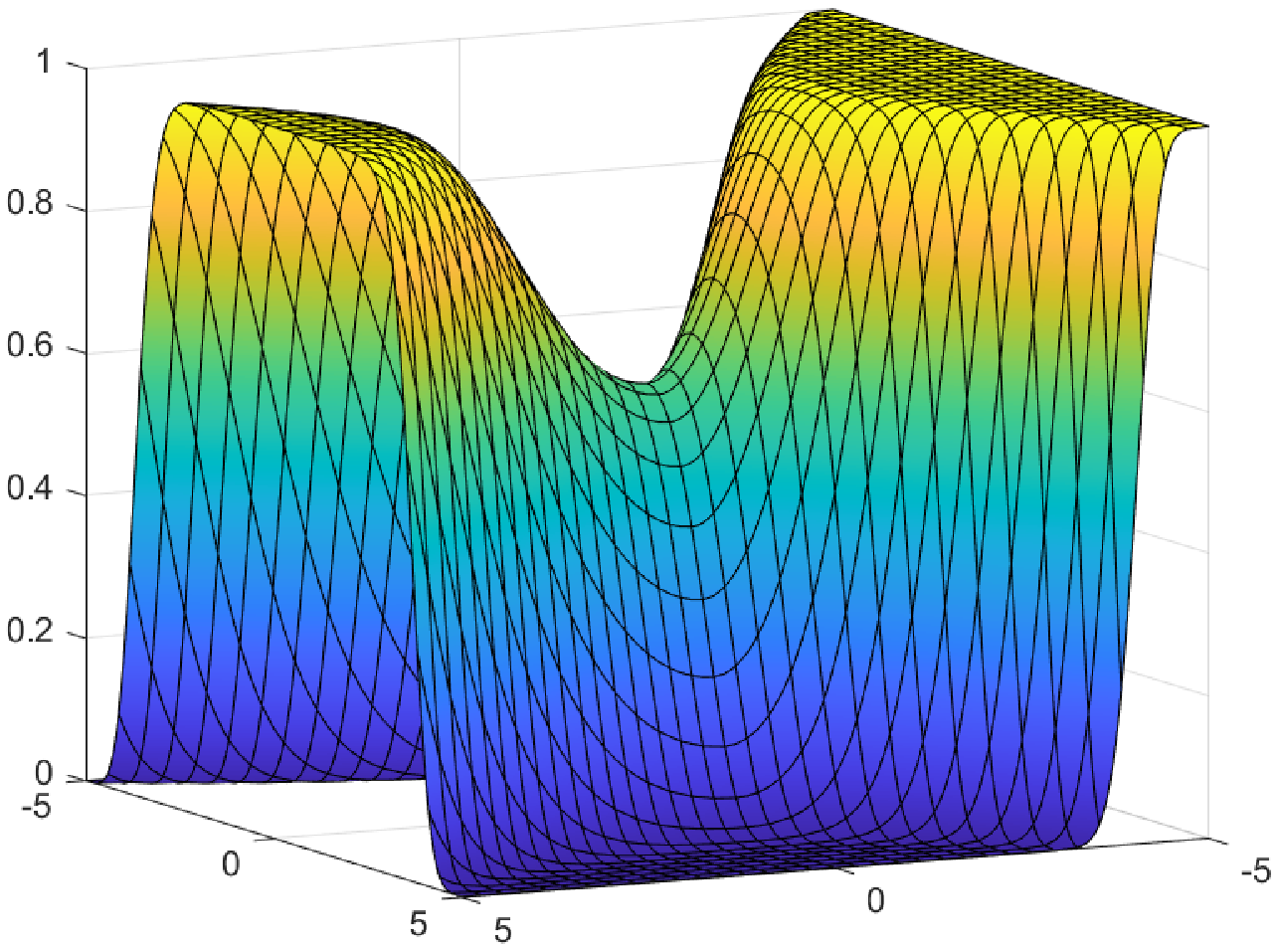}
    \caption[]{Graph of $\varphi_\lambda$ for $\lambda=0.5$}
    \label{figure56a}
    \end{minipage}
    \begin{minipage}[t]{0.5\textwidth}
    \hspace{-0.11cm}
    \includegraphics[width=3.5in]{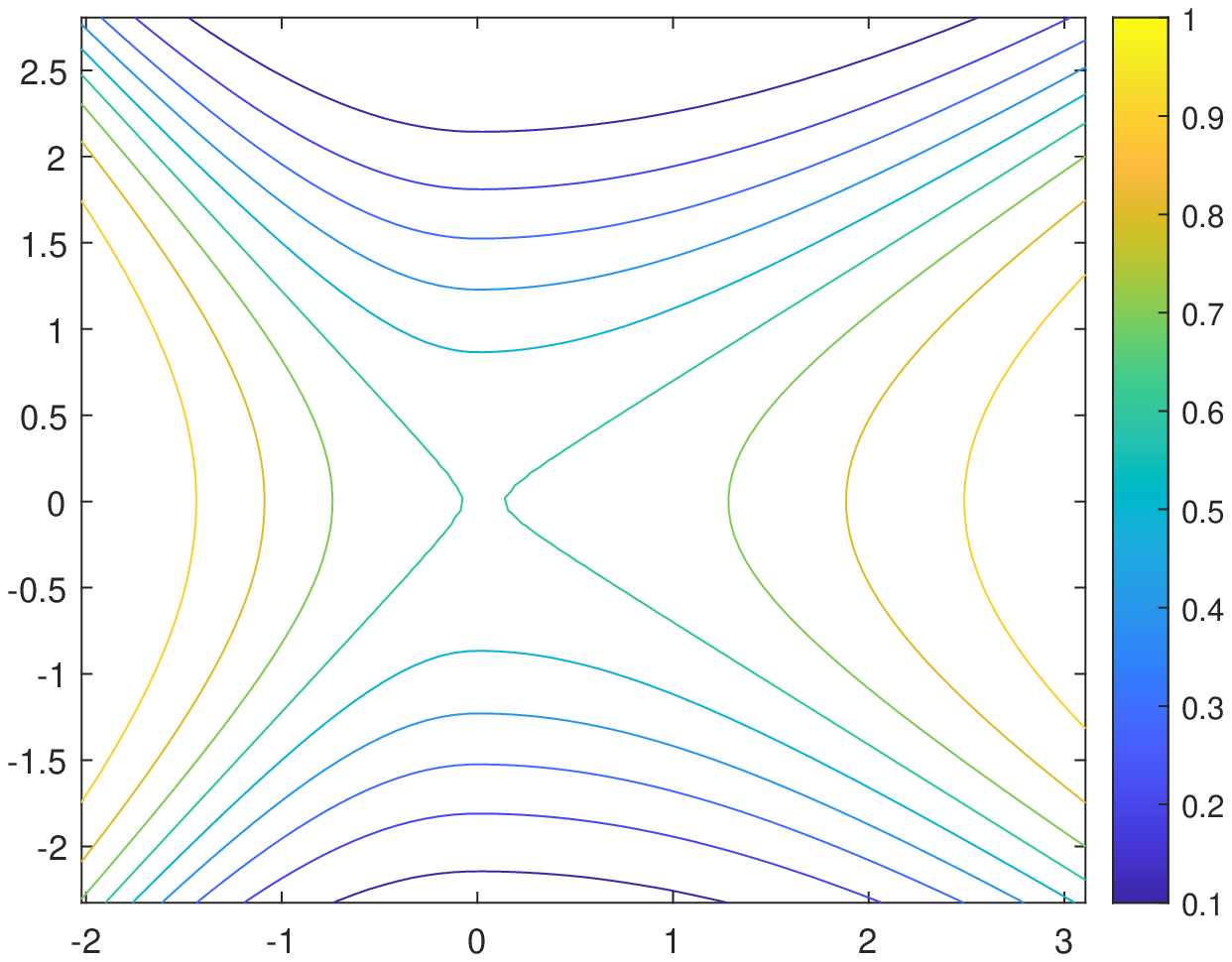}
    \caption[]{contour plot of $\varphi_\lambda$ for $\lambda=0.5$}.
    \label{figure56b}
    \end{minipage}
\end{figure}
\end{example}
 
Here, it is important to notice that our regularized feasible set contains the initial one. Nevertheless, the next proposition shows that under a small enlargement it is possible to show a partial appositive inclusion, which  measures how far we are from our initial feasible set in terms of the random inequality. 
\begin{proposition}
 Let $\mathcal{H}$ be finite dimensional space, and $\epsilon,\eta >0$ be given and $C \subseteq H$ be a bounded closed convex set. Let us define the following enlargements:
 \begin{align*}
M^\epsilon (p) :=\{ x\in \mathcal{H}: \varphi^{ \epsilon}	(x) \geq p	\} \text{ and } \varphi^{ \epsilon}	(x) =\mathbb{P}( S_\Phi(x,\xi) \leq h(x) +\epsilon ).
 \end{align*}
 
Then there exists $\lambda_0>0$ such that for all $p \in \R$ and all $\lambda \in (0,\lambda_0)$ 
  \begin{align}\label{Inclusion01}
	M^\epsilon (p-\eta)\supseteq M_\lambda(p)\cap C,
\end{align} 
In addition, if $f_\xi $ has bounded support we have
\begin{align*}
	M^\epsilon (p)\supseteq M_\lambda(p) \cap C.
\end{align*}
\end{proposition}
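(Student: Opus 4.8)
\emph{Strategy.} The only analytic tool I would need is the uniform Moreau approximation of Proposition~\ref{Uniform:aprox}, which is available precisely because $\mathcal H$ (and hence $\mathcal H\times\R^m$) is finite dimensional and $S^h_\Phi$ is a finite convex function. The whole difficulty is that in $\varphi_\lambda$ the envelope $\PMoreau{\Phi}{\lambda}{h}=\MoreauYosida{S^h_\Phi}{\lambda}$ is evaluated at $(x,\xi(\omega))$ with $\xi$ ranging over all of $\R^m$, whereas Proposition~\ref{Uniform:aprox} controls the envelope only on a \emph{bounded} set. The plan is therefore to (i) freeze a bounded product set $C\times\{\|z\|\le R\}$ on which the approximation error drops below $\epsilon$, and (ii) absorb into the slack $\eta$ the probability mass that $\xi$ places outside that set.

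\emph{Step 1 (uniform error on a bounded set).} I would apply Proposition~\ref{Uniform:aprox} to $g=S^h_\Phi$ on the closed, convex, bounded set $S:=C\times\{z\in\R^m:\|z\|\le R\}\subset\mathcal H\times\R^m$ (with $R$ fixed in Step~2). This produces constants $\ell,\lambda_0',C'$ with
\[
\sup_{(x,z)\in S}\bigl(S^h_\Phi(x,z)-\PMoreau{\Phi}{\lambda}{h}(x,z)\bigr)\le \ell\sqrt{\lambda}\,C'\qquad(0<\lambda<\lambda_0'),
\]
the left-hand side being nonnegative since the Moreau envelope lies below $S^h_\Phi$. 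Taking $\lambda_0\le\lambda_0'$ with $\ell\sqrt{\lambda_0}\,C'\le\epsilon$, for every $\lambda<\lambda_0$, every $x\in C$ and every $z$ with $\|z\|\le R$ the constraint $\PMoreau{\Phi}{\lambda}{h}(x,z)\le h(x)$ forces $S^h_\Phi(x,z)\le h(x)+\epsilon$ (equivalently $S_\Phi(x,z)\le\epsilon$); that is, $(x,z)$ satisfies the $\epsilon$-relaxed constraint defining $\varphi^\epsilon$. Note that $\ell,\lambda_0',C'$, and hence $\lambda_0$, depend only on $S$ and $\epsilon$, never on $p$, which is what makes the final estimate uniform in $p$.

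\emph{Step 2 (tail cut-off).} Since $\xi$ admits a density, I would choose $R>0$ with $\mathbb{P}(\|\xi\|>R)<\eta$. Fix $\lambda\in(0,\lambda_0)$ and $x\in M_\lambda(p)\cap C$, and set $A:=\{\omega:\PMoreau{\Phi}{\lambda}{h}(x,\xi(\omega))\le h(x)\}$ and $B:=\{\omega:\|\xi(\omega)\|\le R\}$. By Step~1, $A\cap B\subset\{\omega:S^h_\Phi(x,\xi(\omega))\le h(x)+\epsilon\}$, so that
\[
\varphi^\epsilon(x)\ \ge\ \mathbb{P}(A\cap B)\ \ge\ \mathbb{P}(A)-\mathbb{P}(B^c)\ =\ \varphi_\lambda(x)-\mathbb{P}(\|\xi\|>R)\ >\ p-\eta .
\]
Hence $x\in M^\epsilon(p-\eta)$, which is exactly \eqref{Inclusion01}; as the bound is uniform over $x\in C$ and over $p\in\R$, this finishes the first claim.

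\emph{Step 3 (bounded support) and the main obstacle.} If $\operatorname{supp}f_\xi$ is bounded I would instead take $R$ so large that $\{z:\|z\|\le R\}\supseteq\operatorname{supp}f_\xi$; then $\mathbb{P}(B^c)=0$ and the same chain yields $\varphi^\epsilon(x)\ge\varphi_\lambda(x)\ge p$, i.e. $M^\epsilon(p)\supseteq M_\lambda(p)\cap C$ with no loss $\eta$. The one genuine obstacle is the mismatch flagged at the outset: Proposition~\ref{Uniform:aprox} is \emph{pointwise in $z$ on bounded sets}, whereas $\varphi_\lambda$ integrates over the unbounded variable $\xi$. The truncation of Step~2 is precisely what bridges this gap, and it is also where one sees transparently why the general case must pay the price $\eta$ (the uncontrolled tail) while the bounded-support case does not.
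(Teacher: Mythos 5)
Your proposal is correct and follows essentially the same argument as the paper's proof: use tightness of the law of $\xi$ to fix a radius $R$ with $\mathbb{P}(\|\xi\|>R)\leq\eta$, apply Proposition \ref{Uniform:aprox} on the bounded convex set $C\times\mathbb{B}_R$ to choose $\lambda_0$ (independently of $p$) so that the envelope error is below $\epsilon$, and then split the probability as $p\leq\varphi_\lambda(x)\leq\varphi^\epsilon(x)+\mathbb{P}(\|\xi\|>R)$, with the bounded-support case obtained by taking $R$ to cover the support. Your one deviation—applying the uniform approximation to $S^h_\Phi$ (the function whose envelope actually defines $\varphi_\lambda$ in \eqref{Proba:funct:lamb}) rather than to $S_\Phi$ as the paper literally writes—is a harmless, arguably cleaner, resolution of the paper's own notational ambiguity.
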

 
\begin{proof}
We recall that the probability measure induced by $\xi$ is Borellian and hence tight. Therefore, for any $\eta > 0$, we can find $r>0$ such that $\mathbb{P}( \| \xi \| >r) \leq \eta$. Let us define the set $K = C \times \mathbb{B}_r$.

Then, by Proposition \ref{Uniform:aprox}, we can find $\ell>0$ and $\lambda_1$ such that for all $\lambda \in (0,\lambda_1)$
\begin{align*}
\sup\limits_{(x,z) \in K} | \MoreauYosida{\Phi}{\lambda} (x,z) - S_\Phi(x,z) | \leq \ell \sqrt{\lambda }.
\end{align*}
Now, consider $\lambda_0< \lambda_1$ such that  $\ell \sqrt{\lambda_0 } <\epsilon$. So, for every $x\in C$, the following inclusion is valid:  
\begin{align*}
\{ z \in \mathbb{B}_r : \MoreauYosida{\Phi}{\lambda} (x,z) \leq h(x) \} \subset \{ z \in \mathbb{B}_r : S_\Phi (x,z) \leq h(x) + \epsilon \}.
\end{align*}
As a result, for $x \in M_{\lambda}(p) \cap C$, we have
\begin{align*}
p \leq \varphi_{\lambda}(x) &= \mathbb{P}( \MoreauYosida{\Phi}{\lambda} (x,\xi) \leq h(x) ) = \mathbb{P}(\norm{\xi}\leq r, \MoreauYosida{\Phi}{\lambda} (x,\xi) \leq h(x) ) + \mathbb{P}(\norm{\xi} >  r, \MoreauYosida{\Phi}{\lambda} (x,\xi) \leq h(x) ) \\
&\leq \mathbb{P}(\norm{\xi}\leq r, \MoreauYosida{\Phi}{\lambda} (x,\xi) \leq h(x) ) + \mathbb{P}(\norm{\xi} >  r) \leq \varphi^{\epsilon}(x) + \mathbb{P}(\norm{\xi} >  r).
\end{align*}
From this we can deduce $\varphi^{\epsilon}(x) \geq p - \eta$, i.e., $x \in M^\epsilon (p-\eta)$. If $f_{\xi}$ has bounded support, we may in particular find an appropriate $r$ when $\eta = 0$ is chosen, since then there is $r > 0$ such that 
$\mathbb{P}( \| \xi \| >r) = 0$. Then \eqref{Inclusion01} allows us to conclude.
\smartqed
\end{proof}
 
Now, we provide the main result of this section which establishes a relation between problems \eqref{Problem1} and \eqref{Problemlambda}.

\begin{theorem}\label{MainTheoremOPtPro}
 	Let $\psi:\mathcal{H}\to\Rx$ be a convex, coercive and lower semicontinuous function such that $M(p)\cap\dom \psi\not=\emptyset$. Then
 	
 	\begin{enumerate}[label=\alph*)]
 	   \item $v(P), v(P_\lambda)\in\R$ for all $\lambda>0$ and $v(P_\lambda)\to v(P)$.
 	   \item Let $\lambda_k \to 0^+$ and $(x_{\lambda_k})$ be  any sequence of optimal solutions for $(P_{\lambda_k})$, if $x_{\lambda_k}\rightharpoonup x_0$, then  $x_0$  is an optimum of $(P)$, provided that the function $h$ in \eqref{scalar01} is sequentially weakly continuous. If, furthermore, $\dom\psi=\mathcal{H}$ and $\psi^*$ is Fr\'echet differentiable on $\dom\partial\psi^*$, then $x_{\lambda_k}\rightarrow x_0$.x
 	\end{enumerate}
 \end{theorem}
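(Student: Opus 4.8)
The plan is to dispose of the finiteness claims and the easy inequality $v(P_\lambda)\le v(P)$ first, and then to prove the two convergence assertions, the whole difficulty being concentrated in the behaviour of the (weak) cluster points of the regularized minimizers.

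I would start from the structural facts that drive everything. By Theorem~\ref{var_convergence}~a) we have $\varphi_\lambda\ge\varphi$ pointwise, hence $M(p)\subseteq M_\lambda(p)$ for every $\lambda>0$; combined with $\MoreauYosida{\psi}{\lambda}\le\psi$ (Proposition~\ref{basicprop}~a)) this already gives $v(P_\lambda)=\inf_{M_\lambda(p)}\MoreauYosida{\psi}{\lambda}\le\inf_{M(p)}\psi=v(P)$. For the finiteness I would use that $\MoreauYosida{\psi}{\lambda}$ is finite-valued, convex, continuous and still coercive: since $(\MoreauYosida{\psi}{\lambda})^\ast=\psi^\ast+\tfrac{\lambda}{2}\norm{\cdot}^2$ has the same domain as $\psi^\ast$, the criterion $0\in\interior(\dom\psi^\ast)$ for coercivity is inherited. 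Because $\varphi$ and $\varphi_\lambda$ are sequentially weakly upper semicontinuous (Theorem~\ref{var_convergence}~d)), the feasible sets $M(p)$ and $M_\lambda(p)$ are weakly closed and nonempty, so both infima are attained on bounded sublevel sets by weak compactness; thus $v(P),v(P_\lambda)\in\R$.

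For $v(P_\lambda)\to v(P)$ I would first note the monotonicity: for $\lambda_1>\lambda_2$ one has $M_{\lambda_1}(p)\supseteq M_{\lambda_2}(p)$ and $\MoreauYosida{\psi}{\lambda_1}\le\MoreauYosida{\psi}{\lambda_2}$, so $\lambda\mapsto v(P_\lambda)$ is nondecreasing as $\lambda\searrow0$ and converges to some $L\le v(P)$. To obtain $L\ge v(P)$ I fix $\lambda_k\searrow0$ and minimizers $x_k$ of $(P_{\lambda_k})$; from $\MoreauYosida{\psi}{\lambda_0}(x_k)\le\MoreauYosida{\psi}{\lambda_k}(x_k)=v(P_{\lambda_k})\le v(P)$ for $\lambda_k<\lambda_0$ and coercivity of $\MoreauYosida{\psi}{\lambda_0}$, the sequence $(x_k)$ is bounded, so $x_k\rightharpoonup x_0$ along a subsequence. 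The weak lower epi-convergence of the envelope (Proposition~\ref{basicprop}~c)) yields $\psi(x_0)\le\liminf_k\MoreauYosida{\psi}{\lambda_k}(x_k)=L$, and, provided $x_0\in M(p)$, we conclude $v(P)\le\psi(x_0)\le L$. I expect this feasibility of the weak limit to be the main obstacle of part a): it is precisely a Mosco-type passage to the limit for the constraints, obtained from the weak upper bound $p\le\limsup_k\varphi_{\lambda_k}(x_k)\le\varphi(x_0)$ of Theorem~\ref{var_convergence}~b). In the general Hilbert setting this is where the weak continuity of $h$ does its work (cf.\ Proposition~\ref{mosco_convergence}~b)); when $\mathcal{H}$ is finite dimensional one may instead use strong cluster points and the Painlev\'e--Kuratowski convergence of Proposition~\ref{mosco_convergence}~a).

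The first half of part b) is then almost immediate: assuming $h$ weakly continuous and $x_{\lambda_k}\rightharpoonup x_0$, the weak form of Theorem~\ref{var_convergence}~b) gives $x_0\in M(p)$, while Proposition~\ref{basicprop}~c) together with part a) gives $\psi(x_0)\le\liminf_k\MoreauYosida{\psi}{\lambda_k}(x_{\lambda_k})=\lim_k v(P_{\lambda_k})=v(P)$; feasibility forces $\psi(x_0)\ge v(P)$, so $\psi(x_0)=v(P)$ and $x_0$ solves $(P)$. For the strong convergence under $\dom\psi=\mathcal{H}$ and Fr\'echet differentiability of $\psi^\ast$ on $\dom\partial\psi^\ast$, I would exploit the proximal decomposition. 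With $p_k:=\Prox{\psi}{\lambda_k}(x_{\lambda_k})$, the identity $\MoreauYosida{\psi}{\lambda_k}(x_{\lambda_k})=\psi(p_k)+\tfrac{1}{2\lambda_k}\norm{x_{\lambda_k}-p_k}^2$, the bound $\psi(p_k)\ge\inf_{\mathcal H}\psi$, and $\MoreauYosida{\psi}{\lambda_k}(x_{\lambda_k})\to v(P)$ force $\norm{x_{\lambda_k}-p_k}^2\le 2\lambda_k(v(P)-\inf_{\mathcal H}\psi+o(1))\to0$; hence $p_k\rightharpoonup x_0$ and $\limsup_k\psi(p_k)\le v(P)=\psi(x_0)$, so by weak lower semicontinuity $\psi(p_k)\to\psi(x_0)$. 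Picking $x^\ast\in\partial\psi(x_0)$ (nonempty, as $\psi$ is continuous), the estimate $\psi(p_k)-\langle x^\ast,p_k\rangle\to\psi(x_0)-\langle x^\ast,x_0\rangle=-\psi^\ast(x^\ast)$ shows $(p_k)$ is minimizing for $y\mapsto\psi(y)-\langle x^\ast,y\rangle$; since $x^\ast\in\dom\partial\psi^\ast$ and $\psi^\ast$ is Fr\'echet differentiable there, this tilted functional has a strong minimum at $x_0$, so $p_k\to x_0$ strongly and therefore $x_{\lambda_k}\to x_0$. The technical heart of this last step is the classical duality identifying Fr\'echet differentiability of $\psi^\ast$ with the strong-minimum property (in the sense defined earlier) of the tilted primal functional, which is exactly what upgrades weak to strong convergence.
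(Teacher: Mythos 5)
Your proposal is correct and follows essentially the same route as the paper's own proof: finiteness via coercivity of the Moreau envelope (the conjugate-domain criterion $\operatorname{int}\dom\psi^*=\operatorname{int}\dom(\MoreauYosida{\psi}{\lambda})^*$) together with weak closedness of the feasible sets and Weierstrass; value convergence via a weakly convergent (sub)sequence of regularized minimizers combined with Proposition \ref{basicprop} c) and the Mosco convergence of Proposition \ref{mosco_convergence}; and strong convergence in b) via the proximal decomposition $\|x_{\lambda_k}-\Prox{\psi}{\lambda_k}(x_{\lambda_k})\|\leq\sqrt{\lambda_k}C$ and the strong-minimum duality of \cite[Theorem 5.2.3]{BV2010}. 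The only noteworthy difference is that you argue part a) directly (using monotonicity of $\lambda\mapsto v(P_\lambda)$) rather than by contradiction, and you flag explicitly that feasibility of the weak cluster point requires the Mosco, not merely Painlev\'e--Kuratowski, convergence of the feasible sets---hence weak sequential continuity of $h$---a dependence the paper's proof of a) invokes silently when citing Proposition \ref{mosco_convergence}.
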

 
 \begin{proof}
 a) Let $\bar{x}\in M(p)\cap \dom\psi$ be given. By Item d) of Theorem \ref{var_convergence}, the set $M(p)$ is weakly closed, then the nonempty set $M:=M(p)\cap\{x\in\mathcal{H}:\psi(x)\leq \psi(\bar{x})\}$ is weakly compact. Since $\psi$ is weakly lower semicontinuous, by Weierstrass' theorem, $\psi$ has a minimizer in $M$. Therefore, $v(P)\in \R$. Similarly, since the Moreau envelope $\MoreauYosida{\psi}{\lambda}$ is convex, coercive ($0\in \text{int}\dom\psi^*=\text{int}\dom(\MoreauYosida{\psi}{\lambda})^*$), lower semicontinuous and $M(p)\cap \dom\psi\subset M_\lambda(p)\cap \dom\MoreauYosida{\psi}{\lambda}$ for all $\lambda$, we have, similarly, that $v(P_\lambda)\in\R$ for all $\lambda$.
 
To prove
\begin{align*}
   \liminf\limits_{ \lambda \to 0}v(P_\lambda)\geq v(P),
\end{align*} 
let us proceed by contradiction. That is, for some $\alpha<v(P)$, there is a subsequence $x_{\lambda_k}\in M_{\lambda_k}(p)$ with 
\begin{equation}\label{eq:subseqbounded1}
   \MoreauYosida{\psi}{\lambda_k}(x_{\lambda_k})\leq \alpha
\end{equation}
 for all $k$. Since, by Item a) of Proposition \ref{basicprop},
 $\MoreauYosida{\psi}{\lambda_1}(x_{\lambda_k}) \leq \MoreauYosida{\psi}{\lambda_{\lambda_k}}(x_{\lambda_k}) \leq \alpha$ for all $k$ and $\MoreauYosida{\psi}{\lambda_1}$ is coercive, convex and lower semicontinuous, there is a subsequence $x_{\lambda_{k_i}}\rightharpoonup x$. Indeed the level set of $\MoreauYosida{\psi}{\lambda_1}$ is bounded. 
 By \eqref{eq:subseqbounded1} and Item c) of Proposition \ref{basicprop} we get $\psi(x)\leq \alpha$, and on the other hand, by Proposition \ref{mosco_convergence}, $x\in M(p)$. Thus, $v(P)\leq \alpha$, which is a contradiction.

Now, to prove 
\begin{align*}
   \limsup\limits_{ \lambda \to 0}v(P_\lambda)\leq v(P),
 \end{align*}
notice that, by Item a) of Theorem \ref{var_convergence}, $M(p)\subset M_\lambda(p)$ for all $\lambda$ and since $\MoreauYosida{\psi}{\lambda}\leq \psi$ for all $\lambda$ we get $v(P_\lambda)\leq v(P)$ for all $\lambda$.

{b) Let us show that $x_0$ is an optimum of $(P)$. Indeed, by Proposition \ref{mosco_convergence}, we have that $x_0$ is a feasible point of problem $(P)$.  On the one hand, by Proposition \ref{basicprop} item a) and optimality of $x_{\lambda_k}$ we have
 \begin{equation}\label{eq:subseqbounded2}
   \MoreauYosida{\psi}{\lambda_k}(x_{\lambda_k}) \leq \MoreauYosida{\psi}{\lambda_k}(x_0) \leq \psi(x_0)
 \end{equation}
which implies that $\limsup \MoreauYosida{\psi}{\lambda_k}(x_{\lambda_k})  \leq \psi(x_0)$. On the other hand,  by Proposition \ref{basicprop} item c) and optimality of $x_{\lambda_k}$ we have
 \begin{equation}\label{eq:subseqbounded2}
 v(P_{\lambda_k})  = \lim_{k\to \infty}  v(P_{\lambda_k})   =\liminf_{k\to \infty} \MoreauYosida{\psi}{\lambda_k}(x_{\lambda_k}) \geq  \psi(x_0),
 \end{equation} 
 which shows the optimality of $x_0$ and that $\lim_{k\to \infty} \MoreauYosida{\psi}{\lambda_k}(x_{\lambda_k}) =  \psi(x_0)$. Now suppose, furthermore, that $\dom \psi=\mathcal{H}$ and $\psi^*$ is Fr\'echet differentiable on $\dom \partial \psi^*$. It follows that $\dom \partial \psi=\mathcal{H}$. Hence there exists $u\in\partial \psi(x_0)$, which implies $x_0=\nabla \psi^\ast(u)$. Particularly, due to \cite[Theorem 5.2.3]{BV2010}, the function   $\psi(\cdot ) - \langle u,  \cdot\rangle$  attains a strong minimum at $x_0$.  We claim that 
$$\psi(\hat{x}_{\lambda_k}) - \langle u, \hat{x}_{\lambda_k}\rangle\rightarrow \psi(x_0) - \langle u,x_0\rangle,$$
where $\hat{x}_{\lambda_k}=\Prox{\psi}{{\lambda_k}_k}(x_{{\lambda_k}_k})$. Indeed, since $\inf_{z \in \mathcal{H}} \psi(z)>-\infty$ as a result of $\psi$ being convex, coercive and l.s.c., and
\begin{equation*}
   \inf_{z \in \mathcal{H}} \psi(z) + \frac{1}{2{\lambda_k}}\|x_{\lambda_k}-\hat{x}_{\lambda_k}\|^2
   \leq 
   \psi(\hat{x}_{\lambda_k}) + \frac{1}{2{\lambda_k}}\|x_{\lambda_k}-\hat{x}_{\lambda_k}\|^2
   = \MoreauYosida{\psi}{{\lambda_k}}(x_{\lambda_k})\leq \psi(x_0),
\end{equation*}
 we have that
\begin{equation}\label{eq:boundsequenceproximals}
   \|x_{\lambda_k}-\hat{x}_{\lambda_k}\|\leq \sqrt{{\lambda_k}} C,
\end{equation}
for $C \geq \sqrt{(\psi(x_0) - \inf_{z \in \mathcal{H}} \psi(z))} \in \Re$. Thus also $\hat{x}_{\lambda_k}\rightharpoonup x_0$ and
\begin{align*}
   \psi(x_0)\leq & \liminf_{{\lambda_k}\rightarrow 0}\psi(\hat{x}_{\lambda_k})
   \leq \limsup_{{\lambda_k}\rightarrow 0}\psi(\hat{x}_{\lambda_k})\\ \leq& \limsup_{{\lambda_k}\rightarrow 0}\psi(\hat{x}_{\lambda_k})+ \frac{1}{2{\lambda_k}}\|x_{\lambda_k}-\hat{x}_{\lambda_k}\|^2=\limsup_{{\lambda_k}\rightarrow 0}\MoreauYosida{\psi}{{\lambda_k}}(x_{\lambda_k})=\psi(x_0),
\end{align*}
and together yields $\psi(\hat{x}_{\lambda_k}) - \langle u, \hat{x}_{\lambda_k}\rangle\rightarrow \psi(x_0) - \langle u,x_0\rangle$. Therefore, $\|\hat{x}_{\lambda_k}-x_0\|\rightarrow 0$ because $x_0$ is a strong minimum of $\psi(\cdot) - \langle u, \cdot \rangle$, so  by \eqref{eq:boundsequenceproximals}, we can then conclude $\|x_{\lambda_k}-x_0\|\rightarrow 0$.

 }
\smartqed
\end{proof}

Uniqueness of minimizer is intrinsically related with the convexity of the optimization problems. The following result provides conditions under the problems optimization problems \eqref{Problem1} and \eqref{Problemlambda} are convex and consequently all the assumptions of Theorem \ref{MainTheoremOPtPro} hold.

Let us recall that a nonnegative function $f$ defined on a convex set $D\subset \mathcal{H}$ is $\alpha$-concave, where $\alpha\in [-\infty, \infty]$, if for all $x,y\in D$ and all $\lambda\in[0,1]$ the following inequality holds:
 \begin{equation*}
   f(\lambda x+(1-\lambda)y)\geq m_{\alpha}(f(x),f(y),\lambda),
 \end{equation*}
 where $m_\alpha:\R_+\times\R_+\times [0,1]\rightarrow \R$ is defined as follows:
 \begin{equation*}
   m_\alpha(a,b,\lambda)=0\text{ if }ab=0, \;\text{and}\;  \alpha \leq 0
 \end{equation*}
 and for any other value of $a$ and $b$,
 \begin{equation*}
   m_\alpha(a,b,\lambda)=
   \left\{
   \begin{array}{cl}
    a^\lambda b^{1-\lambda} & \text{ if } \alpha=0,  \\
     \max\{a,b\} & \text{ if }\alpha=\infty,\\
     \min\{a,b\} & \text{ if }\alpha=-\infty,\\
    (\lambda a^\alpha+(1-\lambda)b^\alpha)^{1/\alpha} & \text{ otherwise. }
   \end{array}
   \right.
 \end{equation*}
 In the case $\alpha=0$, the function is called log-concave, for $\alpha=1$ concave, and $\alpha=-\infty$ quasi-concave. We also notice that all $\alpha$-concave
functions are quasi-concave. Moreover, the random vector $\xi$ has $\alpha$-concave   probability distribution if   the probability measure $\mathbb{P}_\xi (A) :=\mathbb{P}( \xi \in A) $ induced by $\xi$ on $\mathbb{R}^m$ satisfies that  for any Borel measurable sets $A, B \subseteq \mathbb{R}^m$  and for all $\lambda \in [0,1]$ 
\begin{align*}
    \mathbb{P}_\xi ( \lambda A + (1-\lambda)B) \geq m_\alpha( \mathbb{P}_\xi(A),\mathbb{P}_\xi(B), \lambda).
\end{align*}
 For more details and relations between the (generalized) concavity of random vectors and its density we refer to \cite{Shapiro2014}.
 \begin{corollary}\label{Corollary_mainresult}
	Let us suppose that $\xi$ has an $\alpha$-concave probability distribution and  $\Phi$ satisfies \eqref{scalar01} with $h=0$. Then, for every $\lambda>0$, and any $p \in (0,1)$ the functions $\varphi_\lambda$ and $\varphi$ are $\alpha$-concave on the sets $ \{ x\in \mathcal{H}: \exists z\in \R^m \text{ s.t } \MoreauYosida{\Phi}{\lambda}(x,z)\leq 0  \}$ and $ \{ x\in \mathcal{H}: \exists z\in \R^m \text{ s.t } \Phi(x,z)\leq 0  \}$, respectively. Consequently, for any $p\in (0,1]$ the sets $M_\lambda(p)$ and $M(p)$ are convex. Moreover, suppose that the objective function $\psi$ in the optimization problem \eqref{Problem1} is convex, coercive, lower semicontinuous,  $M(p)\cap\interior(\dom \psi)\not=\emptyset$ and $\psi^*$ is Fr\'echet differentiable on $\dom\partial\psi^*$. Then, the sequence of unique solutions of problems \eqref{Problemlambda} converge to the unique minimizer of \eqref{Problem1}.
 \end{corollary}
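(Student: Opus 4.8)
The plan is to turn both problems \eqref{Problem1} and \eqref{Problemlambda} into genuinely convex programs and then to invoke Theorem~\ref{MainTheoremOPtPro}: the convexity comes from an $\alpha$-concavity argument of Pr\'ekopa--Borell type, and the uniqueness of the minimizers from the differentiability of $\psi^*$.

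First I would prove that $\varphi$ and $\varphi_\lambda$ are $\alpha$-concave on the indicated domains. Since $h=0$, the supremum function $S_\Phi$ is jointly convex in $(x,z)$ by \eqref{scalar01}, and $\MoreauYosida{\Phi}{\lambda}=\MoreauYosida{S_\Phi}{\lambda}$ is convex as the Moreau envelope of a convex function. For any jointly convex $F\colon\mathcal{H}\times\mathbb{R}^m\to\R$, writing $A_F(x):=\{z\in\R^m:F(x,z)\leq 0\}$, joint convexity yields the key containment
\begin{equation*}
\lambda A_F(x_0)+(1-\lambda)A_F(x_1)\subseteq A_F\bigl(\lambda x_0+(1-\lambda)x_1\bigr),\qquad \lambda\in[0,1].
\end{equation*}
Applying this with $F=S_\Phi$ and $F=\MoreauYosida{\Phi}{\lambda}$, the monotonicity of $\mathbb{P}_\xi$ together with its $\alpha$-concavity gives
\begin{equation*}
\varphi\bigl(\lambda x_0+(1-\lambda)x_1\bigr)\geq \mathbb{P}_\xi\bigl(\lambda A_{S_\Phi}(x_0)+(1-\lambda)A_{S_\Phi}(x_1)\bigr)\geq m_\alpha\bigl(\varphi(x_0),\varphi(x_1),\lambda\bigr),
\end{equation*}
and analogously for $\varphi_\lambda$. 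This is exactly the classical mechanism behind the concavity of probability functions (cf.\ \cite{Shapiro2014}), so I would cite it rather than re-derive the measure inequality.

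Next I would pass to the level sets. Every $\alpha$-concave function is quasi-concave, hence has convex upper level sets relative to its domain of $\alpha$-concavity, which is convex because it is the image under the projection $(x,z)\mapsto x$ of a convex set. For $p\in(0,1]$, any $x$ with $\varphi(x)\geq p>0$ has $A_{S_\Phi}(x)$ of positive measure, hence nonempty, so $x$ lies in that domain; therefore $M(p)=\{x:\varphi(x)\geq p\}$ is convex, and likewise $M_\lambda(p)$. With convex feasible sets and convex objectives $\psi$ and $\MoreauYosida{\psi}{\lambda}$, the problems \eqref{Problem1} and \eqref{Problemlambda} are convex.

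Finally, I would settle uniqueness and conclude. Fr\'echet (hence G\^ateaux) differentiability of $\psi^*$ on $\dom\partial\psi^*$ makes $\partial\psi^*$ single-valued there, so $\partial\psi$ is injective and $\psi$ is strictly convex; moreover $(\MoreauYosida{\psi}{\lambda})^*=\psi^*+\tfrac{\lambda}{2}\|\cdot\|^2$ is differentiable on $\interior\dom\psi^*$, so each $\MoreauYosida{\psi}{\lambda}$ is strictly convex as well, whence \eqref{Problem1} and every \eqref{Problemlambda} have unique solutions $x_0$ and $x_\lambda$. The hypotheses of Theorem~\ref{MainTheoremOPtPro} hold ($h=0$ is weakly continuous and $M(p)\cap\dom\psi\neq\emptyset$), so part~a) yields $v(P_{\lambda_k})\to v(P)$; coercivity of $\MoreauYosida{\psi}{\lambda_1}$ applied to $\MoreauYosida{\psi}{\lambda_1}(x_{\lambda_k})\leq v(P_{\lambda_k})$ bounds $(x_{\lambda_k})$, which therefore clusters weakly, and part~b) identifies every weak cluster point with the optimum, which by uniqueness equals $x_0$; thus $x_{\lambda_k}\rightharpoonup x_0$. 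Here the interior condition $M(p)\cap\interior\dom\psi\neq\emptyset$ plays the role of a constraint qualification ensuring $\partial\psi(x_0)\neq\emptyset$, which is precisely what the strong-minimum argument of Theorem~\ref{MainTheoremOPtPro}~b) requires (in place of $\dom\psi=\mathcal{H}$) to upgrade $x_{\lambda_k}\rightharpoonup x_0$ to $x_{\lambda_k}\to x_0$. I expect the only delicate points to be the $\alpha$-concavity bookkeeping---keeping $M(p)$ and $M_\lambda(p)$ inside the convex domain where quasi-concavity is valid, which uses positivity of the probability---and the transfer of strict convexity from $\psi$ to its Moreau envelopes; the convergence itself is then a direct appeal to the main theorem.
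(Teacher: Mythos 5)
Your proposal is correct and takes essentially the same route as the paper's proof: the $\alpha$-concavity of $\varphi$ and $\varphi_\lambda$ via the set-inclusion/measure mechanism (the paper simply cites \cite[Theorem 4.39]{Shapiro2014} for exactly this), quasi-concavity giving convexity of $M(p)$ and $M_\lambda(p)$, uniqueness of minimizers from the Fr\'echet differentiability of $\psi^*$, and convergence from Theorem \ref{MainTheoremOPtPro}, with the condition $M(p)\cap\interior(\dom\psi)\neq\emptyset$ acting (through the subdifferential sum rule) as the constraint qualification guaranteeing $\partial\psi(x_0)\neq\emptyset$, which is precisely how the paper replaces the hypothesis $\dom\psi=\mathcal{H}$ of Theorem \ref{MainTheoremOPtPro}~b). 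One minor imprecision: differentiability of $\psi^*$ on $\dom\partial\psi^*$ gives strict convexity of $\psi$ only on convex subsets of $\dom\partial\psi$ (essential strict convexity), not globally as you claim -- but this suffices, since the constraint qualification places the (convex) optimal solution set inside $\dom\partial\psi$, which is exactly how the paper argues.
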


\begin{proof}
The $\alpha$-concavity of the functions $\varphi$ and $\varphi_\lambda$ follows from a direct application of 	  \cite[Theorem 4.39, p. 108]{Shapiro2014}. In particular, $\varphi$ and $\varphi_\lambda$ are quasi-concave, hence the sets $M(p)$ and $M_\lambda(p)$, being upper level sets of these functions, are convex. Now suppose that the objective function $\psi$ in the optimization problem \eqref{Problem1} is convex, coercive, lower semicontinuous and $M(p)\cap\interior(\dom \psi)\not=\emptyset$. Then Item a) of Theorem \ref{MainTheoremOPtPro} follows and by \cite[Corollary 16.38]{Combettes2017} we have
\begin{equation*}
  0\in \partial(\psi+\delta_{M(p)})(x_0)=\partial \psi(x_0)+\partial \delta_{M(p)}(x_0),
\end{equation*}
where $\delta_{M(p)}$ is the indicator function of $M(p)$ and $x_0$ is an optimal solution of \eqref{Problem1}. Thus $\partial \psi(x_0)\not=\emptyset$, so the set of optimal solutions of \eqref{Problem1} is a convex subset of $\dom \partial \psi$. By the differentiability assumption over $\psi^*$, the function $\psi$ must be
strictly convex on this set (see, e.g.,  \cite[section 7.3]{BV2010}). Then  \eqref{Problem1} has a unique optimal solution. Similarly, since the objective functions $\MoreauYosida{\psi}{\lambda}$ satisfy the same hypothesis as $\psi$, the problems \eqref{Problemlambda} also have unique optimal solutions. Then, the convergence of optimal solutions  follows from Theorem \ref{MainTheoremOPtPro}.  
\smartqed
\end{proof}
 
\begin{remark}[On extensions]

A  possible extension results when weakening to what is called ``eventual convexity", e.g., \cite{Henrion_Strugarek_2008,vanAckooij_2013,vanAckooij_Malick_2017,vanAckooij_Laguel_Malick_Matiussi-Ramalho_2022}. In this case it could suffice for  $\xi$ follows  an elliptically symmetric distribution. Convexity could only be asserted when $p$ is sufficiently large, but this is usually not a problem in applications. Moreover, it would be necessary to analyse the generalized concavity of the mapping $\rho_{\lambda}$ defined in \eqref{defradialfunction} and likewise for the mapping $\rho$. We leave this for future research. 
\end{remark}

\section{Examples and applications }
\label{sec:examples}

In this section, we review some examples of the potential applications of our results. Formally, we discuss how our approach can be used to rewrite several classes of probability functions arising in (nonsmooth) optimizing models, and consequently, it illustrates the versatility of our research. Our first examples will demonstrate the smoothing effect of the suggested regularization. Then we will examine the situation of a so called ``joint chance constraint". The section will end with the investigation of a situation wherein $\K$ is the cone of positive definite matrices as well as the case wherein $\K$ describes infinitely many inequalities. In each situation we will carefully investigate how \eqref{scalar01} can be concretely shown to hold true.

\subsection{Nonsmooth inequality constraint}
 First, we start our analysis considering a probability function given by a nonsmooth single inequality, that is, 
  \begin{equation*} 
 	\varphi(x) := \mathbb{P}\left(  g (x,\xi ) \leq 0 \right).
 \end{equation*}
 where $g:\mathcal{H}\times \R^m \to \R$ is a (possible nonsmooth) function. It is clear that in that case the cone $\mathcal{K}$  in consideration is given by the set of nonnegative real numbers, the generator of the positive polar cone is nothing more that the singleton   $\mathcal{C}=\{ 1\}$, and our function $\Phi$ is nothing more than the same function $g$. Moreover, in this setting assumption \eqref{scalar01} is equivalent to the existence of a continuously differentiable function $h$ such that $(x,z) \to g(x,z) +h(x) $ is convex. For simplicity, in the following two examples we chose $h(x)=0$ for all $x\in \mathcal{H}$.
 
 \begin{example}
 Let $\xi \sim \mathcal{N}(0,1)$ and consider the nonsmooth function $g:\R\times\mathbb{R}\rightarrow \mathbb{R}$ given by
 \begin{equation*}
   g(x,z)=2f_1(x)+f_2(z)-5,
\end{equation*}
 where $f_1(x)=\max(|x|-1,0)$ and
 \begin{align*}
  	f_2(z) = \left\{  \begin{array}{cl}
  	z^2 & \text{ if } z\geq 0\\
  	  	-z & \text{ otherwise.}
  	\end{array}  \right.
\end{align*}
 The probability function
 \begin{equation*}
   \varphi(x) = \mathbb{P}\left(2f_1(x)-5\leq \xi\leq\sqrt{ -2f_1(x)+5} \right)
 \end{equation*}
 is not differentiable at $\bar{x}=1,-1$. Indeed, the left derivative of $\varphi$ at $\bar{x}=1$ is $\varphi_-'(1)=0$ and the right derivative of $\varphi$ at $\bar{x}=1$ is
 \begin{equation*}
   \varphi_+'(1)=-\tfrac{1}{\sqrt{2\pi}}\left[\tfrac{1}{\sqrt{5}}{\rm exp}(-5/2)+2\rm{exp}(-25/2)\right]<0.
 \end{equation*}
 Similarly, $\varphi$ is not differentiable at $\bar{x}=-1$. Given $\lambda>0$, we have
 \begin{equation*}
   \varphi_\lambda(x) := \mathbb{P}\left(\MoreauYosida{f_2}{\lambda}(\xi) \leq -2\MoreauYosida{f_1}{2\lambda}(x)+5 \right),
 \end{equation*}
where
\begin{align*}
  	\MoreauYosida{f_1}{\lambda}(x) = \left\{  \begin{array}{cl}
  	  f_1(x)	 & \text{ if } |x|\leq 1  \\
  	  |x|-\frac{\lambda}{2}-1 & \text{ if } |x|\geq \lambda+1 \\
  	  \frac{1}{2\lambda}(|x|-1)^2 & \text{ otherwise} 
  	\end{array}  \right.
\end{align*}
and
\begin{align*}
  	\MoreauYosida{f_2}{\lambda}(\xi) = \left\{  \begin{array}{cl}
  	  	-\xi-\frac{\lambda}{2}  & \text{ if } \xi\leq\lambda \\
  	  \frac{1}{2\lambda+1}\xi^2 & \text{ if } \xi\geq 0\\
  	  \frac{1}{2\lambda}\xi^2 & \text{ otherwise.} 
  	\end{array}  \right.
\end{align*}

Figure \ref{figure1a} illustrates the graph of the functions $\varphi_\lambda$ for $\lambda\in \{0, 0.03,0.1,0.3\}$ where $\varphi_0:=\varphi$ and Figure \ref{figure1b} illustrates a zoomed version for $\lambda\in\{0,0.0001, 0.0005\}$ where we can clearly see the smoothness of the regularized probability function $\varphi_\lambda$ at $\bar{x}=-1$.

\begin{figure}
    \begin{minipage}[b]{0.45\textwidth}
    \includegraphics[width=3.2in]{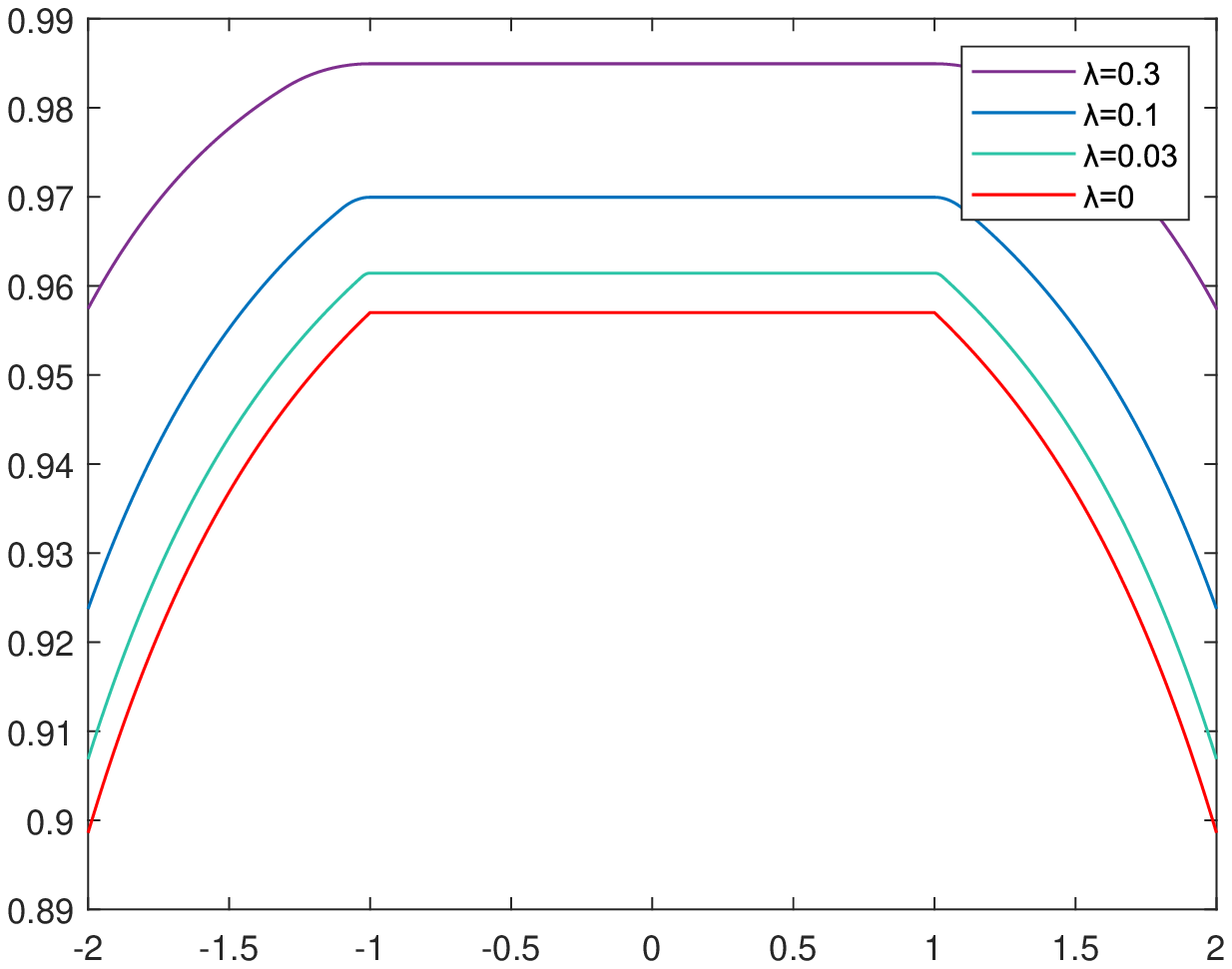}
    \caption{Graph of $\varphi_\lambda$ for $\lambda\in\{0, 0.03,0.1,0.3\}$}
    \label{figure1a}
    \end{minipage}
    \hspace{0.8cm}
    \begin{minipage}[b]{0.45\textwidth}
    \includegraphics[width=3.2in]{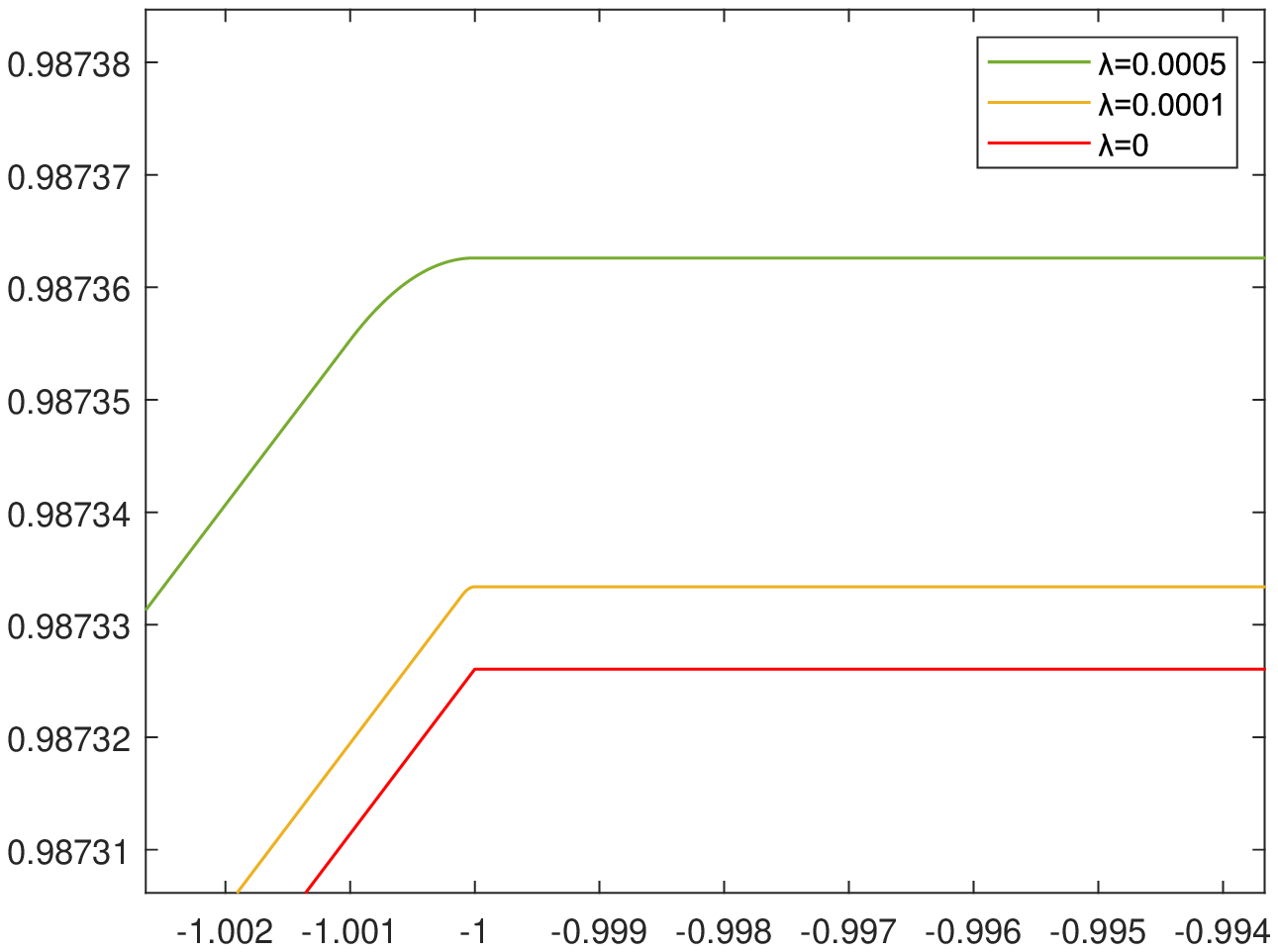}
    \caption{Graph of $\varphi_\lambda$ for  $\lambda\in\{0,0.0001, 0.0005\}$}
    \label{figure1b}
    \end{minipage}
\end{figure}
\end{example}
 
 \begin{example}
 Let $\xi_1,\xi_2 \sim \mathcal{N}(0,1)$ and consider the nonsmooth function $g:\R^2\times\mathbb{R}^2\rightarrow \mathbb{R}$ given by $g(x_1,x_2,z_1,z_2)=f(x_1,x_2)+|z_1|+z_2-3$ where $f(x_1,x_2)=\max(\sqrt{x_1^2+x_2^2}-2,0)$.
 The function
 \begin{equation*}
   \varphi(x_1,x_2) = \mathbb{P}\left(\xi_2\leq -f(x_1,x_2)-|\xi_1|+3\right)
 \end{equation*}
 does not have a directional derivative at $(2,0)$ in the direction $(x_1,x_2)=(1,0)$ since the left derivative is
 \begin{equation*}
   \varphi_-'(2,0):=\lim_{t\to0-}\frac{\varphi(2+t,0)-\varphi(2,0)}{t}=0
 \end{equation*}
and the right derivative is
\begin{equation*}
   \varphi_+'(2,0):=\lim_{t\to0+}\frac{\varphi(2+t,0)-\varphi(2,0)}{t}=-\frac{1}{2\pi}\int_{-\infty}^{\infty}{\rm exp}(-z_1^2+3|z_1|-\tfrac{9}{2})dz_1<0.
 \end{equation*}
 Given $\lambda>0$, we have
 \begin{equation*}
   \varphi_\lambda(x_1,x_2)=\mathbb{P}\left(\xi_2\leq -\MoreauYosida{f}{\lambda}(x_1,x_2)-\MoreauYosida{|\xi_1|}{\lambda}+3\right)
 \end{equation*}
 where
\begin{align*}
  	\MoreauYosida{f}{\lambda}(x_1,x_2) = \left\{  \begin{array}{cl}
  	  f(x_1,x_2)	 & \text{ if } \sqrt{x_1^2+x_2^2}\leq 2  \\
  	  \sqrt{x_1^2+x_2^2}-\frac{\lambda}{2}-2 & \text{ if } \sqrt{x_1^2+x_2^2}\geq \lambda+2 \\
  	  \frac{1}{2\lambda}(\sqrt{x_1^2+x_2^2}-2)^2 & \text{ otherwise} 
  	\end{array}  \right.
\end{align*}
and
\begin{align*}
  	\MoreauYosida{|\xi_1|}{\lambda} = \left\{  \begin{array}{cl}
  	  	\frac{1}{2\lambda}|\xi_1|^2  & \text{ if } |\xi_1|\leq\lambda \\
  	  |\xi_1|-\frac{\lambda}{2} & \text{ otherwise.}
  	\end{array}  \right..
\end{align*}
Figure \ref{figure2} illustrates the nonsmoothness of $\varphi$ on $\{(x_1,x_2):x_1^2+x_2^2=2\}$.

\begin{figure}[b]
\centering
\includegraphics[width=2.5in]{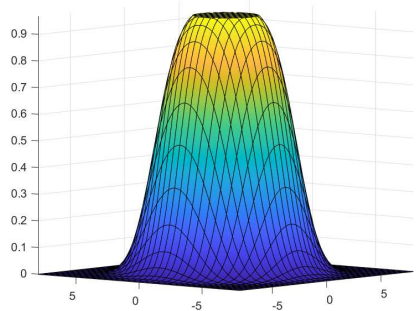}
\caption{Graph of $\varphi$}
\label{figure2}
\end{figure}

 \end{example}
 
\subsection{Joint Chance constraint}
 
Let us consider a family of functions $g_i : \mathcal{H}\times  \mathbb{R}^m \to \R$ with $i=1, \ldots,s$ and the probability function
\begin{equation}\label{Joint}
 	\varphi(x) := \mathbb{P}\left(  g_i(x,\xi ) \leq 0, \text{ for all } i=1, \ldots,s  \right).
\end{equation}
Then, considering $\Phi :\mathcal{H}\times  \mathbb{R}^m \to \R^s$ given by 
\begin{equation}\label{vectorPhi}
 \Phi (x,z):= \begin{pmatrix}
 	g_1(x,z) \\ \vdots\\ g_s(x,z)
 \end{pmatrix}
\end{equation} 
 and the cone $\mathcal{K}:=\R^s_+$, the probability function in \eqref{Joint} can be written as $\varphi(x)=\mathbb{P}( \Phi(x,\xi)\in -\mathcal{K})$, which places us in the framework of \eqref{Proba:funct}. It is easy to see that for a given function $h:\mathcal{H}\to\R$, and considering the unit simplex $\C:=\{x\in\R^s_+:\sum_{i=1}^s x_i=1\}$, effectively ``generating" the positive polar cone of $\K$, we have that $S_\Phi^h(x,z)= \max_{ i=1,\ldots, s} g_i(x,z) + h(x)$. Furthermore, the next proposition gives us a simple characterization of the condition \eqref{scalar01} in terms of the nominal data $g_i$.
\begin{proposition}
 Let $g_i: \mathcal{H}\times \R^m \to \R$ be a family of functions for $i=1,\ldots, s$ and consider the vector valued function $\Phi$ given in \eqref{vectorPhi}. Then the following are equivalent
  \begin{enumerate}[label=\alph*)]
  \item There exists a continuously differentiable convex function $h :\mathcal{H}\to \R$ such that $\Phi$ satisfies \eqref{scalar01}.
\item For every $i=1,\ldots,s$ there exists a continuously differentiable convex function $h_i :\mathcal{H}\to \R$ such that $(x,z) \to g_i(x,z) +h_i(x)$ is convex.
  \end{enumerate}
\end{proposition}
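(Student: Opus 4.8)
The plan is to prove the two implications separately, exploiting the fact that for the joint case the supremum function has the explicit form $S_\Phi^h(x,z)=\max_{i=1,\ldots,s} g_i(x,z)+h(x)$, established just before the statement.

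\emph{From b) to a).} Suppose for each $i$ there is a continuously differentiable convex $h_i$ making $(x,z)\mapsto g_i(x,z)+h_i(x)$ convex. The natural candidate is $h:=\sum_{i=1}^s h_i$, which is continuously differentiable and convex as a finite sum of such functions. First I would rewrite, for each fixed $i$,
\begin{equation*}
g_i(x,z)+h(x)=\bigl(g_i(x,z)+h_i(x)\bigr)+\sum_{j\neq i}h_j(x).
\end{equation*}
The first bracket is convex by hypothesis, and the tail $\sum_{j\neq i}h_j$ is convex in $(x,z)$ (it does not depend on $z$), so each $g_i(\cdot,\cdot)+h(\cdot)$ is convex. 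Recalling that in the joint setting $\C$ is the unit simplex, an arbitrary scalarization $\Phi_{v^\ast}^h(x,z)=\sum_{i=1}^s v_i^\ast\bigl(g_i(x,z)\bigr)+h(x)$ with $v^\ast=(v_1^\ast,\ldots,v_s^\ast)$, $v_i^\ast\geq 0$, $\sum_i v_i^\ast=1$. I would then write this as the convex combination $\sum_{i=1}^s v_i^\ast\bigl(g_i(x,z)+h(x)\bigr)$, which is a convex combination of convex functions and hence convex, giving exactly \eqref{scalar01}.

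\emph{From a) to b).} Conversely, assume \eqref{scalar01} holds with some continuously differentiable convex $h$. Since the simplex vertices $e_i$ (the $i$-th canonical basis vector) lie in $\C$, taking $v^\ast=e_i$ shows that $(x,z)\mapsto\langle e_i,\Phi\rangle(x,z)+h(x)=g_i(x,z)+h(x)$ is convex. Hence one may simply take $h_i:=h$ for every $i$, and b) follows with a single common $h_i$. This direction is essentially immediate once one observes that the canonical basis vectors are feasible scalarization weights.

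The only point requiring a small verification is that the canonical vectors $e_i$ genuinely belong to (or can be used in testing) the set $\C$; in the problem's convention $\C$ is the unit simplex generating $\K^+=\R^s_+$, so $e_i\in\C$ directly. The main (mild) obstacle is therefore bookkeeping in the $b)\Rightarrow a)$ direction: one must confirm that replacing the individual regularizers $h_i$ by their sum $h$ preserves convexity of \emph{every} scalarization simultaneously, not merely of the single-index functions; the displayed splitting above handles this by isolating the one index that is ``corrected'' by its own $h_i$ and absorbing the remaining $h_j$ as a $z$-independent convex term. No deeper analytic input is needed, since convexity in $(x,z)$ is preserved under nonnegative combinations and the addition of functions depending only on $x$.
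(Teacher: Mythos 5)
Your proof is correct and follows essentially the same route as the paper's: the direction a)~$\Rightarrow$~b) uses the test vectors $e_i\in\C$ exactly as in the paper, and for b)~$\Rightarrow$~a) you make the same choice $h:=\sum_{i=1}^s h_i$ and express the scalarization as a nonnegative combination of the convex functions $g_i+h_i$ and $h_j$ (the paper writes this in one step as $\sum_{i=1}^s\bigl(v_i^\ast(g_i+h_i)+(1-v_i^\ast)h_i\bigr)$, while you split it into ``each $g_i+h$ is convex'' followed by a convex combination using $\sum_i v_i^\ast=1$, which is the same computation reorganized).
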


\begin{proof}
To prove a) implies b) consider $w^\ast=e_i$ in \eqref{scalar01} where $e_i$ is the $i$-th standard basic vector of $\R^s$. To prove the converse, let $w^\ast\in \mathcal{C}$ and set $h(x):=\sum_{i=1}^sh_i(x)$. Then since $0\leq w^\ast_i\leq 1$ and the functions $g_i(x,z)+h_i(x)$ and $h_i(x)$ are convex we have that
\begin{equation*}
  	\langle w^\ast,\Phi \rangle (x,z) + h(x) =\sum_{i=1}^s w^\ast_i (g_i(x,z)+h_i(x))+(1-w^\ast_i)h_i(x)
\end{equation*}
is convex.
\smartqed
\end{proof}

 The final example in this subsection illustrates the convergence of the solution and minimizers established in Corollary \ref{Corollary_mainresult}.
 
\begin{example}[Illustrative example]
Let $\xi_1,\xi_2 \sim \mathcal{N}(0,1)$ and consider problem \eqref{Problem1} of section \ref{sec:consistency} with $p=0.95$ and the nonsmooth functions $\psi,\varphi: \R^2\to\R$ given by
\begin{equation*}
\begin{aligned}
  \psi(x_1,x_2)&=|x_1-5|+\tfrac{1}{2}x_2^2+x_2+8\\
  \varphi(x_1,x_2)&= \mathbb{P}( \sqrt{x_1^2+x_2^2}+ |\xi_1 | + \xi_2 \leq 5,\text{ and }  |\xi_1 | + \xi_2 \leq 3)
\end{aligned}
\end{equation*}
In this case, we can consider the vector valued function $\Phi : \R^2 \times \R^2 \to \R^2 $ given by 
\begin{align*}
   \Phi(x_1,x_2,z_1,z_2) = \begin{pmatrix}
   \sqrt{x_1^2+x_2^2}+ |\xi_1 | + \xi_2 -5\\
  |\xi_1 | + \xi_2 - 3
   \end{pmatrix}
\end{align*}
Then, the probability function can be recast as $$\varphi(x_1,x_2)=\mathbb{P}\left( \Phi(x_1,x_2,\xi_1,\xi_2) \in -\mathbb{R}^2_+  \right).$$
In Table \ref{table1} we give the optimal values and the minimizers of \eqref{Problemlambda} associated with problem \eqref{Problem1}. 

\begin{table}[h!!!]
\centering
\begin{tabular}[t]{lcc}
\hline
$\lambda$ & $v(P_\lambda)$ & $x_\lambda$\\
\hline
1& 8.19472 & (2.96739,-1.19475)\\
0.1&10.19347& (2.21702,-0.73521)\\
0.01&10.39840& (2.13857,-0.68601)\\
0.001&10.41892& (2.13071,-0.68105)\\
0.0001&10.42098& (2.12992,-0.68055)\\
0.00001&10.42118& (2.12985,-0.68050)\\
\hline
&$v(P)=$10.42121& $x_0=$(2.12984,-0.68049)\\
\end{tabular}
\caption{\label{table1}Results obtained by MatLab's optimization algorithm {\tt fmincon}.}
\end{table}
\end{example}

\subsection{semidefinite chance constraint}

In this section, we consider the following probability function  
\begin{align}\label{semidefinite}
\varphi(x):=  \mathbb{P}\left( \Phi(x,\xi)  \preceq  0 \right),
\end{align}
where $\Phi: \mathcal{H}\times \mathbb{R}^s \to \mathcal{S}^p$ is a function with $\mathcal{S}^p$   the set of $p\times p$ symmetric matrices, and the symbol $A  \preceq 0$ means that the matrix $A $ is negative semidefinite. It is important to notice that the probability function \eqref{semidefinite} appears as a  natural alternative to deal with semidefinite mathematical programs where there exists a random inflow in the model. 

It is well known that the partial order $\preceq $ can be characterized by the cone of negative definite matrices $\mathcal{S}_{-}^p$. Let us recall that the space $\mathcal{S}^p$ is a Hilbert space endowed with the inner product $\langle A, B \rangle :=\Tr(AB)$, where $\Tr$ represents the trace  operator (see, e.g., \cite{MR1756264}). Using this topological structure, the positive polar cone of $\mathcal{S}_{-}^p$ is given by the set of positive definite symmetric matrices $\mathcal{S}_{+}^p$. It is straightforward to see that the set  $\mathcal{C}:=\{ A \in \mathcal{S}_{+}^p : \Tr(A) =  1 \}$ generates the cone  $\mathcal{S}_{+}^p$. Furthermore, in order to fulfill \eqref{scalar01}, we need to assume an appropriate notion of convexity for this precise setting. The following result establish an equivalent characterization of \eqref{scalar01} through simpler quadratic scalarizations.

\begin{proposition}\label{Proposit}
Let  $\Phi: \mathcal{H}\times \mathbb{R}^m \to \mathcal{S}^p  $ be a function and $h: \mathcal{H}\to  \mathbb{R}$ be a convex and continuously differentiable function. Then, the following are equivalent:
\begin{enumerate}[label=\alph*)]
  		\item For every $A\in \mathcal{C}= \{  A \in \mathcal{S}_{+}^p : \Tr(A) =  1  \}$ the function  $(x,z) \to  \langle A, \Phi(x,z) \rangle + h(x)$ is convex.
  		\item  For every $v\in \mathbb{S}^{p-1}$  the function $(x,z) \to  v^\top \Phi(x,z) v + h(x)$ is convex.
\end{enumerate}
\end{proposition}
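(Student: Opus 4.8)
The plan is to recognize condition b) as precisely the restriction of condition a) to rank-one matrices, and then to recover the full condition a) from b) by means of the spectral decomposition together with the normalization $\Tr(A)=1$ built into the definition of $\C$.

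First I would record the elementary identity linking the two families of scalarizations. For any $v\in\mathbb{S}^{p-1}$ the rank-one matrix $vv^\top$ is symmetric and positive semidefinite, and $\Tr(vv^\top)=v^\top v=1$, so that $vv^\top\in\C$. Moreover, for every $M\in\mathcal{S}^p$,
$$\langle vv^\top, M\rangle = \Tr(vv^\top M)=v^\top M v.$$
Applying this with $M=\Phi(x,z)$ shows that $(x,z)\mapsto v^\top\Phi(x,z)v+h(x)$ is exactly the scalarization in a) associated with the admissible matrix $A=vv^\top$. Hence a)$\Rightarrow$b) is immediate.

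For the converse b)$\Rightarrow$a), I would fix $A\in\C$ and invoke the spectral theorem to write $A=\sum_{i=1}^p\lambda_i\,u_iu_i^\top$, where $u_1,\dots,u_p$ is an orthonormal basis of eigenvectors (so $u_i\in\mathbb{S}^{p-1}$) and $\lambda_i\geq 0$ are the eigenvalues. Since $\Tr(u_iu_i^\top)=1$, the normalization $\Tr(A)=1$ forces $\sum_{i=1}^p\lambda_i=1$, so the $\lambda_i$ are convex weights. Using the identity above termwise and writing $h(x)=\sum_{i=1}^p\lambda_i\,h(x)$, I obtain
$$\langle A,\Phi(x,z)\rangle+h(x)=\sum_{i=1}^p\lambda_i\bigl(u_i^\top\Phi(x,z)u_i+h(x)\bigr).$$
Each summand $(x,z)\mapsto u_i^\top\Phi(x,z)u_i+h(x)$ is convex by b), and a nonnegative combination of convex functions is convex; therefore the left-hand side is convex in $(x,z)$, which is exactly a).

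There is no genuine obstacle here: the substance is simply that $\C$ is the convex hull of the rank-one projections $\{vv^\top:v\in\mathbb{S}^{p-1}\}$, which are its extreme points, so convexity imposed on this generating family propagates to all of $\C$. The only point requiring slight care is the redistribution of $h$, which is legitimate precisely because the spectral weights $\lambda_i$ sum to one; this in turn is guaranteed by the trace-one normalization defining $\C$.
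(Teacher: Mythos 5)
Your proof is correct and follows essentially the same route as the paper's: the direction a)\,$\Rightarrow$\,b) via the rank-one matrices $vv^\top\in\C$ with $\langle vv^\top,M\rangle=v^\top M v$, and the converse via the spectral decomposition $A=\sum_i\lambda_i u_iu_i^\top$ together with the observation that $\Tr(A)=1$ makes the eigenvalues convex weights, allowing $h$ to be distributed across the summands. No gaps; the argument matches the paper's proof step for step.
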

 
\proof{Proof.}
On the one hand,  let us suppose that $a)$ holds, and consider a vector $v\in  \mathbb{S}^{p-1}$, that is  ${v}\in \mathbb{R}^p$ with $\| v\|=1$, then let us define the  symmetric  matrix $A:=v v^\top$, which has $\Tr(A)= \| v\|^2=1$. Moreover, the matrix $A$ is positive semidefinite as is clear. 

Finally, $\langle A, \Phi(x,z) \rangle  =  v^\top \Phi(x,z) v$, which shows that the function $x\to v^\top \Phi(x,z) v +h(x)$ is convex and that hence b) holds true.
  	
On the other hand, let us assume that $b)$ holds, and consider $A \in \mathcal{S}^p_+ $ with  $\Tr(A)=1$. Using the   spectral decomposition we have that   the matrix $A$ can be decomposed into  $A= PDP^\top = \sum_{i=1}^p \lambda_i(A) v_i v_i^\top$, where $P$ is a   $p\times p$ orthogonal matrix, and its  columns are the vector $v_i \in\mathbb{R}^p$ with $\| v_i\|=1$, and  $D$ is a  diagonal  given by the eigenvalues  of the matrix $A$, denoted by $ \lambda_1(A),\ldots,  \lambda_p(A)$, allowing for multiplicity. Then, we can compute the inner product of this matrix and $\Phi(x,z)$ by
\begin{align*}
	\langle A, \Phi(x,z) \rangle= \sum\limits_{i=1}^p \lambda_i(A) \langle v_i v_i^\top , \Phi(x,z) \rangle  = \sum\limits_{i=1}^p \lambda_i(A)  v_i^\top  \Phi(x,z)v_i.
\end{align*}
Finally, since $\sum_{i=1}^p \lambda_i(A)  = \Tr(A)=1$ and $\lambda_i(A) \geq 0$, we get that  	$$\langle A, \Phi(x) \rangle + h(x)  = \sum_{ i=1 }^p \lambda_i (A)  \left(  v_i^\top  \Phi(x)v_i+ h(x)    \right) ,$$
 consequently the above function is convex, and that concludes the proof. 
\endproof

\begin{remark}[Matrix convexity]
It is important to mention that Proposition \ref{Proposit} establishes that our desired assumptions hold under the so-called matrix convexity, that is, the assumption that for   every $v\in  \mathbb{S}^{p-1}$  the function $(x,z) \to  v^\top \Phi(x,z) v$ is convex. We refer to \cite[Section 5.3.2]{MR1756264} for more details and references of these properties.
\end{remark}

\begin{example}
Let us consider a family of matrices $A_i, B_j \in \mathcal{S}^p$ for $i=1, \ldots, s$, $j=0,\ldots,m$ and $C^2$ functions $g_i :\mathbb{R}^s \to \mathbb{R}$ for $i=1,\ldots,s$. Define the mapping $\Phi : \mathbb{R}^s \times \mathbb{R}^m \to \mathcal{S}^p $ given by 
$$\Phi(x,z) := \sum_{i=1}^s g_i(x)A_i + \sum_{j=1}^m z_i B_i + B_0.$$

For $i=1,\ldots, s$, consider a convex and continuously differentiable function $h_i$ such that $\pm g_i(x) +h_i(x) $ are convex. 

Let $C>0$ be a  constant greater than any of the absolute values of the eigenvalues of the matrices $A_i$. Then, defining $h := C \sum_{i=1}^s h_i$, we have that for any $v \in \mathbb{S}^{p-1}$, we have that 
\begin{align*}
v^\top \Phi(x,z) v + h(x) =&  \sum_{i=1}^s\left(  v^\top A_i v  g_i(x) + | v^\top A_i v|h_i(x)  ) + (C -| v^\top A_i v| ) h_i(x) \right)\\
& + \sum_{j=1}^m z_i   v^\top B_i v  +  v^\top B_0 v ,
\end{align*} 
is a convex function, which due to Proposition \ref{Proposit} shows that the mapping $\Phi$ satisfies  \eqref{scalar01}.
\end{example}
 
\subsection{Probabilistic/Robust (Probust) Chance Constraint}
Let us consider a compact Hausdorff space $T$ and a  function $g : T \times \mathcal{H}\times  \mathbb{R}^m \to \R$ such that $t\to g(t,x,z)$ is continuous for all  $(x,z) \in \mathcal{H}\times \mathbb{R}^m$. Consider the probability function

\begin{equation}\label{probabilityprobust}
	\varphi(x) := \mathbb{P}\left(  g_t(x,\xi ) \leq 0, \text{ for all } t\in T  \right).
 \end{equation}
Then, let us define $\Phi\colon \mathcal{H}\times  \mathbb{R}^m \to {C}(T)$ given by 
\begin{align}\label{definitionPhiConti}
(x,z) \to \Phi(x,z) \in C(T) \text{ defined by } t\to \Phi (x,z)(t):= g(t,x,z),
\end{align}
where $C(T)$ is the space of continuous functions from $T$ to $\R$ and considering the closed convex cone $\mathcal{K}:=\left\{ f \in C(T) : f(t) \geq 0 \text{ for all }t\in T \right\}$. Using this setting,  we have that the probability function \eqref{probabilityprobust} can be expressed as \eqref{Proba:funct}, that is, $\varphi(x)=\mathbb{P}( \Phi(x,\xi) \in -\mathcal{K})$.

Now, we are going to write the probability function \eqref{probabilityprobust} using a suitable cone $\mathcal{C}$, which generates the positive polar cone of $\mathcal{K}$. In order to do that let us recall some concepts of measure theory. Let us denote by $\mathcal{B}(T)$ the Borel $\sigma$-algebra, which is the smallest $\sigma$-algebra generated by open sets, a signed measure $\mu:  \mathcal{B}(T) \to \R$ is called regular if for every $A\in \mathcal{B}(T)$
\begin{equation*}
\begin{aligned}
   \mu(A)&=\inf\left\{  \mu(U) :U \text{ is open and } A \subset U\right\} = \sup\left\{  \mu(F) : F \text{ is closed and } F \subset A\right\}.
\end{aligned}
\end{equation*}

By Riez' representation theorem (see, e.g., \cite[Theorem 14.14]{MR2378491}) the dual space of $C(T)$ can be identified as the linear space of regular signed measures. Moreover, in this framework the positive polar cone of the set of positive functions is given by the set of (positive) regular measures $\mu:  \mathcal{B}(T) \to \R$ (see, e.g., \cite[Theorem 14.12]{MR2378491}). Consequently a suitable generator of that cone corresponds to the set $\mathcal{C}$ of probability measures on $(T,\mathcal{B}(T))$, which means that our supremum function is given by  
\begin{equation}\label{supfunctionScontinuousfunctions}
  S_\Phi^h(x,z) =\sup \left\{\int_T g(t,x,z)d\mu(t)+h(x):\mu\in \C \right\}.
\end{equation}
The next proposition establishes formally that the general supremum function provided in \eqref{supfunctionScontinuousfunctions} of the vector function \eqref{definitionPhiConti} with scalarization over the set of probability measure is indeed nothing more than the pointwise supremum of the function $g$ with respect to the parameter $t\in T$  plus the function $h$.

\begin{proposition}\label{prop65}
Let $T$ be a compact Hausdorff space and $g : T \times \mathcal{H}\times  \mathbb{R}^m \to \R$ be such that $t\to g(t,x,z)$ is continuous for all  $(x,z) \in \mathcal{H}\times \mathbb{R}^m$. Then, for a given function $h : \mathcal{H}\to \R$ the following holds true:
 \begin{align*}
S_\Phi^h(x,z) = \sup_{t\in T} g(t,x,z) + h(x) \text{ for all } (x,z) \in \mathcal{H}\times \mathbb{R}^m 
 \end{align*}
\end{proposition}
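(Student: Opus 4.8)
The plan is to fix an arbitrary $(x,z) \in \mathcal{H} \times \mathbb{R}^m$, reduce the claim to a statement about a single continuous function, and then exploit the two canonical extreme probability measures. Since $h(x)$ appears additively and depends neither on the integrating measure $\mu$ nor on $t$, I may pull it out of both suprema and reduce the assertion to
\begin{equation*}
\sup_{\mu \in \C} \int_T g(t,x,z) \, d\mu(t) = \sup_{t \in T} g(t,x,z).
\end{equation*}
Writing $f := g(\cdot,x,z)$, the continuity hypothesis gives $f \in C(T)$, and compactness of $T$ guarantees that $f$ attains its maximum at some $t^\ast \in T$, so the right-hand side equals $f(t^\ast)$.

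For the inequality ``$\leq$'', I would use that every $\mu \in \C$ is a probability measure, so that $\mu(T) = 1$ and
\begin{equation*}
\int_T f(t) \, d\mu(t) \leq \int_T f(t^\ast) \, d\mu(t) = f(t^\ast);
\end{equation*}
taking the supremum over $\mu \in \C$ preserves this bound. For the reverse inequality ``$\geq$'', the key observation is that the Dirac mass $\delta_{t^\ast}$ concentrated at the maximizer is itself a regular Borel probability measure, hence an admissible element of $\C$, and satisfies $\int_T f \, d\delta_{t^\ast} = f(t^\ast)$. This exhibits $f(t^\ast)$ as a particular value attained by the left-hand supremum, so $\sup_{\mu \in \C} \int_T f \, d\mu \geq f(t^\ast)$. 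Combining the two inequalities yields the reduced identity, and restoring the additive term $h(x)$ on both sides finishes the argument.

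I do not anticipate a serious obstacle: the argument is elementary once the extremal measures (arbitrary $\mu$ for the upper bound, a Dirac mass for the lower bound) are identified. The only points demanding a little care are to confirm that $\delta_{t^\ast}$ genuinely lies in $\C$ — that is, that it is a regular probability measure on the compact Hausdorff space $T$, which is standard — and to invoke compactness of $T$ together with continuity of $f$ so that the supremum over $T$ is actually attained. Should one wish to avoid attainment, the lower bound can instead be obtained by selecting a maximizing sequence $t_n$ with $f(t_n) \to \sup_{t} f(t)$ and testing against the Dirac masses $\delta_{t_n}$, but compactness makes this detour unnecessary here.
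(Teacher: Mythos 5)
Your proof is correct and follows essentially the same route as the paper's: both establish the upper bound by integrating against an arbitrary probability measure in $\C$, and both obtain the reverse inequality by taking a maximizer $\bar{t}$ of the continuous function $t \mapsto g(t,x,z)$ on the compact set $T$ and testing against the Dirac measure concentrated there. Your write-up is slightly more explicit about pulling out $h(x)$ and verifying that the Dirac mass belongs to $\C$, but the substance is identical.
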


\begin{proof}
Defining  $\mathcal{C}$ of probability measures on $(T,\mathcal{B}(T))$. Fisrt, we have that
\begin{equation}\label{boundSPhi}
  S_\Phi^h(x,z)\leq \sup_{t\in T} g(t,x,z) + h(x)
\end{equation}
for all $(x,z) \in \mathcal{H}\times \mathbb{R}^m$. Moreover, given a point $(x,z) \in \mathcal{H}\times \mathbb{R}^m$, we can take  $\bar{t}\in T$ (since $T$ is a compact Hausdorff space) such that $g(\bar{t},x,z)=\sup_{t\in T} g(t,x,z)$, then if we considering  the  Dirac measure over $\bar{t}$, that is, 
\begin{equation*}
   \mu_{\{\bar{t}\}}(A)=\left\{
   \begin{array}{cl}
    1 & \text{ if } \bar{t} \in A  \\
    0 & \text{ otherwise.}
   \end{array}
   \right.
\end{equation*}
we obtain the equality in \eqref{boundSPhi}, which ends the proof.
\smartqed
\end{proof}

The final result of this section shows a sufficient condition to ensure condition \eqref{scalar01} in the setting of probust chance constrained optimization.
\begin{proposition} \label{prop66}
Let $T$ be a compact Hausdorff space and $g : T \times \mathcal{H}\times  \mathbb{R}^m \to \R$ be such that $t\to g(t,x,z)$ is continuous for all $(x,z) \in \mathcal{H}\times \mathbb{R}^m$. Suppose the existence of a convex continuously differentiable function $h: \mathcal{H}\to \R$, such that for all $t\in T$, the function $ (x,z) \to g(t,x,z) + h(x) $ is convex. Then, the function $\Phi$ defined in \eqref{definitionPhiConti} satisfies condition \eqref{scalar01}.
\end{proposition}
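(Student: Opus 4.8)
The plan is to verify \eqref{scalar01} directly, keeping the very function $h$ provided by the hypothesis and exploiting the identification of $\mathcal{C}$ with the set of (Borel) probability measures on $T$ established before Proposition \ref{prop65}. First I would fix an arbitrary element $v^\ast = \mu \in \mathcal{C}$. By the Riesz representation, the scalarization associated with $\mu$ and the function $\Phi$ of \eqref{definitionPhiConti} is $\langle \mu, \Phi\rangle(x,z) = \int_T g(t,x,z)\, d\mu(t)$, and this integral is finite for every $(x,z)$ because $t\mapsto g(t,x,z)$ is continuous on the compact space $T$, hence bounded and $\mu$-integrable.

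The crucial observation is that, since $\mu(T) = 1$, the $t$-independent term $h(x)$ can be pulled inside the integral,
\begin{equation*}
\Phi_\mu^h(x,z) = \int_T g(t,x,z)\, d\mu(t) + h(x) = \int_T \big( g(t,x,z) + h(x)\big)\, d\mu(t),
\end{equation*}
so that the integrand is, for each fixed $t$, exactly the map $(x,z)\mapsto g(t,x,z)+h(x)$, which is convex by assumption. I would then invoke the standard fact that a $\mu$-average of convex functions is convex: for $(x_1,z_1),(x_2,z_2)\in\mathcal{H}\times\mathbb{R}^m$ and $\alpha\in[0,1]$, inserting the pointwise convexity inequality under the integral sign and using the monotonicity and linearity of integration against the positive measure $\mu$ yields
\begin{equation*}
\Phi_\mu^h\big(\alpha(x_1,z_1)+(1-\alpha)(x_2,z_2)\big)\leq \alpha\,\Phi_\mu^h(x_1,z_1)+(1-\alpha)\,\Phi_\mu^h(x_2,z_2).
\end{equation*}
As $\mu\in\mathcal{C}$ was arbitrary and $h$ is convex and continuously differentiable by hypothesis, condition \eqref{scalar01} follows.

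I expect no genuine obstacle here; the argument is essentially routine. The only point deserving care is the bookkeeping that permits reusing the single convexifier $h$ for every scalarizing measure simultaneously, which works precisely because integrating the constant-in-$t$ quantity $h(x)$ against a probability measure returns $h(x)$ unchanged. Well-definedness of all integrals is guaranteed by continuity of $g(\cdot,x,z)$ together with compactness of $T$, so no integrability or measurability issue arises.
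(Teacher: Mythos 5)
Your proposal is correct and follows essentially the same route as the paper's own proof: both pull the term $h(x)$ inside the integral against the probability measure $\mu$ (using $\mu(T)=1$) to write $\Phi_\mu^h(x,z)=\int_T\bigl(g(t,x,z)+h(x)\bigr)\,d\mu(t)$, and then conclude by the fact that convexity is preserved under integration against a positive measure. Your version is in fact slightly more careful than the paper's, which loosely says "positive regular measures" where the normalization $\mu(T)=1$ is actually used, and you additionally note the (routine) integrability coming from continuity of $g(\cdot,x,z)$ on the compact set $T$.
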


\begin{proof}
For all positive regular measures $\mu:\mathcal{B}(T)\to\R$ we have that \eqref{scalar01} is given by
\begin{equation*}
\langle \mu,\Phi \rangle (x,z) + h(x)=\int_T (g( t,x,z)+h(x))d\mu(t) \text{ for all }(x,z) \in \mathcal{H}\times \mathbb{R}^m,
\end{equation*}
thus its convexity follows from the convexity of the function $g( t,x,z)+h(x)$ for all $(x,z) \in \mathcal{H}\times \mathbb{R}^m$, which is preserved under the integral sign (see, e.g., \cite{Correa2019,MR3947674} and the references therin for more details).
\smartqed
\end{proof}

The final example of this section illustrate an example of a PDE chance constraned optimization problem. 

\begin{example}[PDE chance constrained optimization] 
Let $k\in \mathbb{N}$ and consider the Sobolev space $\mathcal{H}:=W^{2,k}(\mathbb{R}^s)$, where $k>s/2$. By the Sobolev embedding theorem, any $f\in \mathcal{H}$ is H\"{o}lder continuous on $\mathbb{R}^s$. Let $\theta\colon \mathbb{R}^s\to \mathbb{R}$ be a differentiable convex function such that 
$$
\lim_{\Vert y \Vert \to +\infty}\frac{\theta(y)}{\Vert y\Vert}=+\infty.
$$
For $f\in \mathcal{H}$, let us consider the Hamilton-Jacobi equation
\begin{equation}\label{Problem-EDP}
	\begin{aligned}
		\frac{\partial u}{\partial t}(t,x) &+ \theta^{\ast} (\nabla_x u(t,x)) =0 & x\in \mathbb{R}^s, t>0,\\
		u(0,x)&=f(x) + \langle c^\ast, \xi \rangle &x\in \mathbb{R}^s.
	\end{aligned}
\end{equation}
Here $\langle c^\ast, \xi \rangle$ represents a canonical perturbation of a random inflow $\xi$. It is well-known (see, e.g., \cite{MR1736971,MR1757236}) that  problem \eqref{Problem-EDP} admits a unique continuous viscosity solution given by the Hopf-Lax formula
\begin{align}\label{formula_sol}
u_{\xi}(t,x)=\inf\limits_{ y\in  \mathbb{R}^s} \left( f(x-y) + t \theta (y/t)  \right) + \langle c^\ast, \xi \rangle,
\end{align} 
 with the convention $0 \cdot \theta (y/0)= 0 $.
 
Now, given a compact set $ T \subseteq [0,+\infty) \times \mathbb{R}^s$, let us assume that the modeller is expected to minimize a cost functional over the space of functions $\mathcal{H}$ such that the solution \eqref{formula_sol} is greater than or equal to a given level $\alpha$ over the whole set $T$ with high probability level. Formally, it arises to the the following optimization problem
\begin{equation}\label{opt_PDE}
    \begin{aligned}
\min \psi (f) \\ 
\varphi(f) \geq p\\ \quad f\in \mathcal{H}
\end{aligned}
\end{equation}
where $\varphi(f):=\mathbb{P} \left( u_{\xi}(t,x) \geq \alpha ,\text{ for all } (t,x) \in T    \right) $ and $u$ is the unique solution of \eqref{Problem-EDP} for data $f$ given by  the formula \eqref{formula_sol}. 
 
In order to translate the optimization problem \eqref{opt_PDE} into our setting, we set  $g: T\times \mathbb{R}^s \times \mathcal{H}\to\mathbb{R}$, given by    $g(t,x,f,z):=\alpha -  u_z(t,z) $, where  again $u$ is the unique solution of \eqref{Problem-EDP} for data $f$. It is straightforward to show for fixed $(t,x)$ the function $(f,z) \to g(t,x,f,z)$ is convex, and for fixed $(f,z)$ the function $(t,x)\to g(t,x,f,z)$ is continuous. This, by Propositions \ref{prop65} and \ref{prop66} allows us to use the results provided in this paper.
\end{example}

\section{Conclusion}

In this paper we have suggested a regularization \eqref{Proba:funct:lamb} of the probability function given in \eqref{Proba:funct} employing the Moreau envelope. We have shown that this regularization inherits properties of the Moreau envelope itself, namely convergence to the original probability function. Under appropriate, yet mild conditions, convergence can be understood in the Painlevé-Kuratowski or Mosco sense. Furthermore, in a finite dimensional setting, we established continuous differentiability of the regularized probability functions and asymptotic consistency of the resulting gradients. Once again in infinite dimensions, we managed to establish convergence of approximated optimization problems to original problems. 

The above results provide a general approach to handling several nonsmooth chance-constrained optimization problems. We illustrated the possible applications of our inner Moreau regularization covering examples from (nonsmooth) single random inequality to more challenging probabilistic robust optimization models arising from PDE chance-constrained problems. 

Furthermore, the abstract initial conic formulation allows representing general inequality systems inside the probability function, for example, semidefinite constraints. 

It is expected that our convergence results established in Section \ref{sec:consistency} provide the first steps in the development of general algorithms for solving probabilistic constraint programming problems. Besides, the available gradient formula given in Theorem \ref{Theo:form:gradient} provides a suitable representation of the gradient to implement (nonlinear) first decent methods, which be explored in a future research project.

\begin{acknowledgements}
P. P\'erez-Aros was partially supported by ANID-Chile grant: Fondecyt Regular 1200283 and Fondecyt Regular 1190110. C. Soto was supported by the National Agency for Research and Development (ANID)/Scholarship Program/Doctorado Nacional Chile/2017-21170428. E. Vilches was supported by Centro de Modelamiento Matem\'atico
(CMM), ACE210010 and FB210005, BASAL
funds for center of excellence and Fondecyt Regular 1200283 from ANID-Chile. 
\end{acknowledgements}

\bibliographystyle{plain}
\bibliography{references}

 \end{document}